\newcommand{\quot}[2]{#1/#2}
\newcommand{\quotpi}[1]{[#1]}
\newcommand{\moment}{{\mathfrak m}}
\newcommand{\noment}{{\mathfrak n}}
\newcommand{\dn}{\square}
\newcommand{\dd}{\lozenge}
\theoremstyle{plain}
\NewDocumentCommand{\bc}{}{{\text{\normalfont\fullmoon}}}
\newcommand{\lb}{\langle}
\newcommand{\rb}{\rangle}
\newcommand\loz{\stackMath\mathbin{\stackinset{c}{0ex}{c}{0ex}{\ast}{\lozenge}}}
\newcommand{\gog}{\mathfrak S}
\title{Dynamic Cantor Derivative Logic} 
\begin{document}

\author[D.~Fern\'andez-Duque]{David Fern\'andez-Duque\lmcsorcid{0000-0001-8604-4183}}[a]
\author[Y.~Montacute]{Yo\`av Montacute\lmcsorcid{0000-0001-9814-7323}}[b] 


\address{University of Barcelona, Spain}
\email{fernandez-duque@ub.edu}  
\address{University of Cambridge, United Kingdom}	
\email{yoav.montacute@cl.cam.ac.uk}  


\begin{abstract}
Topological semantics for modal logic based on the Cantor derivative operator gives rise to derivative logics, also referred to as $d$-logics.
Unlike logics based on the topological closure operator, $d$-logics have not previously been studied in the framework of dynamical systems, which are pairs $(X,f)$ consisting of a topological space $X$ equipped with a continuous function $f\colon X\to X$.

We introduce the logics $\bf{wK4C}$, $\bf{K4C}$ and $\bf{GLC}$ and show that they all have the finite Kripke model property and are sound and complete with respect to the $d$-semantics in this dynamical setting. In particular, we prove that $\bf{wK4C}$ is the $d$-logic of all dynamic topological systems, $\bf{K4C}$ is the $d$-logic of all $T_D$ dynamic topological systems, and $\bf{GLC}$ is the $d$-logic of all dynamic topological systems based on a scattered space.
We also prove a general result for the case where $f$ is a homeomorphism, which in particular yields soundness and completeness for the corresponding systems $\bf{wK4H}$, $\bf{K4H}$ and $\bf{GLH}$.

The main contribution of this work is the foundation of a general proof method for finite model property and completeness of dynamic topological $d$-logics. Furthermore, our result for $\bf{GLC}$ constitutes the first step towards a proof of completeness for the trimodal topo-temporal language with respect to a finite axiomatisation -- something known to be impossible over the class of all spaces.
\end{abstract}

\maketitle

\section{Introduction}

Dynamic (topological) systems are mathematical models of processes that may be iterated indefinitely.
Formally, they are defined as pairs $\lb\mathfrak X,f\rb$ consisting of a topological space $\mathfrak{X}=\langle X,\tau\rangle$ and a continuous function $f\colon X\to X$; the intuition is that points in the space $\mathfrak X$ `move' along their orbit, $x,f(x),f^2(x),\ldots$ which usually simulates changes in time.
{\em Dynamic topological logic} ($\bf DTL$) combines modal logic and its topological semantics with linear temporal logic (see Pnueli \cite{ltl}) in order to reason about dynamical systems in a decidable framework.

Due to their rather broad definition, dynamical systems are routinely used in many pure and applied sciences, including computer science.
To cite some recent examples, in data-driven dynamical systems, data-related problems may be solved through data-oriented research in dynamical systems as suggested by Brunton and Kutz \cite{brunton2019data}. 
Weinan~\cite{weinan2017proposal} proposes a dynamic theoretic approach to machine learning where dynamical systems are used to model nonlinear functions employed in machine learning. 
Lin and Antsaklis's~\cite{hybrid} hybrid dynamical systems have been at the centre of research in control theory,  artificial intelligence and computer-aided verification. 
Mortveit and Reidys's~\cite{sequential} sequential dynamical systems generalise aspects of systems such as cellular automata, and also provide a framework through which we can study dynamical processes in graphs.
Another example of dynamical systems and computer science can be found in the form of linear dynamical systems, i.e.\ systems with dynamics given by a linear transformation. 
Examples of such systems in computer science include Markov chains, linear recurrence sequences and linear differential equations. 
Moreover, there are known strong connections between dynamical systems and algorithms. This may be found for example in the work of Hanrot, Pujol and Stehl\'e~\cite{hanrot2011analyzing}, and in the work of Chu~\cite{chu2008linear}.

Such applications raise a need for effective formal reasoning about topological dynamics.
Here, we may take a cue from modal logic and its topological semantics.
The study of the latter dates back to McKinsey and Tarski~\cite{Tarski}, who proved that the modal logic $\bf S4$ is complete for a wide class of spaces, including the real line.
Artemov, Davoren and Nerode~\cite{artemov} extended $\bf S4$ with a `next' operator in the spirit of $\bf LTL$, producing the logic $\bold{S4C}$. 
They proved that this logic is sound and complete with respect to the class of all dynamic topological systems.
The system $\bold{S4C}$ was enriched with the `henceforth' tense by Kremer and Mints, who dubbed the new logic {\em dynamic topological logic} ($\bold{DTL}$). 
Later, Konev et al.\ \cite{konev} showed that $\bold{DTL}$ is undecidable, and Fernández-Duque~\cite{david} showed that it is not finitely axiomatisable on the class of all dynamic topological systems.

The aforementioned work on dynamic topological logic interprets the modal operator $\lozenge$ as a closure operator.
However, McKinsey and Tarski had already contemplated semantics that are instead based on the Cantor derivative \cite{Tarski}: the {\em Cantor derivative} of a set $A$, usually denoted by $d(A)$, is the set of points $x$ such that $x$ is in the closure of $A\setminus \{x\}$ (see Section~\ref{secPrel}).
This interpretation is often called {\em $d$-semantics} and the resulting logics are called {\em $d$-logics.}
These logics were first studied in detail by Esakia, who showed that the $d$-logic $\bold{wK4}$ is sound and complete with respect to the class of all topological spaces \cite{Esakia2}.
It is well-known that semantics based on the Cantor derivative are more expressive than semantics based on the topological closure.
For example, consider the property of a space $\mathfrak X$ being {\em dense-in-itself}, meaning that $\mathfrak X$ has no isolated points (see Section~\ref{secTangle}).
The property of being dense-in-itself cannot be expressed in terms of the closure operator, but it {\em can} be expressed in topological $d$-semantics by the formula $\lozenge\top$.

Logics based on the Cantor derivative appear to be a natural choice for reasoning about dynamical systems. However, there are no established results of completeness for such logics in the setting of dynamical systems, i.e.\ when a topological space is equipped with a continuous function.
Our goal is to prove the finite Kripke model property, completeness and decidability of logics with the Cantor derivative operator and the `next' operator $\bc$ over some prominent classes of dynamical systems: namely, those based on arbitrary spaces, on $T_D$ spaces (spaces validating the $4$ axiom $\square p \to \square\square p$) and on {\em scattered spaces} (see Section~\ref{secTangle} for definitions).
The reason for considering scattered spaces is to circumvent the lack of finite axiomatisability of $\bold{DTL}$ by restricting to a suitable subclass of all dynamical systems.
In the study of dynamical systems and topological modal logic, one often works with {\em dense-in-themselves} spaces.
This is a sensible consideration when modelling physical spaces, as Euclidean spaces are dense-in-themselves.
However, as we will see in Section~\ref{secTangle}, some technical issues that arise when studying $\bold{DTL}$ over the class of all spaces disappear when restricting our attention to scattered spaces, which in contrast have many isolated points.
Further, we consider dynamical systems where $f$ is a {\em homeomorphism,} i.e.\ where $f^{-1}$ is also a continuous function.
Such dynamical systems are called {\em invertible}. 

The basic dynamic $d$-logic we consider is $\bold{wK4C}$, which consists of $\bold{wK4}$ and the temporal axioms for the continuous function $f$.
In addition, we investigate two extensions of $\bold{wK4C}$:  $\bold{K4C}$ and $\bold{GLC}$.
As we will see, $\bold{K4C}$ is the $d$-logic of all $T_D$ dynamical systems, and $\bold{GLC}$ is the $d$-logic of all dynamical systems based on a scattered space.
Unlike the generic logic of the trimodal topo-temporal language $\mathcal{L}^{\circ*}_\lozenge$, we conjecture that a complete finite axiomatisation for $\bold{GLC}$, extended with axioms for the `henceforth' operator, will not require changes to the trimodal language.
This logic is of special interest to us as it would allow for the first finite axiomatisation and completeness results for a logic based on the trimodal topo-temporal language.

\subparagraph*{Outline.} This paper is structured as follows: in Section 2 we give the required definitions and notations necessary to understand the paper. 
In Section 3 we provide some background on prior work on the topic of dynamic topological logics. Moreover, we motivate our interest in $\bold{GLC}$, the most unusual logic we work with.

In Section 4 we present the canonical model, and in Section 5 we construct a `finitary' accessibility relation on it.
Both are then used in Section 6 in order to develop a proof technique that, given the right modifications, would work for many $d$-logics above $\bold{wK4C}$. 
In particular, we use it to prove the finite model property, soundness and completeness for the $d$-logics $\bold{wK4C}$, $\bold{K4C}$ and $\bold{GLC}$, with respect to the appropriate classes of Kripke models.

In Section 7 we prove topological $d$-completeness of $\bold{K4C}$, $\bold{wK4C}$ and $\bold{GLC}$ with respect to the appropriate classes of dynamical systems.
In Section 8 we present logics for systems with homeomorphisms and provide a general completeness result which, in particular, applies to the $d$-logics $\bold{wK4H}$, $\bold{K4H}$ and $\bold{GLH}$.
Finally, in Section 9 we provide some concluding remarks.

\section{Preliminaries}\label{secPrel}
In this section we review some basic notions required for understanding this paper.
We work with the general setting of {\em derivative spaces}, in order to unify the topological and Kripke semantics of our logics.

\begin{defi} [topological space]
A topological space is a pair $\mathfrak {X}=\langle X,\tau\rangle$, where $X$ is a set and $\tau$ is a subset of $\wp(X)$ that satisfies the following conditions:
\begin{itemize}
\item $X,\varnothing\in\tau$;
\item if $U,V\in\tau$, then $U\cap V\in\tau$;
\item if $\mathcal{U}\subseteq \tau$, then $\bigcup\mathcal{U}\in\tau$.
\end{itemize}

The elements of $\tau$ are called open sets, and we say that $\tau$ forms a topology on $X$.  A complement of an open set is called a \emph{closed} set. 
\end{defi}
\color{black}

The main operation on topological spaces we are interested in is the {\em Cantor derivative.}

\begin{defi}[Cantor derivative] Let $\mathfrak {X}=\langle X,\tau\rangle$ be a topological space. Given $S\subseteq X$, the \emph{Cantor derivative} of $S$ is the set $d(S)$ of all limit points of $S$, i.e.\ $x\in  d(S) \iff \forall U\in \tau $ s.t. $ x\in U,\; (U\cap S)\backslash\{x\}\neq\varnothing$.
We may write $d(S)$ or $dS$ indistinctly.
\end{defi}

Given subsets $A,B\subseteq X$, it is not difficult to verify that $d $ satisfies the following properties:
\begin{enumerate}
\item\label{itDOne} $d(\varnothing)=\varnothing$;
\item\label{itDTwo} $d(A\cup B)= d(A)\cup d(B)$;
\item\label{itDThree} $dd(A) \subseteq A\cup d(A)$.
\end{enumerate}
\noindent In fact, these conditions lead to a more general notion of {\em derivative spaces:}\footnote{Derivative spaces are a special case of {\em derivative algebras} introduced by Esakia \cite{EsakiaAlgebra}, where $\wp(X)$ is replaced by an arbitrary Boolean algebra.
}

\begin{defi}[derivative space]
A {\em derivative space} is a pair $\lb X,\rho\rb$, where $X$ is a set and $\rho\colon \wp(X) \to \wp(X)$ is a map satisfying properties~\ref{itDOne}-\ref{itDThree} (with $\rho$ in place of $d$).
\end{defi} 

When working with more than one topological space, we will often denote the map $\rho$ on the topological space $\langle X,\tau\rangle$ by $\rho_\tau $. The intended derivative spaces discussed in this paper are of the form $\lb X,d_\tau\rb$.
However, there are other examples of derivative spaces.
The standard topological closure may be defined by $c(A) = A\cup d(A)$.
Then, $\lb X,c\rb$ is also a derivative space, which satisfies the additional property $A\subseteq c(A)$ (and, {\em a fortiori,} $cc(A) = c(A)$), which together form Kuratowski's axioms; we call such derivative spaces {\em closure spaces.}
Similarly to the Cantor derivative, we will often denote the closure of the topological space $\langle X,\tau\rangle$ by $c_\tau$.
Then, a set $A$ is closed iff $c_\tau(A) \subseteq A$~\cite{Munkres}.

Note that if $\rho=d_\tau$, then the topology $\tau$ is uniquely determined, but not ever derivative operator is of the form $d_\tau$.
In particular, weakly transitive Kripke frames provide examples of derivative spaces which do not necessarily arise from a topology.
For the sake of succinctness, we call these frames {\em derivative frames.}

\begin{defi}[derivative frame]
A {\em derivative frame} is a pair $\mathfrak  F = \langle W,\sqsubset \rangle$ where $W$ is a non-empty set and $\sqsubset $ is a {\em weakly transitive} relation on $W$, meaning that $w\sqsubset  v\sqsubset  u$ implies that $w\sqsubseteq u$, where $\sqsubseteq$ is the reflexive closure of $\sqsubset $.
\end{defi}
Below and throughout the text, we write $\exists x\sqsupset y \ \varphi $ instead of $\exists x (y\sqsubset x \wedge \varphi) $, and adopt a similar convention for the universal quantifier and other relational symbols.
We chose the notation $\sqsubset $ because it is suggestive of a transitive relation, but remains ambiguous regarding reflexivity, as there may be irreflexive and reflexive points.
Note that $\sqsubset$ is weakly transitive iff $\sqsubseteq$ is transitive. Given $A\subseteq W$, we define ${\downarrow_\sqsubset}$ as a map
${\downarrow_\sqsubset} \colon \wp(W) \to \wp(W)$ such that
$${\downarrow_\sqsubset}(A)= \{w\in W: \exists v\sqsupset w (v\in A)\}.$$
Similarly, we define
$${\uparrow_\sqsubset}(A)= \{w\in W: \exists v\sqsubset w (v\in A)\}.$$
The following is then readily verified:

\begin{lem}
If $\langle W,\sqsubset \rangle$ is a derivative frame, then $\langle W,{\downarrow_\sqsubset} \rangle$ is a derivative space.
\end{lem}

There is a connection between derivative frames and topological spaces.
Given a derivative frame $\langle W,\sqsubset \rangle$, we define a topology $\tau_\sqsubset $ on $W$ such that $U\in \tau_\sqsubset $ iff $U$ is upwards closed under $\sqsubset $, in the sense that if $w\in U$ and $v \sqsupset w$ then $v\in U$.
Topologies of this form are {\em Aleksandrov topologies.}
The following is well-known and easily verified.

\begin{lem}\label{lemmDtau}
Let $\langle W,\sqsubset \rangle$ be a derivative frame and $\tau=\tau_\sqsubset $.
Then, $d_\tau={\downarrow_\sqsubset} $ iff $\sqsubset  $ is irreflexive and $c_\tau={\downarrow_\sqsubset} $ iff $\sqsubset  $ is reflexive.
\end{lem}

Dynamical systems consist of a topological space equipped with a continuous function.
Recall that if $\langle X,\tau\rangle$ and $\langle Y,\upsilon\rangle$ are topological spaces and $f\colon X\to Y$, then $f$ is {\em continuous} if whenever $U\in \upsilon$, it follows that $f^{-1}(U)\in \tau$ or, equivalently, if whenever $A$ is closed in $Y$, then $f^{-1}(A)$ is closed in $X$.
The function $f$ is {\em open} if $f(V)$ is open whenever $V$ is open, and $f$ is a {\em homeomorphism} if $f$ is continuous, open and bijective.
It is well-known (and not hard to check; see e.g.~\cite{Munkres}) that $f$ is continuous iff $c_\tau f^{-1}(A) \subseteq f^{-1}c_\upsilon(A)$ for all $A\subseteq Y$.
By unfolding the definition of the closure operator, this becomes $ f^{-1}(A)\cup d_\tau f^{-1}(A) \subseteq  f^{-1} (A)\cup  f^{-1}d_\upsilon(A)$, or equivalently, $  d_\tau f^{-1}(A) \subseteq  f^{-1} (A)\cup  f^{-1}d_\upsilon(A)$.
We thus arrive at the following definition.

\begin{defi}\label{defCH}
Let $\langle X,\rho_X \rangle$ and $\langle Y,\rho_Y\rangle $ be derivative spaces.
We say that $f\colon X\to Y$ is {\em continuous} if for all $A\subseteq Y$, $  \rho_X f^{-1}(A) \subseteq  f^{-1} (A)\cup  f^{-1} \rho_Y (A)$.
We say that $f$ is a {\em homeomorphism} if it is bijective and $  \rho_X  f^{-1}(A) =  f^{-1} \rho_Y (A)$.
\end{defi}

It is worth checking that these definitions coincide with their standard topological counterparts.

\begin{lem}
If $\langle X,\tau\rangle $ and $\langle Y,\upsilon \rangle $ are topological spaces with Cantor derivatives $d_\tau$ and $d_\upsilon$ respectively, and $f\colon X\to Y$, then
\begin{enumerate}

\item  $f$ is continuous as a function between topological spaces if and only if it is continuous as a function between derivative spaces, and

\item $f$ is a homeomorphism as a function between topological spaces if and only if it is a homeomorphism as a function between derivative spaces.

\end{enumerate}
\end{lem}

\begin{proof}
We prove the first claim and leave the second to the reader.
Suppose that $f\colon X\to Y$ is continuous in the topological sense and let $x\in  \rho_X f^{-1}(A) $.
If $f(x)\in A$ then $x\in f^{-1}(A)\cup f^{-1} \rho_Y (A)$, so we may assume otherwise.
Let $U$ be any neighbourhood of $f(x)$; note that $f^{-1}(U)$ is open, since $f$ is continuous.
Since $x\in  \rho_X f^{-1}(A)$, there is $y\in f^{-1}(U) \setminus \{x\}$ such that $f(y) \in A$.
Since by assumption $f(x)\notin A$, we obtain $y\neq f(x)$, and since $U$ was arbitrary, $f(x) \in \rho_Y (A)$, so that $x\in f^{-1}(U) \cup  f^{-1} \rho_Y (A)$, as needed.

Now suppose that $f\colon X\to Y$ is continuous as a function between derivative spaces and let $A\subseteq Y$ be closed.
Then, $c_\tau f^{-1}(A) = f^{-1}(A) \cup \rho_X f^{-1}(A) \subseteq f^{-1}(A) \cup  f^{-1} \rho_Y (A) =f^{-1} (A\cup \rho_Y (A)) = f^{-1} (c_\upsilon(A)) = f^{-1}(A) $, where the last equality uses that $A$ is closed.
But then, $c_\tau f^{-1}(A) \subseteq f^{-1}(A) $ and hence $f^{-1}(A)$ is closed, as needed.
\end{proof}

We are particularly interested in the case where $X=Y$, which leads to the notion of {\em dynamic derivative system.}

\begin{defi}
A {\em dynamic derivative system} is a triple $\mathfrak S =\lb X,\rho,f\rb$, where $\lb X,\rho\rb$ is a derivative space and $f\colon X\to X$ is continuous.
If $f$ is a homeomorphism, we say that $\mathfrak S$ is {\em invertible.}
\end{defi}

If $\mathfrak S = \langle X,\rho,f\rangle$ is such that $\rho = d_\tau$ for some topology $\tau$, we say that $\mathfrak S$ is a {\em dynamic topological system} and identify it with the triple $\langle X,\tau,f \rangle $.
If $\rho={\downarrow_\sqsubset }$ for some weakly transitive relation $\sqsubset$, we say that $\mathfrak S$ is a {\em dynamic Kripke frame} and identify it with the triple $\langle X,\sqsubset ,f\rangle$.

It will be convenient to characterise dynamic Kripke frames in terms of the relation $\sqsubset $.

\begin{defi}[monotonicity and weak monotonicity]
Let $\langle W,\sqsubset \rangle$ be a derivative frame.
A function $f\colon W\to W$ is {\em monotonic} if $w\sqsubset  v$ implies $f(w)\sqsubset  f(v)$, and {\em weakly monotonic} if $w\sqsubset  v$ implies $f(w)\sqsubseteq f(v)$.

The function $f$ is {\em persistent} if it is a bijection and for all $w,v\in W$, $w\sqsubset  v$ if and only if $f(w) \sqsubset  f(v)$.
We say that a Kripke frame is {\em invertible} if it is equipped with a persistent function.
\end{defi}

\begin{lem}
If $\langle W,\sqsubset \rangle$ is a derivative frame and $f\colon W\to W$, then
\begin{enumerate}

\item if $f$ is weakly monotonic if and only if it is continuous with respect to $\downarrow_\sqsubset $, and

\item if $f$ is persistent if and only if it is a homeomorphism with respect to $\downarrow_\sqsubset $.

\end{enumerate}
\end{lem}

Our goal is to reason about various classes of dynamic derivative systems	using the logical framework defined in the next section.

\section{Dynamic Topological Logics}\label{secDTL}

In this section we discuss dynamic topological logic in the general setting of dynamic derivative systems.
Given a non-empty set $\mathsf{PV}$ of propositional variables, the language $\mathcal{L}_{\lozenge}^{\circ}$ is defined recursively as follows:
$$
\varphi::= p \; | \; \varphi\wedge \varphi \; | \; \neg\varphi \; | \; \lozenge\varphi  \; | \; \bc\varphi,$$
where $p \in \mathsf{PV}.$
It consists of the Boolean connectives $\wedge$ and $\neg$, the temporal modality $\bc$, and the modality $\lozenge$ for the derivative operator with its dual $\square := \neg\lozenge\neg$.
The interior modality may be defined as $\boxdot\varphi:=\varphi\wedge\square\varphi$.

\begin{defi}[semantics]\label{d-semantics}
A \emph{dynamic derivative model} (DDM) is a quadruple $\mathfrak {M}=\langle X,\rho,f,\nu\rb$ where $\langle X,\rho,f\rb$ is a dynamic derivative system and $\nu:\mathsf{PV}\rightarrow\wp(X)$ is a valuation function assigning a subset of $X$ to each propositional letter in $\mathsf{PV}$.  
Given $\varphi \in\mathcal{L}_{\lozenge}^{\circ }$, we define the truth set $\|\varphi\| \subseteq X$ of $\varphi$ inductively as follows:
\begin{multicols}2
\begin{itemize}

	\item $\| p \| = \nu(p)$;
	\item $\| \neg \varphi \| = X \setminus \|  \varphi \| $;
	\item $\| \varphi\wedge\psi \| = \|\varphi\| \cap \|\psi\| $;
	\item $\|\lozenge\varphi\| = \rho ( \|\varphi\| ) $;
		\item $\| \bc\varphi \| = f^{-1}(\| \varphi\|)$.
		\item[]

\end{itemize}
\end{multicols}
We write $\mathfrak  M,x\models\varphi$ if $x\in \|\varphi\|$, and $\mathfrak  M\models\varphi$ if $\|\varphi\| = X$.
We may write $\|\cdot\|_\mathfrak M$ or $\|\cdot\|_\nu$ instead of $\|\cdot\|$ when working with more than one model or valuation.
\end{defi}

We define other connectives (e.g.\ $\vee,\rightarrow$) as abbreviations in the usual way.
The fragment of $\mathcal{L}_{\lozenge}^{\circ }$ that includes only $\lozenge$ will be denoted by $\mathcal L_{\lozenge}$.
Since our definition of the semantics applies to any derivative space and a general operator $\rho$, we need not differentiate in our results between $d$-logics, logics based on closure semantics and logics based on relational semantics. Instead, we indicate the specific class of derivative spaces to which the result applies.

In order to keep with the familiar axioms of modal logic, it is convenient to discuss the semantics of $\dn$.
Accordingly, we define the dual of the derivative, called the \emph{co-derivative}.

\begin{defi}[co-derivative]
Let $\langle X,\rho\rangle$ be a derivative space.
For each $S\subseteq X$ we define $\hat \rho(S):=X\backslash  \rho(X\backslash S)$ to be the {\em co-derivative} of $S$.
\end{defi}

The co-derivative satisfies the following properties, where $A,B\subseteq X$:
\begin{enumerate}
\item $\hat \rho (X)=X$;
\item $A\cap \hat \rho(A)\subseteq \hat \rho \hat \rho(A)$;
\item $\hat \rho(A\cap B)=\hat \rho(A)\cap \hat \rho(B)$.
\end{enumerate}

It can readily be checked that for each dynamic derivative model $\langle X,\rho,f,\nu\rangle$ and all formulas $\varphi$, $\|\dn\varphi\| = \hat \rho (\|\varphi\|)$.
The co-derivative can be used to define the standard {\em interior} of a set, given by $i(A)=A\cap\hat \rho(A)$ for each $A\subseteq X$. 
This implies that $U\subseteq \hat \rho(U)$ for each open set $U$, but not necessarily $\hat \rho(U)\subseteq U$.
Next, we discuss the systems of axioms that are of interest to us.
Let us list the axiom schemes and rules that we will consider in this paper:
\begin{multicols}2
\begin{description}
\item{\rm Taut} $:= \text{All propositional tautologies}$
\item{\rm K} $:= \dn(\varphi\to\psi)\to(\dn\varphi\to \dn\psi)$
\item{\rm T} $:=    \dn\varphi \to \varphi $
\item{\rm w4} $ :=  \varphi \wedge  \dn\varphi \to\dn\dn\varphi $
\item{\rm L} $:=    \square(\square \varphi \rightarrow \varphi) \rightarrow \square \varphi  $
\item{\rm 4} $  :=    \dn\varphi \to\dn\dn\varphi $
\item{${\rm Next}_\neg$} $:=\neg\bc\varphi\leftrightarrow\bc\neg\varphi$
\item{${\rm Next}_\wedge$} $:=\bc (\varphi\wedge\psi)\leftrightarrow \bc \varphi \wedge\bc \psi $
\item{\rm C} $:= \bc\varphi\wedge \bc\dn\varphi\to  \dn\bc\varphi$
\item{\rm H} $ := \dn\bc\varphi\leftrightarrow\bc\dn\varphi$
\item{\rm MP} $:= \dfrac{\varphi \ \ \varphi\to \psi}\psi$
\item{${\rm Nec}_\dn$} $:= \dfrac{\varphi }{\dn \varphi}$
\item{${\rm Nec}_\bc$} $:= \dfrac{\varphi }{\bc \varphi}$
\end{description}
\end{multicols}
The `base modal logic' over $\mathcal L_\dd$ is given by
$$\mathbf{K}:= {\rm Taut}+{\rm K} +{\rm MP}+{\rm Nec}_\dn,$$
but we are mostly interested in proper extensions of $\mathbf K$.
Let $\Lambda$ and $\Lambda'$ be logics over languages $\mathcal L$ and ${\mathcal L}'$ respectively. 
We say that $\Lambda$ extends $\Lambda'$ if $\mathcal L'\subseteq\mathcal L$ and all the axioms and rules of $\Lambda'$ are derivable in $\Lambda$.
A logic over $\mathcal L_\dd$ is {\em normal} if it extends $\mathbf{K}$.
If $\Lambda$ is a logic and $\varphi$ is a formula, $\Lambda+\varphi$ is the least extension of $\Lambda$ which contains every substitution instance of $\varphi$ as an axiom.

We then define $\mathbf{wK4}:= \mathbf{K}+{\rm w4}$, $\mathbf{K4}:= \mathbf{K}+{\rm 4}$,
$\mathbf{S4}:=  \mathbf{K4}+{\rm T}$ and $\mathbf{GL}:=  \mathbf{K4}+{\rm L}$.
These logics are well-known and characterise certain classes of topological spaces and Kripke frames which we review below.
In addition, for a logic $\Lambda$ over $\mathcal L_\dd$, $\Lambda\mathbf{F}$ is the logic over $\mathcal L^\circ_\dd$ given by\footnote{Logics of the form $\Lambda\mathbf{ F}$ correspond to dynamical systems with a possibly discontinuous function. We will not discuss discontinuous systems in this paper; see \cite{artemov} for more information.}
$$\Lambda\mathbf{F} := \Lambda+{\rm Next}_\neg+{\rm Next}_\wedge+{\rm Nec}_\bc.$$
This simply adds axioms of linear temporal logic to $\Lambda$, which hold whenever $\bc$ is interpreted using a function.
Finally, we define $\Lambda{\bf C}:=\Lambda\mathbf{F}+\rm C$ and $\Lambda{\bf H}:=\Lambda\mathbf{F}+\rm H$, which as we will see correspond to derivative spaces with a continuous function or a homeomorphism respectively.
The purely topological fragments of these logics have been well-studied, starting with the following well-known result dating back to McKinsey and Tarski \cite{Tarski}.

\begin{thm}
$\mathbf{S4}$ is the logic of all topological closure spaces, the logic of all transitive, reflexive derivative frames, and the logic of the real line with the standard closure.
\end{thm}

Analogously, Esakia demonstrated that $\bold{wK4}$ is the logic of topological derivative spaces \cite{Esakia2}.

\begin{thm}
$\bold{wK4}$ is the logic of all topological derivative spaces, as well as the logic of all weakly-transitive derivative frames.
\end{thm}

The logic $\bold{K4}$ includes the axiom $\square p \to \square\square p$, which is not valid over the class of all topological spaces.
The class of spaces satisfying this axiom is denoted by $T_D$, defined as the class of spaces in which every singleton is the result of an intersection between an open set and a closed set.
Moreover, Esakia showed that this is the logic of transitive derivative frames \cite{EsakiaAlgebra}.

\begin{thm}
$\mathbf{K4}$ is the logic of all $T_D$ topological derivative spaces, as well as the logic of all transitive derivative frames.
\end{thm}

Many familiar topological spaces, including Euclidean spaces, satisfy the $T_D$ property, making $\mathbf{K4}$ central in the study of topological modal logic.
A somewhat more unusual class of spaces, which is nevertheless of particular interest to us, is the class of {\em scattered spaces.}

\begin{defi}[scattered space]
A topological space $\lb X,\tau\rb$ is {\em scattered} if for every $S\subseteq X$,
$ S\subseteq d(S) \text{ implies } S=\varnothing.$
\end{defi}
This is equivalent to the more common definition of a scattered space where a topological space is called scattered if every non-empty subset has an isolated point.
Scattered spaces are closely related to converse well-founded relations.
Below, recall that $\langle W,\sqsubset \rangle$ is {\em converse well-founded} if there is no infinite sequence $w_0\sqsubset  w_1\sqsubset  \ldots$ of elements in $W$.

\begin{lem}\label{lemScatteredKrip}
If $\langle W,\sqsubset \rangle$ is an irreflexive frame, then $\langle W,\tau_\sqsubset \rangle$ is scattered iff $\sqsubset $ is converse well-founded.
\end{lem}

\begin{proof}
First suppose that $\langle W,\tau_\sqsubset \rangle$ is scattered and let $A\subseteq W$ be non-empty.
Let $x$ be an isolated point of $A$.
Then, $x$ is isolated in $A$, and $U:=\{w\}\cup {\uparrow_\sqsubset}(\{w\})$ is the least neighbourhood of $w$ so we must have that $U\cap A = \{w\}$, i.e.~$w$ is a $\sqsubset$-maximal point of $A$.

Conversely, if $\langle W, \sqsubset \rangle$ is converse well-founded and $A\subseteq W$ is non-empty, it is readily checked that any $\sqsubset$-maximal point of $A$ is isolated in $A$.
\end{proof}

\begin{thm}[Simmons \cite{simmons} and Esakia \cite{esakia}]\label{thmGLComp}
$\mathbf{GL}$ is the logic of all scattered topological derivative spaces, as well as the logic of all converse well-founded derivative frames and the logic of all finite, transitive, irreflexive derivative frames.
\end{thm}

Aside from its topological interpretation, the logic $\mathbf{GL}$ is of particular interest as it is also the logic of provability in Peano arithmetic, as was shown by Boolos \cite{boolo}.
Meanwhile, logics with the $\rm C$ and $\rm H$ axioms correspond to classes of dynamical systems.

\begin{lem}\label{lemCH}\
\begin{enumerate}

\item If $\Lambda$ is sound for a class of derivative spaces $\Omega$, then $\Lambda{\bf C}$ is sound for the class of dynamic derivative systems $\langle X,\rho,f\rangle$, where $\langle X,\rho\rangle\in \Omega$ and $f$ is continuous.

\item If $\Lambda$ is sound for a class of derivative spaces $\Omega$, then $\Lambda{\bf H}$ is sound for the class of dynamic derivative systems $\langle X,\rho,f\rangle$, where $\langle X,\rho\rangle\in \Omega$ and $f$ is a homeomorphism.

\end{enumerate}
\end{lem}

The above lemma is easy to verify from the definitions of continuous functions and homeomorphisms in the context of derivative spaces (Definition~\ref{defCH}).

\subsection{Prior Work}

The study of dynamic topological logic originates with Artemov, Davoren and Nerode, who observed that it is possible to reason about dynamical systems within modal logic.
They introduced the logic $\bf S4C$ and proved that it is decidable, as well as sound and complete for the class of all dynamic closure systems (i.e.\ dynamic derivative systems based on a closure space).
Kremer and Mints \cite{kremer} considered the logic $\bf S4H$ and showed that it is sound and complete for the class of dynamic closure systems where $f$ is a homeomorphism.

The latter also suggested adding the `henceforth' operator, $*$, from Pnueli's linear temporal logic ($\bf LTL$) \cite{ltl}, leading to the language we denote by $\mathcal{L}^{\circ*}_\dd$.
The resulting trimodal system was named {\em dynamic topological logic} ($\bf DTL$).
Kremer and Mints offered an axiomatisation for $\bf DTL$, but Fern\'andez-Duque proved that it is incomplete; in fact, $\bf DTL$ is not finitely axiomatisable \cite{david}.
Fern\'andez-Duque also showed that $\bf DTL$ has a natural axiomatisation when extended with the {\em tangled closure} \cite{david3}.
In contrast, Konev et al.\ established that $\bf DTL$ over the class of dynamical systems with a homeomorphism is non-axiomatisable \cite{KonevMetric}.

\subsection{The tangled closure on scattered spaces}\label{secTangle}

Our interest in considering the class of scattered spaces within dynamic topological logic is motivated by results of Fern\'andez-Duque \cite{david3}.
He showed that the set of valid formulas of $\mathcal{L}^{\circ*}_\dd$ over the class of all dynamic closure systems is not finitely axiomatisable.
Nevertheless, he found a natural (yet infinite) axiomatisation by introducing the \emph{tangled closure} and adding it to the language of $\bf DTL$ \cite{FernandezTangled}.
Here, we use the more general {\em tangled derivative}, as defined by Goldblatt and Hodkinson \cite{Goldblatt2017Spatial}.

\begin{defi}[tangled derivative]
Let $\lb X,\rho\rb$ be a derivative space and let $\mathcal{S}\subseteq \wp(X)$. Given $A\subseteq X$, we say that $\mathcal{S}$ is tangled in $A$ if for all $S\in \mathcal{S}$,
$ A\subseteq \rho(S\cap A) $.
We define the \emph{tangled derivative} of $\mathcal{S}$ as 
$$ \mathcal{S}^*:=\bigcup \{ A\subseteq X : \mathcal{S}\text{ is tangled in }A\}.$$ 
\end{defi}

The {\em tangled closure} is then the special case of the tangled derivative where $\rho$ is a closure operator, i.e.~$\rho=c$.
Fern\'andez-Duque's axiomatisation is based on the extended language $\mathcal L^{\circ*}_{\scriptsize\loz}$.
This language is obtained by extending $\mathcal L^{\circ*}_{\dd}$ with the following operation. 
\begin{defi}[tangled language]
We define $\mathcal L^{\circ*}_{\scriptsize\loz}$ by extending the recursive definition of $\mathcal L^{\circ*}_{\dd}$ in such a way that if $\varphi_1,\ldots,\varphi_n\in \mathcal L^{\circ*}_{\scriptsize\loz}$, then $\loz\{\varphi_1,\ldots,\varphi_n\} \in \mathcal L^{\circ*}_{\scriptsize\loz}$.
The semantic clauses are then extended so that on every model $\mathfrak M$,
$$\| {\loz\{\varphi_1,\ldots,\varphi_n\}}\| = \{\|\varphi_1\|,\ldots,\|\varphi_n\|\}^* .$$
\end{defi}

 The logic $\bold{DGL}$ is an extension of $\bold{GLC}$ that includes the temporal operator $*$. Unlike the complete axiomatisation of $\mathbf{DTL}$ that requires the tangled operator, in the case of $\bold{DGL}$, we should be able to avoid this and use the original spatial operator $\lozenge$ alone. This is due to the following:

\begin{restatable}{thm}{scatteredlozenge}
Let $\mathfrak{X}=\lb X,\tau \rb$ be a scattered space and $\{\varphi_1,\dots,\varphi_n\}$ a set of formulas. Then
$$ \loz\{\varphi_1,\dots,\varphi_n\}\equiv \bot.$$
\end{restatable}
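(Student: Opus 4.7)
The plan is to proceed by contrapositive: assume $\loz\{\varphi_1,\dots,\varphi_n\}$ is satisfied at some point in some model based on $\mathfrak X$, and derive that $\mathfrak X$ fails to be scattered. Concretely, suppose there is a valuation $\nu$ and a point $x \in X$ with $x \in \|\loz\{\varphi_1,\dots,\varphi_n\}\|_\nu$. Unfolding the semantic clause for the tangled derivative, this means there exists $A \subseteq X$ with $x \in A$ (so $A \neq \varnothing$) such that $\{\|\varphi_1\|,\dots,\|\varphi_n\|\}$ is tangled in $A$, i.e.\ $A \subseteq d(\|\varphi_i\| \cap A)$ for each $i \in \{1,\dots,n\}$.

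The key step is to observe that the Cantor derivative is monotonic: from property~\ref{itDTwo} in Section~\ref{secPrel} (namely $d(B \cup C) = d(B) \cup d(C)$), it follows that whenever $B \subseteq C$ one has $d(B) \subseteq d(C)$. Applying this with $B = \|\varphi_1\| \cap A$ and $C = A$ yields $d(\|\varphi_1\| \cap A) \subseteq d(A)$, and chaining with the tangled inclusion gives $A \subseteq d(A)$. But by the definition of a scattered space, $A \subseteq d(A)$ forces $A = \varnothing$, contradicting $x \in A$. Hence no such $x$ exists on any scattered space, i.e.\ $\|\loz\{\varphi_1,\dots,\varphi_n\}\|_\nu = \varnothing$ in every model, proving $\loz\{\varphi_1,\dots,\varphi_n\} \equiv \bot$.

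There is essentially no obstacle here: the argument is a one-line application of monotonicity of $d$ together with the defining property of scatteredness, and it does not actually matter what the formulas $\varphi_i$ are (only one of them is ever needed). The same proof works uniformly for the tangled derivative of any family of subsets, not just those definable by formulas, so the lemma could even be stated more abstractly as: in a scattered derivative space $\lb X,d\rb$, the operator $(\,\cdot\,)^*$ is identically $\varnothing$.
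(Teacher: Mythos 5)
Your proposal is correct and follows essentially the same argument as the paper: unfold the tangled-derivative semantics to obtain a witness set $A$ with $A\subseteq d(\|\varphi_i\|\cap A)$, deduce $A\subseteq d(A)$, and invoke scatteredness to force $A=\varnothing$, contradicting $x\in A$. The only difference is that you spell out the monotonicity of $d$ (via $d(B\cup C)=d(B)\cup d(C)$), which the paper leaves implicit.
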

\begin{proof}
Let $\nu$ be a valuation on $\mathfrak X$, $\mathfrak M=\langle \mathfrak X,\nu\rangle$, and suppose that $\mathfrak M,x\models\loz\{\varphi_1,\dots ,\varphi_n\}$ for some $x\in X$.
Then by definition there exists $S\subseteq X$ s.t. $x\in S$ and $ S\subseteq d( S\cap \|\varphi_i\|)$, for all $i\leq n$. It follows that $S\subseteq d(S)$. Since $\mathfrak{X}$ is scattered, then $S=\varnothing$ in contradiction. It follows that $\mathfrak M,x\not\models\loz\{\varphi_1,\dots ,\varphi_n\}$ for all $x\in X$.
\end{proof}

This leads to the conjecture that the axiomatic system of Kremer and Mints \cite{kremer}, combined with $\mathbf{GL}$, will lead to a finite axiomatisation for $\mathbf{DGL}$.
While such a result requires techniques beyond the scope of the present work, the completeness proof we present here for $\mathbf{GLC}$ is an important first step.
Before proving topological completeness for this and the other logics we have mentioned, we show that they are complete  and have the finite model property for their respective classes of dynamic derivative frames.

\section{The Canonical Model}

The first step in our Kripke completeness proof will be a fairly standard canonical model construction.
For a given logic $\Lambda$, a \emph{maximal $\Lambda$-consistent set} ($\Lambda$-MCS) $w$ is a set of formulas that is $\Lambda$-consistent, i.e.\ $w\nvdash_{\Lambda} \bot$, and every set of formulas that properly contains it is $\Lambda$-inconsistent.

Given a logic $\Lambda$ over $\mathcal L^{\circ}_\dd$, let $\mathfrak{M}^\Lambda_{\rm c}=\langle W_{\rm c}, \sqsubset _{\rm c} ,g_{\rm c},\nu_{\rm c}\rangle$ be the canonical model for $\Lambda$, where
 \begin{enumerate}
	\item $W_{\rm c}$ is the set of all $\Lambda$-MCSs;
	\item $w\sqsubset _{\rm c}v$ iff for all formulas $\varphi$, if $\square\varphi\in w$, then $\varphi\in v$;

\item $g_{\rm c}(w)=\{\varphi:\bc\varphi\in w\}$;
\item $\nu_{\rm c}(p)=\{w:p\in w\}$.
\end{enumerate}

It can easily be verified that $\bold{wK4C}$ defines the class of all weakly transitive Kripke models with a weakly monotonic map. Moreover, $\bold{K4C}$ defines the class of all transitive Kripke models with a weakly monotonic map. We call these models $\bold{wK4C}$ models and $\bold{K4C}$ models respectively.

\begin{restatable}{lem}{canonical}
If $\Lambda$ extends $\bold{wK4C}$, then the canonical model for $\Lambda$ is a $\bold{wK4C}$ model. If $\Lambda$ extends $\bold{K4C}$, then the canonical model of $\Lambda$ is a $\bold{K4C}$ model.
\end{restatable}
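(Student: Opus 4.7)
The proof is a standard canonical model verification with three obligations: that $g_{\rm c}$ maps $\Lambda$-MCSs to $\Lambda$-MCSs, that $\sqsubset_{\rm c}$ is (weakly) transitive, and that $g_{\rm c}$ is weakly monotonic with respect to $\sqsubset_{\rm c}$. The first obligation is routine: consistency of $g_{\rm c}(w)$ follows from ${\rm Nec}_\bc$, while ${\rm Next}_\neg$ and ${\rm Next}_\wedge$ make $\bc$ commute with every Boolean connective and thereby deliver maximality. As a consequence $\bc$ behaves as a normal modality on $W_{\rm c}$, which I will silently use below.

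For transitivity of $\sqsubset_{\rm c}$ in the $\bold{K4C}$ case, one just iterates axiom $\mathrm{4}$ in the usual way: from $\square\varphi\in w$ one obtains $\square\square\varphi\in w$, which propagates through $w\sqsubset_{\rm c} v\sqsubset_{\rm c} u$ to $\varphi\in u$. Weak transitivity in the $\bold{wK4C}$ case is slightly more delicate because axiom $\rm w4$ carries the extra premise ``$\varphi\in w$''. I would resolve this with the well-known $\chi$-surgery trick: assuming $w\sqsubset_{\rm c} v\sqsubset_{\rm c} u$ with $w\neq u$, pick $\chi$ in the symmetric difference $w\triangle u$ and, given $\square\varphi\in w$, replace $\varphi$ by $\varphi'=\varphi\vee\chi$ or $\varphi'=\varphi\vee\neg\chi$ according to which side of the symmetric difference contains $\chi$. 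Normality of $\square$ keeps $\square\varphi'\in w$, the construction forces $\varphi'\in w$ as well, and axiom $\rm w4$ then gives $\square\square\varphi'\in w$; propagating through $v$ and $u$ yields $\varphi'\in u$, and the choice of $\chi$ finally eliminates the artificial disjunct to give $\varphi\in u$.

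The weak monotonicity of $g_{\rm c}$ follows the same pattern with axiom $\rm C$ playing the role of $\rm w4$. Assuming $w\sqsubset_{\rm c} v$ and $g_{\rm c}(w)\neq g_{\rm c}(v)$, I want to show that $\bc\square\varphi\in w$ implies $\bc\varphi\in v$. The axiom $\rm C$, $\bc\varphi\wedge\bc\square\varphi\to\square\bc\varphi$, is almost enough: it gives the result immediately when $\bc\varphi\in w$, and otherwise the $\chi$-surgery carries over almost verbatim. One picks $\chi$ so that $\bc\chi\in w\triangle v$, sets $\varphi'=\varphi\vee\chi$ or $\varphi'=\varphi\vee\neg\chi$ as before, uses ${\rm Next}_\neg$ and ${\rm Next}_\wedge$ to verify both $\bc\varphi'\in w$ and $\bc\square\varphi'\in w$, applies $\rm C$ to obtain $\square\bc\varphi'\in w$, transfers $\bc\varphi'\in v$ across $w\sqsubset_{\rm c} v$, and again uses the choice of $\chi$ to eliminate the unwanted disjunct.

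The main obstacle in the whole proof is precisely this surgery step, which appears twice in almost identical guise and exists to supply the superfluous ``$\varphi\in w$'' or ``$\bc\varphi\in w$'' premise in $\rm w4$ and $\rm C$ respectively. Strengthening to $\bold{K4C}$ makes the first occurrence trivial since $4$ has no side condition, but the second occurrence is unaffected and the monotonicity argument goes through unchanged.
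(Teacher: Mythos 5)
Your proposal is correct and follows essentially the same route as the paper: the paper's proof establishes weak transitivity via exactly your ``$\chi$-surgery'' (adjoining a disjunct $\varphi\vee\chi$ with $\chi\in w\setminus u$ to supply the side premise of $\rm w4$), establishes weak monotonicity of $g_{\rm c}$ by the identical disjunction trick applied inside $g_{\rm c}(w)$ so that axiom $\rm C$ can fire, and handles the $\bold{K4C}$ case by iterating axiom $4$ directly. The only cosmetic difference is that you make explicit the routine verification that $g_{\rm c}(w)$ is an MCS, which the paper leaves implicit in the canonical model construction.
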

\begin{proof}
Let $\mathfrak{M}^\Lambda_{\rm c}=\langle W , \sqsubset   ,g ,\nu \rangle$.
Suppose that $\Lambda$ extends $\bold{wK4C}$. We prove that $g $ is weakly monotonic. Suppose $w\sqsubset  v$ and $g (w)\neq g (v)$. Since $g (w)\neq g (v)$, there exists $\varphi$ such that $\varphi\in g (w)$ and  $\varphi\notin g (v)$. We consider an arbitrary $\square\psi\in g (w)$ then clearly $\square (\psi\vee\varphi)\in g(w)$ and $(\psi\vee\varphi)\in g(w)$. In particular, $\bc\square (\psi\vee\varphi)\in w$ and $\bc(\psi\vee \varphi)\in w$. Since $(\bc\square p\wedge \bc p)\rightarrow \square\bc p\in \bold{wK4C}$, it follows that $\square\bc(\varphi\vee\psi)\in w$. From  $w\sqsubset  v$, we obtain $\bc(\varphi\vee\psi)\in v$ and $(\varphi\vee\psi)\in g(v)$. Since $\varphi\notin g(v)$, $\psi\in g(v)$ and because $\square\psi$ is arbitrary, $g(w) \sqsubset   g(v)$, as required. Hence $g$ is weakly monotonic.

 We prove that $\sqsubset $ is weakly transitive. Suppose $w\sqsubset  v\sqsubset  u$ and $w\neq u$. From $w\neq u$ it follows that there exists $\varphi$ such that $\varphi\in w$ and $\varphi\notin u$. We consider an arbitrary $\square\psi\in w$.
 Clearly $\square(\psi\vee\varphi)\in w$ and $(\psi\vee\varphi)\in w$. Since $(\varphi\wedge\square \varphi)\rightarrow \square\square \varphi\in\bold{wK4C}$, we have that $\square\square(\psi\vee\varphi)\in w$. Then, $w\sqsubset  v\sqsubset  u$  implies $(\psi\vee\varphi)\in u$ and as $\varphi\notin u$, we have that $\psi\in u$. Since $\square\psi$ is arbitrary, $w\sqsubset   u$ holds, as required.
 
  It follows that $g$ is weakly monotonic and $\sqsubset  $ is weakly transitive and hence $\mathfrak{M}^\Lambda_{\rm c}$ is a $\bold{wK4C}$ model.
  
  Suppose that $\Lambda$ extends $\bold{K4C}$, then weak monotonicity holds as before since $\bold{K4C}$ extends $\bold{wK4C}$. Therefore, we only need to prove that $\sqsubset  $ is transitive. Suppose $w\sqsubset  v\sqsubset   u$. We consider an arbitrary $\square\psi\in w$. Since $\square p\rightarrow \square\square p\in \bold{wK4C}$, then $\square\square \psi\in w$. Then, $w\sqsubset  v\sqsubset   u$ implies $\psi\in u$ and since $\square\psi$ is arbitrary,   $w \sqsubset   u$, as required.
  
  It follows that $g$ is weakly monotonic and $\sqsubset  $ is transitive and thus $\mathfrak{M}^\Lambda_{\rm c}$ is a $\bold{K4C}$ model.
\end{proof}

It is a well-known fact that the transitivity axiom $\square p\rightarrow \square\square p$ is derivable in $\bold{GL}$ (see \cite{trans}). Therefore, $\bold{GLC}$ extends the system $\bold{K4C}$. The proofs of the following two lemmas are standard and can be found for example in \cite{black}.

\begin{lem}[existence lemma]\label{lemExist}
Let $\Lambda$ be a normal modal logic and let $\mathfrak M^\Lambda_{\rm c} = \langle W_{\rm c},\sqsubset  _{\rm c},g_{\rm c},\nu_{\rm c}\rangle$.
Then, for every $w\in W_{\rm c}$ and every formula $\varphi$ in $\Lambda$, if $\lozenge\varphi\in w$ then there exists a point $v\in W_{\rm c}$ such that $w\sqsubset  _{\rm c} v$ and $\varphi\in v$.
\end{lem}

 \begin{lem}[truth lemma]
 Let $\Lambda$ be a normal modal logic.
For every $w\in W_{\rm c}$ and every formula $\varphi$ in $\Lambda$, $$\mathfrak M^\Lambda_{\rm c}, w\models \varphi \text{ iff }\varphi\in w.$$
 \end{lem}

\begin{cor}
The logic $\bold{wK4C}$ is sound and complete with respect to the class of all weakly monotonic dynamic derivative frames, and $\bold{K4C}$ is sound and complete with respect to the class of all weakly monotonic, transitive dynamic derivative frames.
\end{cor}

\section{A finitary accessibility relation}

One key ingredient in our finite model property proof will be the construction of a `finitary' accessibility relation $\sqsubset  _{\Phi}$ on the canonical model.
This accessibility relation will have the property that each point has finitely many successors, yet the existence lemma will hold for formulas in a prescribed finite set $\Phi$.
This is a kind of selective filtration (see \cite{AlexCha}).

We define {\em the $\sqsubset  _{\rm c}$-cluster} $C(w)$ for each point $w\in W_{\rm c}$ as 
$$ C(w)=\{w\}\cup\{v:w\sqsubset  _{\rm c}v\sqsubset  _{\rm c}w\}.$$

\begin{defi}[$\varphi$-final set]\label{finalset}
A set $w$ is said to be a $\varphi$-final set (or point) if $w$ is an MCS, $\varphi\in w$, and whenever $w \sqsubset  _{\rm c} v$ and $\varphi\in v$, it follows that $v\in C(w)$.
\end{defi}

Let us write $\sqsubseteq_{\rm c}$ for the reflexive closure of $\sqsubset_{\rm c}$.
It will be convenient to characterise $\sqsubseteq_{\rm c}$ in the canonical model syntactically.
Recall that $\boxdot\varphi:= \varphi\wedge\square \varphi$.

\begin{restatable}{lem}{extend}\label{lemmBoxDot}
If $\Lambda$ extends $\bold{wK4C}$ and $w,v\in W_{\rm c}$, then $w\sqsubseteq_{\rm c} v$ if and only if whenever $\boxdot\varphi\in w $, it follows that $\boxdot\varphi\in v$.
\end{restatable}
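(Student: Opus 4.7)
The plan is to prove the two directions separately, exploiting the fact that in $\mathbf{wK4}$ the operator $\boxdot$ behaves like an $\mathbf{S4}$-style box. More precisely, one verifies from the $\mathrm{w4}$ axiom $\varphi\wedge\square\varphi\to\square\square\varphi$ (which is literally $\boxdot\varphi\to\square\square\varphi$) that $\boxdot\varphi\to\boxdot\boxdot\varphi$ is derivable in $\mathbf{wK4}$, so the relation defined by $\boxdot$-preservation on maximal consistent sets is automatically reflexive and transitive. This is the key algebraic fact driving the argument.

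For the forward direction, I would handle the case $w=v$ trivially and focus on $w\sqsubset_{\rm c}v$. Given $\boxdot\varphi\in w$, we have both $\varphi\in w$ and $\square\varphi\in w$, and an application of the $\mathrm{w4}$ axiom yields $\square\square\varphi\in w$ as well. Since $w\sqsubset_{\rm c}v$, the defining clause of $\sqsubset_{\rm c}$ applied to $\square\varphi$ and to $\square\square\varphi$ transfers $\varphi$ and $\square\varphi$ into $v$, giving $\boxdot\varphi\in v$ as required.

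For the backward direction I would argue by contrapositive: suppose $w\not\sqsubseteq_{\rm c}v$, so that $w\neq v$ and, moreover, there is some witness $\varphi$ with $\square\varphi\in w$ but $\varphi\notin v$. The main obstacle here is that $\square\varphi\in w$ does not on its own give $\boxdot\varphi\in w$, since $\varphi$ need not lie in $w$; one must first inflate the witness into something of $\boxdot$-form. The trick is to use $w\neq v$ to extract (up to replacing a formula by its negation) some $\chi\in w\setminus v$, and then take $\psi:=\varphi\vee\chi$. Then $\psi\in w$ because $\chi\in w$, and $\square\psi\in w$ because $\square\varphi\in w$ and $\square$ is monotone in $\mathbf{K}$, so $\boxdot\psi\in w$; on the other hand $\varphi\notin v$ and $\chi\notin v$, hence $\psi\notin v$ and in particular $\boxdot\psi\notin v$, contradicting the hypothesis.

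The entire argument lives inside $\mathbf{wK4}$, so nothing extra is needed from the dynamic axioms, and the result goes through uniformly for every $\Lambda$ extending $\mathbf{wK4C}$. The only non-routine step is the disjunction trick in the backward direction; everything else is a direct use of $\mathrm{w4}$ and the definition of the canonical accessibility relation.
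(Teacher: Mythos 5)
Your proposal is correct and follows essentially the same route as the paper's proof: the forward direction uses $\mathrm{w4}$ to push $\square\square\varphi$ into $w$ exactly as the paper does, and the backward direction is the paper's disjunction trick with a witness $\chi\in w\setminus v$, merely phrased contrapositively rather than directly. No gaps.
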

\begin{proof}
First assume that $w\sqsubseteq_{\rm c} v$.
Obviously if $w=v$ then $\boxdot\varphi \in w$ implies $\boxdot\varphi \in v$.
If $\boxdot\varphi \in w$ then from $\square\varphi\in w$ we obtain $\varphi\in v$ and by $\rm w4$ we have $\square\square\varphi\in w$ and so $\square\varphi \in v$.

Conversely, assume that $\boxdot\varphi \in w$ implies $\boxdot\varphi \in v$.
If $w = v$, then $w\sqsubseteq_{\rm c} v$.
Otherwise, $w\neq v$, and we can find $\psi \in w\setminus v$.
Suppose that $\square\varphi \in w$.
Then $\boxdot(\varphi\vee\psi )\in w$ and so $\boxdot(\varphi\vee\psi )\in v$. It follows that $\varphi\vee\psi\in v$.
But $\psi\not\in v $, so $\varphi\in v$; since $\varphi$ was arbitrary, then $w\sqsubset_{\rm c} v$, as required.
\end{proof}

The following version of Zorn’s lemma will be used to prove an important existence property.

\begin{lem}[Zorn's Lemma]\label{choice}
Let $(A,\leq)$ be a preordered set where $A$ is non-empty. A chain is a subset $\mathcal{C} \subseteq A$ whose elements are totally ordered by $\leq$. Suppose that every chain $\mathcal{C}$ has an upper bound in $A$. Then, $A$ has a $\leq$-maximal element.
\end{lem}

The following proves a sufficient condition for the existence of a final point accessible from a given point.

\begin{lem}\label{choice2}
If $\lozenge\varphi\in w$, then there is $\varphi$-final point $v$ such that $w \sqsubset  _{\rm c} v$.
\end{lem}

\begin{proof}
Suppose that $\lozenge\varphi\in w$ and let $A:={\sqsubset  _{\rm c}}(w) \cap \|\varphi\|$ (where ${\sqsubset  _{\rm c}}(w)  = \{v\in W_{\rm c}: w\sqsubset  _{\rm c} v \} $).
Note that $A$ is non-empty by Lemma~\ref{lemExist}, so in order to apply Zorn's lemma, consider a $\sqsubseteq_{\rm c}$-chain $\mathcal{C}$ in $A$.
We show that there is an upper bound of $\mathcal{C}$ that belongs to $A$.

Choose a formula $\theta$ as follows:
If $w$ is an irreflexive world (i.e.\ $w\not\sqsubset_{\rm c} w$), then choose $\theta$ so that $\square\theta\wedge \neg\theta \in w$; such a $\theta$ exists since otherwise the definition of $\sqsubset_{\rm c}$ would yield $w\sqsubset_{\rm c} w$.
Otherwise, $w$ is reflexive and we simply set $\theta=\top$.
Let $\Gamma$ be the set 
$$ \{\boxdot\psi:\exists x\in \mathcal{C}(\boxdot\psi\in x)\}.$$

Suppose $\boxdot\psi_1,\dots,\boxdot\psi_n\in \Gamma$.
For each $i$ there is $w_i\in \mathcal C$ such that $\boxdot\psi_i \in w_i$.
Since $\mathcal C$ is a chain, for some $j$ we have that $w_j =\max_{i=1}^n w_i$.
Then, using Lemma~\ref{lemmBoxDot} we see that $\varphi,\theta, \boxdot\psi_1,\dots,\boxdot\psi_n\in w_j$, hence $\{\varphi,\theta,\boxdot\psi_1,\dots,\boxdot\psi_n\}$ is consistent.
Since $\{ \boxdot\psi_1,\dots,\boxdot\psi_n\}$ is an arbitrary finite subset of $\Gamma$, we have that $\Gamma \cup \{\varphi,\theta\}$ is consistent.

By the Lindenbaum lemma we can extend $\Gamma \cup \{\varphi,\theta\}$ to an MCS, which we denote by $w^*$. Suppose $\boxdot\psi \in v\in\mathcal{C}$, then by construction $\boxdot\psi \in w^*$.  It follows by Lemma~\ref{lemmBoxDot} that $v\sqsubseteq  _{\rm c}w^*$.
Hence $w^*$ is an upper bound for $\mathcal C$.
Note that the assumption that $\lozenge\varphi\in w$ implies that there is some $v\sqsupset_{\rm c} w$ such that $\varphi\in v$, that is, $v\in A$.
Hence $w\sqsubset_{\rm c} v\sqsubseteq_{\rm c} w^*$, which yields $w \sqsubseteq_{\rm c} w^*$.
To see that indeed $w \sqsubset_{\rm c} w^*$, if $w$ is reflexive there is nothing to prove, otherwise since $\theta\in w^*$ we have that $w\neq w^*$.
Moreover $\varphi\in w^*$ by construction, so $w^*\in A$.

Thus, by Lemma~\ref{choice} we conclude that $A$ contains a maximal element, and this maximal element is clearly a $\varphi$-final point above $w$.
\end{proof}

We are now ready to prove the main result of this section regarding the existence of the finitary relation $\sqsubset_\Phi$.
The idea is that $\sqsubset_\Phi$ is a sub-relation of $\sqsubset  _{\rm c}$ that only chooses enough points to provide witnesses for any formula $\lozenge\varphi\in  \Phi$.
We also want $w\sqsubset_\Phi v$ to depend only on the cluster of $w$, except possibly in the case that $w=v$; in other words, $\sqsubseteq_\Phi$ will be cluster-invariant.
This will allow us to pick finite submodels of the canonical model, by including only the $\sqsubset_\Phi$-successors of each world rather than all of the $\sqsubset  _{\rm c}$-successors (which are typically infinitely many).

\begin{restatable}{lem}{finitary}
Let $\Lambda$ extend $\bold{wK4C}$ and $\Phi$ be a finite set of formulas closed under subformulas. There is an auxiliary relation $\sqsubset  _{\Phi} $ on the canonical model of $\Lambda$ such that:
\begin{enumerate}[(i)]
\item $\sqsubset  _{\Phi}$ is a subset of $\sqsubset  _{\rm c}$;
\item For each $w\in W$, the set ${\sqsubset  _{\Phi}}(w)$ is finite;
\item If $\lozenge\varphi\in w\cap\Phi$, then there exists $v\in W$ with $w\sqsubset  _{\Phi}v$ and $\varphi\in v$;
\item If $w \sqsubset  _{\rm c} v \sqsubset  _{\rm c} w$ then ${\sqsubset  _{\Phi}(w)} \subseteq {\sqsubseteq  _{\Phi}(v)}  $;
\item $\sqsubset  _{\Phi}$ is weakly transitive.
Moreover, if $\Lambda$ extends $\bold{K4C}$ then $\sqsubset  _{\Phi}$ is transitive, and if $\Lambda$ extends $\bold{GLC}$ then $\sqsubset  _{\Phi}$ is irreflexive.
\end{enumerate}
\end{restatable}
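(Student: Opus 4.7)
The plan is to construct $\sqsubset_\Phi$ by selecting, for each cluster of $\sqsubset_c$ and each relevant $\lozenge$-formula in $\Phi$, a single $\varphi$-final witness, and then closing under transitivity (intersected with $\sqsubset_c$).

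First, I will prove an auxiliary existence lemma: for every $w \in W_c$ and every $\lozenge\varphi \in w$, there exists a $\varphi$-final MCS $v$ with $w \sqsubset_c v$. The standard proof uses Zorn's lemma on the class of MCSs extending $\varphi$ that are $\sqsubseteq_c$-above $w$, ordered on clusters by $\sqsubseteq_c$. An upper bound of a chain is obtained by extending $\{\varphi\} \cup \{\boxdot\psi : \boxdot\psi \in u \text{ for some } u \text{ in the chain}\}$ to an MCS; consistency follows from the chain structure (every finite subset lies in a chain element), and Lemma \ref{lemmBoxDot} shows this MCS is $\sqsubseteq_c$-above every chain element. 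Any $\sqsubseteq_c$-maximal cluster is then $\varphi$-final. For $\bold{GLC}$, using the $\bold{GL}$-derivable formula $\lozenge\varphi \to \lozenge(\varphi \wedge \square\neg\varphi)$ I can additionally arrange that $\square\neg\varphi \in v$, forcing $v$ to be irreflexive with singleton cluster.

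Second, I will define $\sqsubset_\Phi$. Since members of the same cluster share all $\lozenge$-formulas, for each cluster $C$ and each $\lozenge\varphi \in \Phi$ satisfied at $C$, fix a single $\varphi$-final $v_{\varphi,C}$ with $C \sqsubset_c v_{\varphi,C}$; in the $\bold{GLC}$ case require moreover that $v_{\varphi,C}$ be irreflexive. Let $\sqsubset_{\Phi,0}$ be given by $w \sqsubset_{\Phi,0} v$ iff $v = v_{\varphi,C(w)}$ for some $\lozenge\varphi \in w \cap \Phi$, and set $\sqsubset_\Phi := \sqsubset_{\Phi,0}^+ \cap \sqsubset_c$. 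Properties (i) and (iii) are immediate; (iv) follows because $C(w)=C(v)$ implies $\sqsubset_{\Phi,0}(w) = \sqsubset_{\Phi,0}(v)$ and $\sqsubset_c(w)$ differs from $\sqsubset_c(v)$ by at most $\{w,v\}$; weak transitivity in (v) holds because a composition $w \sqsubset_\Phi u \sqsubset_\Phi z$ lies in $\sqsubset_{\Phi,0}^+ \cap \sqsubset_c$ by weak transitivity of $\sqsubset_c$; full transitivity in $\bold{K4C}$ follows because $\sqsubset_c$ is transitive there; and irreflexivity in $\bold{GLC}$ follows from the fact that $\square\neg\varphi \in v_{\varphi,C}$ forces $\lozenge\varphi \notin v_{\varphi,C}$, so the $\lozenge$-type $\{\lozenge\psi \in \Phi : \lozenge\psi \in \cdot\}$ strictly decreases along every $\sqsubset_{\Phi,0}$-step, precluding cycles.

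The main obstacle will be property (ii), finiteness of $\sqsubset_\Phi(w)$. In $\bold{GLC}$ this is tractable: the same strict decrease of the $\lozenge$-type bounds the depth of the witness tree by $|\Phi|$, so the tree with branching $\leq |\Phi|$ is finite. In $\bold{wK4C}$ and $\bold{K4C}$ the $\lozenge$-type is only weakly decreasing along $\sqsubset_c$ (by the $\bold{K4}$ principle $\lozenge\lozenge p \to \lozenge p$ or its weak variant), and a more refined argument is required. The approach I will take is to strengthen the selection so that whenever the cluster $C$ already contains a $\varphi$-element, $v_{\varphi,C}$ is chosen within $C$ itself (which is possible via Zorn maximality inside $C$ whenever $C$ is not an irreflexive singleton), making the selection self-stabilising in the sense that $v_{\varphi,C(v_{\varphi,C})} = v_{\varphi,C}$ and no new points are introduced. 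In the remaining case the witness moves to a strictly greater cluster and the $\lozenge$-type strictly decreases, bounding the non-stabilising depth of the witness tree by $|\Phi|$. This bookkeeping is the technical heart of the proof.
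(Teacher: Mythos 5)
Your overall strategy---Zorn's lemma to obtain $\varphi$-final witnesses, a per-cluster witness assignment, and a (weakly) transitive closure---is the same as the paper's, but two steps fail as written. First, a single witness $v_{\varphi,C}$ per pair $(\varphi,C)$ cannot secure property (iii). Consider a two-element cluster $C=\{u,z\}$ of irreflexive points with $u\sqsubset_{\rm c}z\sqsubset_{\rm c}u$, with $\varphi$ in both and no $\varphi$-point outside $C$ above it: then $u$ and $z$ are both $\varphi$-final and both contain $\lozenge\varphi$. Whichever of them you select as $v_{\varphi,C}$ (your self-stabilising clause forces the choice inside $C$), that point's only designated $\varphi$-witness is itself; since $\sqsubset_{\Phi}\subseteq\sqsubset_{\rm c}$ and the point is irreflexive, (iii) fails at it, and no detour through $\sqsubset_{\Phi,0}^+\cap\sqsubset_{\rm c}$ is guaranteed to repair this. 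This is exactly why the paper selects a \emph{second} witness $w'(\varphi,C)$, taken to be a $\varphi$-final point strictly $\sqsubset_{\rm c}$-above $w(\varphi,C)$ whenever $\lozenge\varphi\in w(\varphi,C)$. (Relatedly, your premise that points of one cluster share all $\lozenge$-formulas holds under ${\rm 4}$ but not under ${\rm w4}$, which only yields $\lozenge\lozenge\varphi\to\varphi\vee\lozenge\varphi$; the paper indexes witnesses by clusters $C$ with $\lozenge\varphi\in\bigcup C$ but requires $\lozenge\psi\in u$ of the individual point $u$ in the definition of $\sqsubset_{\Phi}^0$.)

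Second, your finiteness argument for $\mathbf{wK4C}$ and $\mathbf{K4C}$ rests on the claim that when the witness leaves its cluster the $\lozenge$-type strictly decreases. That is false: $\varphi$-finality of $v$ confines the $\varphi$-successors of $v$ to $C(v)$ but does not exclude them, so $\lozenge\varphi$ can persist at $v$ (for instance if $C(v)$ contains a second $\varphi$-point, or $v$ is reflexive). Hence the ``non-stabilising depth'' is not bounded by $|\Phi|$ by this reasoning, and the bookkeeping you defer is precisely where a different idea is needed. The paper argues instead: an infinite injective $\sqsubset_{\Phi}^0$-chain would contain infinitely many $\theta$-final points for a single $\theta\in\Phi$; the first such point $v_{i_0}$ forces all later ones into $C(v_{i_0})$ by $\theta$-finality, so infinitely many links of the chain land in the finite set $\sqsubset_{\Phi}^0(v_{i_0})$ (the witness assignment being cluster-invariant), contradicting injectivity; K\"onig's lemma then yields finiteness of $\sqsubset_{\Phi}(w)$. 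Your construction could likely be repaired along these lines, but as stated both (iii) and (ii) have genuine gaps.
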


\begin{proof} 
Let $C$ be any cluster of points in $W$ and define 
$$ { \sqsubset  _{\rm c}}(C)=\bigcup\{{\sqsubset  _{\rm c}}(v):v\in C\}.$$
 We construct the weakly transitive relation $\sqsubset  _{\Phi}$ as follows:
 Using Lemma~\ref{choice2} we use the axiom of choice to choose a function that for each formula $\varphi$ and each cluster $C$ such that $\lozenge\varphi \in \bigcup C$, assigns a $\varphi$-final point $w(\varphi,C)$ such that $w(\varphi,C)\in { \sqsubset  _{\rm c}}(C)$.
  We choose a second point $w'(\varphi,C)$, possibly equal to $w(\varphi,C)$, such that 
\begin{itemize}
 \item if $\lozenge\varphi\in w(\varphi,C)$, then $w'(\varphi,C)$ is any $\varphi$-final point such that $w(\varphi,C)\sqsubset  _{\rm c} w'(\varphi,C)$;
 \item otherwise, $w'(\varphi,C)=w(\varphi,C)$.
 \end{itemize}
Set $u \sqsubset  _{\Phi}^0 v$ iff $u\sqsubset_{\rm c} v$ and there exists $\psi\in\Phi$ such that $\lozenge\psi\in u$ and $$v\in\{w(\psi,C(u)),w'(\psi,C(u))\}.$$
 Let $\sqsubset  _{\Phi}$ be the weakly transitive closure of $\sqsubset  _{\Phi}^0$.

 It is clear that (i), (iii) and (iv) follow directly from the construction: (i) follows from the fact that $\sqsubset_\Phi$ is the transitive closure of $\sqsubset^0_\Phi$; (iii) follows from Lemma~\ref{choice2}; (iv) follows from the fact that $v\in C(w)$ and by assuming that $v\sqsubseteq_\Phi u$ and unraveling the definition of $\sqsubseteq_\Phi$.

We continue to verify conditions (ii) and (v). 
For condition (ii), first observe that ${\sqsubset_{\Phi}^0}(u)$ is finite by construction, as it contains at most two elements for each $\varphi\in \Phi$.
Now, if $u \sqsubset  _{\Phi}v$ then there is a sequence
$$ u\sqsubset  _{\Phi}^0v_1\sqsubset  _{\Phi}^0\dots \sqsubset  _{\Phi}^0v_n=v.$$
By taking a minimal such sequence, we may assume that it is injective, i.e.\ each element appears only once in the sequence.
Consider the tree consisting of all such sequences (ordered by the initial segment relation).
This is a finitely-branching tree, as ${\sqsubset_{\Phi}^0}(x)$ is always finite.
Moreover, if ${\sqsubset_{\Phi}}(u)$ is infinite, then this tree is infinite.
By K\"onig's lemma, there is an infinite sequence
$$ u \sqsubset  _{\Phi}^0v_1\sqsubset  _{\Phi}^0 v_2\sqsubset  _{\Phi}^0 \dots .$$

By definition of $\sqsubset_{\Phi}^0$, for each $i \in  \omega $ there is $\varphi_i \in \Phi$ such that $v_{i+1}$ is $\varphi_i$-final.
Since ${\sqsubset_{\Phi}^0} \subseteq {\sqsubset_{\rm c}}$, we have that $v_{i} \sqsubseteq_{\rm c} v_j$ whenever $i\leq j$.
Since $\Phi$ is finite, there is some $\theta\in\Phi$ such that $v_{i}$ is $\theta$-final for infinitely many values of $i$.
Let $i_0$ be the least such value.
If $i>i_0$ is any other such value, $v_{i_0} \sqsubset_{\rm c} v_i$ together with $\theta$-finality of $v_{i_0}$ yields $v_i \sqsubset_{\rm c} v_{i_0}$.
Thus $v_i\in C(v_{i_0})$ and $ v_{i} \sqsubset_{\Phi}^0 v_{i+1}$, which by definition of $\sqsubset^0_{\Phi}$ yields $  v_{i+1} \in {\sqsubset_{\Phi}^0} (v_{i_0} )$.
But ${\sqsubset_{\Phi}^0} (v_{i_0} )$ is finite, contradicting that the chain is infinite and injective.

Finally, we verify condition (v).
The relation $\sqsubset_\Phi$ is weakly transitive by definition, and if $\Lambda$ extends $\mathbf{K4C}$, we have that $x\sqsubset_\Phi y \sqsubset_\Phi z$ implies $x\sqsubseteq_\Phi z$, which in the case that $x=z$ implies that $x\sqsubset_{\rm c} x$ and also that $x,y$ are in the same cluster.
This together with $y \sqsubset_\Phi z $ implies that $x\sqsubset_\Phi z$, as $  \sqsubset_\Phi  $ is cluster-invariant.

Now let $\Lambda$ extend $\mathbf{GLC}$, and suppose that $w$ is $\varphi$-final.
We claim that $\lozenge \varphi \not \in w$, which immediately yields $w\not\sqsubset_\Phi w$, as needed. By the contrapositive of $\rm L$, we have that $\lozenge\varphi\to \lozenge(\varphi\wedge\square\neg\varphi ) \in w$.
If $\lozenge \varphi \in w$, then $\lozenge( \varphi\wedge\square\neg\varphi ) \in w$.
It follows that there is $v\sqsupset_\Phi w$ with $\varphi\wedge\square\neg\varphi \in v$.
But $\square\neg\varphi \in v$ implies that $v\not\sqsubset_\Phi w$, contradicting the $\varphi$-finality of $w$.
So $\lozenge \varphi \notin w$ and $w$ is irreflexive, as required.
\end{proof}

\section{Stories and $\Phi$-morphisms}
In this section we show that the logics $\bold{wK4C}$, $\bold{K4C}$ and $\bold{GLC}$ have the finite model property by constructing finite models and truth preserving maps from these models to the canonical model. 

If $\sqsubset$ is a weakly transitive relation on $A$, $\lb A,\sqsubset \rb$ is tree-like if whenever $a\sqsubseteq c$ and $b\sqsubseteq c$, it follows that $a\sqsubseteq b$ or $b\sqsubseteq a$.
We will use labelled tree-like structures called {\em moments} to record the `static' information at a point; that is, the structure involving $\sqsubset$, but not $f$.

\begin{defi}[moment]
A {\em $\Lambda$-moment} is a structure $\mathfrak m = \langle |\mathfrak m|,\sqsubset  _\mathfrak m,\nu_\mathfrak m,r_\moment \rangle $, where $\langle |\mathfrak m|,\sqsubset  _\mathfrak m \rangle$ is a finite tree-like $\Lambda$ frame with a root $r_\moment$, and $\nu_\mathfrak m$ is a valuation on $|\mathfrak m|$.\end{defi}

In order to also record `dynamic' information, i.e.~information involving the transition function, we will stack up several moments together to form a `story'.
Below, $ \bigsqcup$ denotes a disjoint union.

\begin{defi}[story]
A {\em story (with duration $I$)} is a structure  $\gog = \langle|\gog|,\sqsubset  _\gog,f_\gog,\nu_\gog,r_\gog \rangle$ such that there are $I< \omega$, $\Lambda$-moments $\gog_i = \langle |\gog_i|,\sqsubset  _i,\nu_i,r_i \rangle$ for each $i\leq I$, and functions $(f_i)_{i<I}$ such that:
\begin{enumerate}

\item $|\gog| = \bigsqcup_{i\leq I} |\gog_i| $;

\item $\sqsubset  _\gog = \bigsqcup_{i\leq I} \sqsubset  _i $;

\item $\nu_\gog(p) = \bigsqcup_{i\leq I} \nu_i(p) $ for each variable $p$;

\item $r_\gog =r_0$;

\item $f_\gog = {\rm Id}_{I} \cup \bigsqcup_{i <I} f_i $ with $f_i\colon |\gog_i| \to |\gog_{i+1}|$ being a weakly monotonic map such that $f_i(r_i) = r_{i+1}$ for all $i<I$ (we say that $f_i$ is {\em root preserving}), and ${\rm Id}_{I}$ is the identity on $|\gog_I|$.
\end{enumerate}
\end{defi}
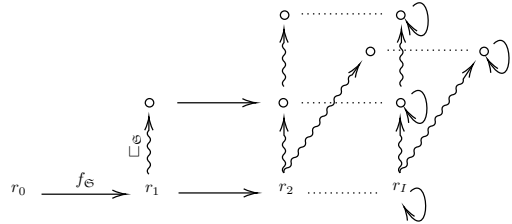
\begin{figure}[H]
\centering
\scalebox{0.63}{%
\tikzset{every picture/.style={line width=0.75pt}} 

\begin{tikzpicture}[x=0.75pt,y=0.75pt,yscale=-1,xscale=1]

\draw    (458.5,199) .. controls (475.25,182.26) and (475.5,246.04) .. (454.48,212.6) ;
\draw [shift={(453.5,211)}, rotate = 419.26] [color={rgb, 255:red, 0; green, 0; blue, 0 }  ][line width=0.75]    (10.93,-3.29) .. controls (6.95,-1.4) and (3.31,-0.3) .. (0,0) .. controls (3.31,0.3) and (6.95,1.4) .. (10.93,3.29)   ;
\draw    (166.5,201) -- (231.5,201) ;
\draw [shift={(233.5,201)}, rotate = 180] [color={rgb, 255:red, 0; green, 0; blue, 0 }  ][line width=0.75]    (10.93,-3.29) .. controls (6.95,-1.4) and (3.31,-0.3) .. (0,0) .. controls (3.31,0.3) and (6.95,1.4) .. (10.93,3.29)   ;
\draw    (251.5,185) .. controls (249.83,183.33) and (249.83,181.67) .. (251.5,180) .. controls (253.17,178.33) and (253.17,176.67) .. (251.5,175) .. controls (249.83,173.33) and (249.83,171.67) .. (251.5,170) .. controls (253.17,168.33) and (253.17,166.67) .. (251.5,165) .. controls (249.83,163.33) and (249.83,161.67) .. (251.5,160) .. controls (253.17,158.33) and (253.17,156.67) .. (251.5,155) .. controls (249.83,153.33) and (249.83,151.67) .. (251.5,150) -- (251.5,149) -- (251.5,141) ;
\draw [shift={(251.5,139)}, rotate = 450] [color={rgb, 255:red, 0; green, 0; blue, 0 }  ][line width=0.75]    (10.93,-3.29) .. controls (6.95,-1.4) and (3.31,-0.3) .. (0,0) .. controls (3.31,0.3) and (6.95,1.4) .. (10.93,3.29)   ;
\draw    (358.5,184) .. controls (356.83,182.33) and (356.83,180.67) .. (358.5,179) .. controls (360.17,177.33) and (360.17,175.67) .. (358.5,174) .. controls (356.83,172.33) and (356.83,170.67) .. (358.5,169) .. controls (360.17,167.33) and (360.17,165.67) .. (358.5,164) .. controls (356.83,162.33) and (356.83,160.67) .. (358.5,159) .. controls (360.17,157.33) and (360.17,155.67) .. (358.5,154) -- (358.5,150) -- (358.5,142) ;
\draw [shift={(358.5,140)}, rotate = 450] [color={rgb, 255:red, 0; green, 0; blue, 0 }  ][line width=0.75]    (10.93,-3.29) .. controls (6.95,-1.4) and (3.31,-0.3) .. (0,0) .. controls (3.31,0.3) and (6.95,1.4) .. (10.93,3.29)   ;
\draw    (450.5,184) .. controls (448.83,182.33) and (448.83,180.67) .. (450.5,179) .. controls (452.17,177.33) and (452.17,175.67) .. (450.5,174) .. controls (448.83,172.33) and (448.83,170.67) .. (450.5,169) .. controls (452.17,167.33) and (452.17,165.67) .. (450.5,164) .. controls (448.83,162.33) and (448.83,160.67) .. (450.5,159) .. controls (452.17,157.33) and (452.17,155.67) .. (450.5,154) -- (450.5,150) -- (450.5,142) ;
\draw [shift={(450.5,140)}, rotate = 450] [color={rgb, 255:red, 0; green, 0; blue, 0 }  ][line width=0.75]    (10.93,-3.29) .. controls (6.95,-1.4) and (3.31,-0.3) .. (0,0) .. controls (3.31,0.3) and (6.95,1.4) .. (10.93,3.29)   ;
\draw    (252,128) ;
\draw [shift={(252,128)}, rotate = 0] [color={rgb, 255:red, 0; green, 0; blue, 0 }  ][line width=0.75]      (0, 0) circle [x radius= 3.35, y radius= 3.35]   ;
\draw    (358,128) ;
\draw [shift={(358,128)}, rotate = 0] [color={rgb, 255:red, 0; green, 0; blue, 0 }  ][line width=0.75]      (0, 0) circle [x radius= 3.35, y radius= 3.35]   ;
\draw    (358.5,115) .. controls (356.83,113.33) and (356.83,111.67) .. (358.5,110) .. controls (360.17,108.33) and (360.17,106.67) .. (358.5,105) .. controls (356.83,103.33) and (356.83,101.67) .. (358.5,100) .. controls (360.17,98.33) and (360.17,96.67) .. (358.5,95) .. controls (356.83,93.33) and (356.83,91.67) .. (358.5,90) .. controls (360.17,88.33) and (360.17,86.67) .. (358.5,85) .. controls (356.83,83.33) and (356.83,81.67) .. (358.5,80) -- (358.5,79) -- (358.5,71) ;
\draw [shift={(358.5,69)}, rotate = 450] [color={rgb, 255:red, 0; green, 0; blue, 0 }  ][line width=0.75]    (10.93,-3.29) .. controls (6.95,-1.4) and (3.31,-0.3) .. (0,0) .. controls (3.31,0.3) and (6.95,1.4) .. (10.93,3.29)   ;
\draw    (359,58) ;
\draw [shift={(359,58)}, rotate = 0] [color={rgb, 255:red, 0; green, 0; blue, 0 }  ][line width=0.75]      (0, 0) circle [x radius= 3.35, y radius= 3.35]   ;
\draw    (451,128) ;
\draw [shift={(451,128)}, rotate = 0] [color={rgb, 255:red, 0; green, 0; blue, 0 }  ][line width=0.75]      (0, 0) circle [x radius= 3.35, y radius= 3.35]   ;
\draw    (451.5,115) .. controls (449.83,113.33) and (449.83,111.67) .. (451.5,110) .. controls (453.17,108.33) and (453.17,106.67) .. (451.5,105) .. controls (449.83,103.33) and (449.83,101.67) .. (451.5,100) .. controls (453.17,98.33) and (453.17,96.67) .. (451.5,95) .. controls (449.83,93.33) and (449.83,91.67) .. (451.5,90) .. controls (453.17,88.33) and (453.17,86.67) .. (451.5,85) .. controls (449.83,83.33) and (449.83,81.67) .. (451.5,80) -- (451.5,79) -- (451.5,71) ;
\draw [shift={(451.5,69)}, rotate = 450] [color={rgb, 255:red, 0; green, 0; blue, 0 }  ][line width=0.75]    (10.93,-3.29) .. controls (6.95,-1.4) and (3.31,-0.3) .. (0,0) .. controls (3.31,0.3) and (6.95,1.4) .. (10.93,3.29)   ;
\draw    (451,58) ;
\draw [shift={(451,58)}, rotate = 0] [color={rgb, 255:red, 0; green, 0; blue, 0 }  ][line width=0.75]      (0, 0) circle [x radius= 3.35, y radius= 3.35]   ;
\draw    (358.5,184) .. controls (358.07,181.68) and (359.02,180.31) .. (361.34,179.88) .. controls (363.66,179.46) and (364.61,178.09) .. (364.18,175.77) .. controls (363.75,173.45) and (364.7,172.08) .. (367.02,171.65) .. controls (369.33,171.22) and (370.28,169.85) .. (369.85,167.54) .. controls (369.42,165.22) and (370.37,163.85) .. (372.69,163.42) .. controls (375.01,162.99) and (375.96,161.62) .. (375.53,159.3) .. controls (375.1,156.98) and (376.05,155.61) .. (378.37,155.19) .. controls (380.69,154.76) and (381.64,153.39) .. (381.21,151.07) .. controls (380.78,148.75) and (381.73,147.38) .. (384.05,146.96) .. controls (386.37,146.53) and (387.32,145.16) .. (386.89,142.84) .. controls (386.46,140.52) and (387.41,139.15) .. (389.73,138.72) .. controls (392.04,138.29) and (392.99,136.92) .. (392.56,134.61) .. controls (392.13,132.29) and (393.08,130.92) .. (395.4,130.49) .. controls (397.72,130.07) and (398.67,128.7) .. (398.24,126.38) .. controls (397.81,124.06) and (398.76,122.69) .. (401.08,122.26) .. controls (403.4,121.83) and (404.35,120.46) .. (403.92,118.14) .. controls (403.49,115.82) and (404.44,114.45) .. (406.76,114.03) .. controls (409.08,113.6) and (410.03,112.23) .. (409.6,109.91) .. controls (409.17,107.59) and (410.11,106.22) .. (412.43,105.79) -- (412.82,105.23) -- (417.36,98.65) ;
\draw [shift={(418.5,97)}, rotate = 484.59] [color={rgb, 255:red, 0; green, 0; blue, 0 }  ][line width=0.75]    (10.93,-3.29) .. controls (6.95,-1.4) and (3.31,-0.3) .. (0,0) .. controls (3.31,0.3) and (6.95,1.4) .. (10.93,3.29)   ;
\draw    (450.5,184) .. controls (450.03,181.69) and (450.96,180.3) .. (453.27,179.84) .. controls (455.58,179.38) and (456.51,177.99) .. (456.05,175.68) .. controls (455.58,173.37) and (456.51,171.98) .. (458.82,171.52) .. controls (461.13,171.06) and (462.06,169.67) .. (461.59,167.36) .. controls (461.13,165.05) and (462.06,163.66) .. (464.37,163.2) .. controls (466.68,162.74) and (467.61,161.35) .. (467.14,159.04) .. controls (466.67,156.73) and (467.6,155.34) .. (469.91,154.88) .. controls (472.22,154.42) and (473.15,153.03) .. (472.69,150.72) .. controls (472.22,148.41) and (473.15,147.02) .. (475.46,146.56) .. controls (477.77,146.1) and (478.7,144.71) .. (478.24,142.4) .. controls (477.77,140.09) and (478.7,138.7) .. (481.01,138.24) .. controls (483.32,137.78) and (484.25,136.39) .. (483.78,134.08) .. controls (483.32,131.77) and (484.25,130.38) .. (486.56,129.92) .. controls (488.87,129.46) and (489.8,128.07) .. (489.33,125.76) .. controls (488.86,123.45) and (489.79,122.06) .. (492.1,121.6) .. controls (494.41,121.14) and (495.34,119.75) .. (494.88,117.44) .. controls (494.41,115.13) and (495.34,113.74) .. (497.65,113.28) .. controls (499.96,112.82) and (500.89,111.43) .. (500.42,109.12) -- (502.95,105.32) -- (507.39,98.66) ;
\draw [shift={(508.5,97)}, rotate = 483.69] [color={rgb, 255:red, 0; green, 0; blue, 0 }  ][line width=0.75]    (10.93,-3.29) .. controls (6.95,-1.4) and (3.31,-0.3) .. (0,0) .. controls (3.31,0.3) and (6.95,1.4) .. (10.93,3.29)   ;
\draw  [dash pattern={on 0.84pt off 2.51pt}]  (377.5,200) -- (432.5,200) ;
\draw  [dash pattern={on 0.84pt off 2.51pt}]  (372.5,57) -- (437.5,57) ;
\draw  [dash pattern={on 0.84pt off 2.51pt}]  (372.5,128) -- (437.5,128) ;
\draw    (427,87) ;
\draw [shift={(427,87)}, rotate = 0] [color={rgb, 255:red, 0; green, 0; blue, 0 }  ][line width=0.75]      (0, 0) circle [x radius= 3.35, y radius= 3.35]   ;
\draw    (517,87) ;
\draw [shift={(517,87)}, rotate = 0] [color={rgb, 255:red, 0; green, 0; blue, 0 }  ][line width=0.75]      (0, 0) circle [x radius= 3.35, y radius= 3.35]   ;
\draw  [dash pattern={on 0.84pt off 2.51pt}]  (440.5,86) -- (505.5,86) ;
\draw    (274,128) -- (334.5,128) ;
\draw [shift={(336.5,128)}, rotate = 180] [color={rgb, 255:red, 0; green, 0; blue, 0 }  ][line width=0.75]    (10.93,-3.29) .. controls (6.95,-1.4) and (3.31,-0.3) .. (0,0) .. controls (3.31,0.3) and (6.95,1.4) .. (10.93,3.29)   ;
\draw    (524.5,81) .. controls (541.25,64.26) and (541.5,128.04) .. (520.48,94.6) ;
\draw [shift={(519.5,93)}, rotate = 419.26] [color={rgb, 255:red, 0; green, 0; blue, 0 }  ][line width=0.75]    (10.93,-3.29) .. controls (6.95,-1.4) and (3.31,-0.3) .. (0,0) .. controls (3.31,0.3) and (6.95,1.4) .. (10.93,3.29)   ;
\draw    (460.5,51) .. controls (477.25,34.25) and (477.5,98.04) .. (456.48,64.6) ;
\draw [shift={(455.5,63)}, rotate = 419.26] [color={rgb, 255:red, 0; green, 0; blue, 0 }  ][line width=0.75]    (10.93,-3.29) .. controls (6.95,-1.4) and (3.31,-0.3) .. (0,0) .. controls (3.31,0.3) and (6.95,1.4) .. (10.93,3.29)   ;
\draw    (459.5,121) .. controls (476.25,104.26) and (476.5,168.04) .. (455.48,134.6) ;
\draw [shift={(454.5,133)}, rotate = 419.26] [color={rgb, 255:red, 0; green, 0; blue, 0 }  ][line width=0.75]    (10.93,-3.29) .. controls (6.95,-1.4) and (3.31,-0.3) .. (0,0) .. controls (3.31,0.3) and (6.95,1.4) .. (10.93,3.29)   ;
\draw    (275,200) -- (335.5,200) ;
\draw [shift={(337.5,200)}, rotate = 180] [color={rgb, 255:red, 0; green, 0; blue, 0 }  ][line width=0.75]    (10.93,-3.29) .. controls (6.95,-1.4) and (3.31,-0.3) .. (0,0) .. controls (3.31,0.3) and (6.95,1.4) .. (10.93,3.29)   ;

\draw (247,192.4) node [anchor=north west][inner sep=0.75pt]    {$r_{1}$};
\draw (353,191.4) node [anchor=north west][inner sep=0.75pt]    {$r_{2}$};
\draw (443,190.4) node [anchor=north west][inner sep=0.75pt]    {$r_{I}$};
\draw (141,192.4) node [anchor=north west][inner sep=0.75pt]    {$r_{0}$};
\draw (192,182.4) node [anchor=north west][inner sep=0.75pt]    {$f_{\mathfrak S }$};
\draw (234.5,173.5) node [anchor=north west][inner sep=0.75pt]  [rotate=-270]  {$\sqsubset _{\mathfrak S }$};

\end{tikzpicture}
}
\vspace{-2mm}
\caption[story]{An example of a $\bold{GL}$-story. The squiggly arrows represent the relation $\sqsubset_{\mathfrak  S}$ while the straight  arrows represent the function $f_{\mathfrak  S}$. Each vertical slice represents a $\bold{GL}$-moment. In the case of other types of stories, we may also have clusters besides singletons.}
\end{figure}
We often omit the subindices $\mathfrak m$ or $\gog$ when this does not lead to confusion.
We may also assign different notations to the components of a moment, so that if we write $\mathfrak m= \langle W,\sqsubset   ,\nu,x\rangle $, it is understood that $W=|\mathfrak m|$, ${\sqsubset  }= {\sqsubset  _\mathfrak m}$, etc.

Recall that a {\em $p$-morphism} between Kripke models is a type of map that preserves validity.
It can be defined in the context of dynamic derivative frames as follows:

\begin{defi}[dynamic $p$-morphism]
Let $\mathfrak M=\langle W_\mathfrak M, \sqsubset   _\mathfrak M, g_\mathfrak M  \rangle$ and $\mathfrak N=\langle W_\mathfrak N, \sqsubset   _\mathfrak N, g_\mathfrak N  \rangle$ be dynamic derivative frames. Let $\pi\colon W_\mathfrak M\to W_\mathfrak N$.
We say that $\pi$ is a {\em dynamic $p$-morphism} if $w\sqsubset  _\mathfrak M v$ implies that $\pi(w) \sqsubset  _\mathfrak N\pi(v)$, $\pi(w)\sqsubset  _\mathfrak N u$ implies that there is $v\sqsupset_\mathfrak M w$ with $\pi(v) = u$, and $\pi\circ g_\mathfrak M =g_\mathfrak N\circ \pi$.
\end{defi}

It is then standard that if $\pi\colon W_\mathfrak M\to W_\mathfrak N$ is a surjective, dynamic $p$-morphism, then any formula valid on $\mathfrak M$ is also valid on $\mathfrak N$.
However, our relation $\sqsubset  _{\Phi}$ will allow us to weaken these conditions and still obtain maps that preserve the truth of (some) formulas.

\begin{defi}[$\Phi$-morphism]\label{phimor}
Fix a logic $\Lambda$ and let $\mathfrak M^\Lambda_{\rm c} = \langle W_{\rm c},\sqsubset_{\rm c},g_{\rm c},\nu_{\rm c}\rangle$ and $\mathfrak S$ be a story of duration $I$.
A map ${\pi}:|{\gog}|\rightarrow W_{\rm c}$ is called a \emph{dynamic $\Phi$-morphism} if for all $x\in |{\gog}|$ the following conditions are satisfied:
\begin{enumerate}
\item $x\in {\nu_\mathfrak S} (p)  \iff p \in {\pi}(x)$;
\item\label{itCommutes}  If $x\in|\gog_i|$ for some $i<I$, then $g_{\rm c}( {\pi}(x))={\pi}( f_{\gog}(x))$;
\item \label{itForth} If $ x \sqsubset  _\gog y$ then $\pi(x) \sqsubset  _{\rm c}  \pi(y)$; 
\item\label{itBack} If ${\pi}(x)\sqsubset  _{\Phi}v$ for some $v\in W_{\rm c}$, then there exists $y\in|{\gog}|$ such that
$x \sqsubset  _\gog y  \text{ and } v={\pi}(y).$
\end{enumerate}
If we drop condition~\ref{itCommutes}, we say that $\pi$ is a {\em $\Phi$-morphism;} the latter notion will mostly be applied to moments, viewed as stories of duration one.
\end{defi}

We now show that a dynamic $\Phi$-morphism $\pi$ preserves the truth of formulas of suitable $\bc$-depth, where the latter is defined as usual in terms of nested occurrences of $\bc$ in a formula $\varphi$.

\begin{restatable}[truth preservation]
{lem}{truth}Let $\mathfrak S$ be a story of duration $I$ and $x \in |\mathfrak S_0|$. Let $\pi$ be a dynamic $\Phi$-morphism to the canonical model of some normal logic $\Lambda$ over $\mathcal L^\circ_\dd$.
Suppose that $\varphi \in \Phi$ is a formula of $\bc$-depth at most $I$. Then $\varphi\in\pi(x)$ iff $x\in \| \varphi\|_\mathfrak S$.
\end{restatable}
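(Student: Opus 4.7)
The natural approach is induction on the structure of $\varphi$. To make the inductive hypothesis survive the $\bc$-case, I would strengthen the statement and prove by induction on $\varphi$ that for every $i\le I$, every $x\in|\mathfrak S_i|$, and every $\varphi\in\Phi$ of $\bc$-depth at most $I-i$, one has $\varphi\in\pi(x)\iff x\in\|\varphi\|_\mathfrak S$. The original claim is the case $i=0$.

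The base case $\varphi=p$ is immediate from clause~1 of Definition~\ref{phimor}, and the Boolean cases are routine using the fact that $\pi(x)\in W_{\rm c}$ is a maximal $\Lambda$-consistent set. For $\varphi=\bc\psi$, the $\bc$-depth bound forces $i<I$, so clause~\ref{itCommutes} yields $\pi(f_\mathfrak S(x))=g_{\rm c}(\pi(x))$; hence $\bc\psi\in\pi(x)$ iff $\psi\in\pi(f_\mathfrak S(x))$, and since $f_\mathfrak S(x)\in|\mathfrak S_{i+1}|$ and $\psi$ has $\bc$-depth at most $I-(i+1)$, the inductive hypothesis applies, giving $\psi\in\pi(f_\mathfrak S(x))\iff f_\mathfrak S(x)\in\|\psi\|_\mathfrak S\iff x\in f_\mathfrak S^{-1}(\|\psi\|_\mathfrak S)=\|\bc\psi\|_\mathfrak S$.

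The interesting case is $\varphi=\lozenge\psi$, where $\psi\in\Phi$ by subformula closure and has the same $\bc$-depth as $\varphi$. For the $(\Rightarrow)$ direction, from $\lozenge\psi\in\pi(x)\cap\Phi$ the finitary existence property (clause~(iii) of the previous lemma) produces some $v\in W_{\rm c}$ with $\pi(x)\sqsubset_\Phi v$ and $\psi\in v$; the back condition~\ref{itBack} of Definition~\ref{phimor} then supplies $y\in|\mathfrak S|$ with $x\sqsubset_\mathfrak S y$ and $\pi(y)=v$, and since $\sqsubset_\mathfrak S=\bigsqcup_i\sqsubset_i$ the point $y$ lies in $|\mathfrak S_i|$, so the inductive hypothesis yields $y\in\|\psi\|_\mathfrak S$ and hence $x\in\|\lozenge\psi\|_\mathfrak S$. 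For the $(\Leftarrow)$ direction, given $y\in|\mathfrak S_i|$ with $x\sqsubset_\mathfrak S y$ and $y\in\|\psi\|_\mathfrak S$, the forth condition~\ref{itForth} gives $\pi(x)\sqsubset_{\rm c}\pi(y)$; by the inductive hypothesis $\psi\in\pi(y)$, and by standard canonical-model reasoning (if $\lozenge\psi\notin\pi(x)$ then $\square\neg\psi\in\pi(x)$ and so $\neg\psi\in\pi(y)$, a contradiction) we conclude $\lozenge\psi\in\pi(x)$.

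The main subtlety is the asymmetry of the two directions in the $\lozenge$-case: the forth clause is phrased for $\sqsubset_\mathfrak S/\sqsubset_{\rm c}$, while the back clause is phrased for $\sqsubset_\Phi$. This is exactly what the finitary relation was designed to support, since $\sqsubset_\Phi\subseteq\sqsubset_{\rm c}$ retains the existence lemma for formulas in $\Phi$ while being finitely branching. The rest of the proof is a careful but standard bookkeeping of subformulas, $\bc$-depths, and time slices.
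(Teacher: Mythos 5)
Your proposal is correct and follows essentially the same route as the paper: the same strengthened induction hypothesis indexed by the time slice $i$ and the residual $\bc$-depth $I-i$, the same use of clause (iii) of the finitary-relation lemma together with the back condition for the $\lozenge$-case, and the same commutation argument for $\bc$. The only cosmetic difference is that the paper handles the remaining direction via the dual $\square$-case rather than arguing both directions of $\lozenge$ directly, which amounts to the same thing.
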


\begin{proof}
We prove the more general claim that if $\varphi$ has $\bc$-depth at most $I-j$ and $x\in |\mathfrak S_j|$, then $\varphi\in\pi(x)$ iff $x\in \| \varphi\|_\mathfrak S$.
We proceed by induction on the complexity of $\varphi$.
For the base case, suppose $p\in \pi(x)$. Then by the definition of a dynamic $\Phi$-morphism we have $x\in\nu_\gog(p)$ as required. The other direction follows similarly, and since the Boolean connectives follow by standard arguments, then we focus on modalities.

If $\lozenge\varphi\in\pi(x)$ then there exists $v\in W_{\rm c}$ such that $\pi(x) \sqsubset  _{\Phi}  v$ and $\varphi\in v$. By the definition of a dynamic $\Phi$-morphism there exists $v'\in|{\gog}|$ such that $x \sqsubset_\gog v'$ and $v= \pi(v')$. By the induction hypothesis $v'\in\|\varphi\|_\gog$ and thus $x\in \|\lozenge\varphi\|_\gog$.

If $\lozenge\varphi\notin\pi(x)$ and $y\sqsupset_\mathfrak S x$ then $\pi(y)\sqsupset_\mathrm c \pi(x)$, so $\neg\varphi \in \pi(y)$ and the induction hypothesis and the fact that $y\in |\mathfrak S_j|$ (as $\mathfrak S_j$ is closed under $\sqsubset  _\mathfrak S$) yields $y  \in \|\neg\varphi\|_\mathfrak S$.
Since $y$ was arbitrary, $x  \in \|\neg\lozenge \varphi\|_\gog$.


If $\bc\varphi\in\pi(x)$ then $\varphi\in g_{\rm c}(\pi(x))$. By the definition of a dynamic $\Phi$-morphism we get $g_{\rm c}(\pi(x)) = \pi(f_\gog (x))$.
Note that $f_\gog (x) \in |\mathfrak S_{j+1}| $, but the $\bc$-depth of $\varphi$ is bounded by $I-j-1$.
From the induction hypothesis we get $ f_\gog (x)\in\| \varphi\|_\gog$. Hence by definition $x\in\| \bc\varphi\|_\gog$. 

If $x\in \| \bc\varphi\|_\gog$, then $f_\gog (x) \in \| \varphi\|_\gog$.
The bound on $\bc$-depth can be checked as above, so from the induction hypothesis it follows that $\varphi\in\pi(f_\gog (x))$ and by the definition of a dynamic $\Phi$-morphism we get $\pi(f_\gog(x))=g_{\rm c}(\pi(x))$. Therefore $\bc\varphi\in \pi(x)$.
\end{proof}

We next demonstrate that for every point $w$ in the canonical model, there exists a   $\Lambda$-moment $\mathfrak m$ for which there is a  $\Phi$-morphism mapping $\mathfrak m$ to $w$. 
In order to do this, we define a procedure for constructing new moments from smaller ones.

\begin{defi}[moment construction]
Let $\Lambda\in \{\bold{wK4},\bold{K4},\bold{GL}\}$ and $ {C}'\cup\{x\} \subseteq C(x )$ for some $x $ in the canonical model $\mathfrak{M}^\Lambda_\mathrm c$.
Let $\vec{\mathfrak{a}}=\lb \mathfrak{a}_m\rb_{m<N}$ be a sequence of moments.
We define a structure $\noment =  {{\vec{\mathfrak{a}}}\choose {C'}}_x$ as follows:
\begin{enumerate}
\item  $|\noment| = C'  \sqcup \bigsqcup_{m<N}| \mathfrak{a}_m | ;$

\item $y\sqsubset  _\noment z $ if either
\begin{itemize}
\item $y,z \in C' $, $\Lambda\neq\bold{GL}$ and $y\sqsubset  _{\rm c}z$,

\item  $y \in C' $ and $z \in |\mathfrak{a}_m| $ for some $m$, or

\item $y,z\in|\mathfrak{a}_m|$ and $y\sqsubset  _{\mathfrak{a}_m} z$ for some $m$;
\end{itemize}
\item $\nu_\noment (p)=\{x\in C':  x\in \nu_{\rm c}(p)\} 
\sqcup\bigsqcup_{m<N}
\nu_{\mathfrak{a}_m}(p)$;

\item $r_\noment = x$.
\end{enumerate}
\end{defi}

We remark that $N=0$ is allowed, in which case the construction outputs only the root cluster.
The moment construction for some logic $\Lambda$ can be used to produce $\Lambda$-moments.

\begin{lem}\label{storyide}
Let $\Lambda\in \{\bold{wK4},\bold{K4},\bold{GL}\}$ and $ {C}' \subseteq C(x )$ for some $x $ in the canonical model $\mathfrak{M}^\Lambda_\mathrm c$, where $C'$ is a singleton if $\Lambda = \bold{GL}$ and finite otherwise.
Let $\vec{\mathfrak{a}}=\lb \mathfrak{a}_m\rb_{m<N}$ be a sequence of $\Lambda$-moments.
Then, ${\vec{\mathfrak{a}}\choose C}_x$ is a $\Lambda$-moment.
\end{lem}

\begin{proof}
If $\Lambda= \bold{wK4}$ then $\sqsubset  $ is easily seen to be weakly transitive since each $\mathfrak{a}_i$ is weakly transitive and the root sees all other points. The transitivity of $\sqsubset  $ where $\Lambda\supseteq \bold{K4}$ follows by the same reasoning. The irreflexivity of $\sqsubset  $ where $\Lambda= \bold{GL}$ follows from the fact that $C'$ is an irreflexive singleton and each $\mathfrak{a}_i$ is irreflexive.
 \end{proof}

 We use this moment construction to show the existence of a $\Phi$-morphism between a moment and the canonical model.
 Below, we denote by $C_\Phi(w)$ the $\sqsubset  _{\Phi}$-cluster of $w$, i.e.\
$$C_\Phi(w)=\{w\}\cup \{v:w\sqsubset  _{\Phi}v \sqsubset  _{\Phi} w\}.$$
\begin{restatable}{lem}{existsmoment}\label{lemmExistsMoment}
Given $\Lambda \in \{\mathbf{wK4C},\mathbf{K4C},\mathbf{GLC} \}$, for all $w\in W_\mathrm c$ there exists a $\Lambda$-moment $\moment$ and a $\Phi$-morphism $\pi\colon |\moment|\to W_\mathrm c$ such that $\pi(r_\moment) = w$.
\end{restatable}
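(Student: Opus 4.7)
The plan is to build $\moment$ by recursion on the cardinality of $\{w\}\cup {\sqsubset_\Phi}(w)$, which is finite by Lemma 5.1(ii). At each stage we will apply the moment construction $\binom{\vec{\mathfrak a}}{C'}_w$, taking $C':=C_\Phi(w)$ for the cluster sitting at the root, and slotting in, for each $\sqsubset_\Phi$-successor $v$ of $C_\Phi(w)$ that lies outside $C_\Phi(w)$, a moment $\mathfrak a_v$ obtained inductively. Besides the properties in Definition \ref{phimor}, I will carry along an auxiliary invariant $\pi(|\moment|)\subseteq \{w\}\cup{\sqsubset_\Phi}(w)$, which is needed both to control the target of $\pi$ and to rule out a degenerate case in the forth clause.

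In the base case, ${\sqsubset_\Phi}(w)\subseteq C_\Phi(w)$, and I take $|\moment|:=C_\Phi(w)$, $\sqsubset_\moment$ inherited from $\sqsubset_{\rm c}$ on $C_\Phi(w)$ (empty in the $\mathbf{GLC}$ case, where irreflexivity of $\sqsubset_\Phi$ forces $C_\Phi(w)=\{w\}$ by Lemma 5.1(v)), and let $\pi$ be the inclusion into $W_{\rm c}$. This is a single cluster, hence tree-like and automatically $\Lambda$-admissible; the back clause of Definition \ref{phimor} is vacuous because every $\sqsubset_\Phi$-successor of a point in $C_\Phi(w)$ already lies in $C_\Phi(w)$.

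For the inductive step, let $V:=\bigcup_{x\in C_\Phi(w)} \bigl({\sqsubset_\Phi}(x)\setminus C_\Phi(w)\bigr)$, which is finite by Lemma 5.1(ii). For every $v\in V$ one checks, using (weak) transitivity of $\sqsubset_\Phi$ together with the cluster invariance from Lemma 5.1(iv), that no element of $C_\Phi(w)$ belongs to $\{v\}\cup{\sqsubset_\Phi}(v)$; hence $|\{v\}\cup{\sqsubset_\Phi}(v)| < |\{w\}\cup{\sqsubset_\Phi}(w)|$ and the inductive hypothesis supplies $\mathfrak a_v$ and $\pi_v$ with $\pi_v(r_{\mathfrak a_v})=v$. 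Enumerating $V=\{v_0,\dots,v_{N-1}\}$, I set $\moment := \binom{\vec{\mathfrak a}}{C_\Phi(w)}_w$ and define $\pi$ as the identity on $C_\Phi(w)$ and as $\pi_{v_m}$ on each $|\mathfrak a_{v_m}|$. Tree-likeness, root preservation and (weak) transitivity of $\sqsubset_\moment$ are straightforward from the three clauses defining $\sqsubset_\moment$ in the moment construction together with the inductive properties of each $\mathfrak a_v$; irreflexivity in the $\mathbf{GLC}$ case is immediate since $C_\Phi(w)=\{w\}$ and the strict successor subtrees are pairwise disjoint and irreflexive by hypothesis.

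The only verifications that are not immediate are forth and back. For forth, the interesting case is $y\in C_\Phi(w)$ and $z\in |\mathfrak a_v|$: weak transitivity inside the cluster yields $y\sqsubset_\Phi v$, and the forth clause for $\pi_v$ yields $v\sqsubseteq_{\rm c}\pi_v(z)$; the invariant $\pi_v(|\mathfrak a_v|)\subseteq\{v\}\cup{\sqsubset_\Phi}(v)$ rules out $y=\pi_v(z)$, so weak transitivity of $\sqsubset_{\rm c}$ delivers $y\sqsubset_{\rm c}\pi_v(z)$. The real obstacle, and the reason the finitary relation $\sqsubset_\Phi$ is designed as it is, is the back clause. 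If $\pi(y)\sqsubset_\Phi u$ with $y\in C_\Phi(w)$, then $u$ lies either in $C_\Phi(w)$, in which case $y\sqsubset_\moment u$ via the intra-cluster part of the construction, or in $V$, in which case $r_{\mathfrak a_u}$ serves as witness with $\pi(r_{\mathfrak a_u})=u$; for $y\in|\mathfrak a_v|$ the back clause is inherited from $\pi_v$. The verification that these two cases exhaust all possible successors is exactly where finiteness of $V$ and the cluster-invariance property (iv) of Lemma 5.1 play an essential role.
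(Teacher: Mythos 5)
Your proposal is correct and follows essentially the same route as the paper's proof: an induction grounded in the finiteness of ${\sqsubset_\Phi}(w)$ (the paper phrases it as converse well-foundedness of the strict successor relation $\sqsubset_\Phi^1$, you as a cardinality measure — these coincide since your $V$ equals $\{v: v\sqsupset_\Phi^1 w\}$), followed by the moment construction with $C_\Phi(w)$ at the root and inductively obtained moments plugged in for the strict successors. The only cosmetic difference is that the paper strengthens the induction hypothesis to ``$\pi$ is a $p$-morphism with respect to $\sqsubset_\Phi$'' where you instead carry the range invariant $\pi(|\moment|)\subseteq\{w\}\cup{\sqsubset_\Phi}(w)$; both serve the same purpose of ruling out the degenerate identification in the forth clause.
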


\begin{proof}

We prove the stronger claim, that there is a moment $ \moment$ and a map $ \pi\colon | \moment| \to W_{\rm c} $ that is a $p$-morphism to the structure $
\langle W_{\rm c},\sqsubset_\Phi\rangle$ (we will say that $\pi$ is a {\em $p$-morphism with respect to $\sqsubset_\Phi$}).
Let $\sqsubset  _{\Phi}^1$ be the strict $\sqsubset  _{\Phi}$ successor relation, i.e.\ $w\sqsubset  _{\Phi}^1v$ iff $w \sqsubset  _{\Phi} v$ and $\neg (v \sqsubset  _{\Phi} w)$.
Since $\sqsubset  _{\Phi}^1$ is converse well-founded, we can assume inductively that for each $v$ such that $w \sqsubset  _{\Phi}^1 v$, there is a moment ${\moment_{v}}$ and a $p$-morphism $\pi_{v}:|{\moment_{v}}|\rightarrow W_{\rm c}$ with respect to $\sqsubset  _{\Phi}$ that maps the root of ${\moment_{v}}$ to $v$.
Accordingly, we define a moment
 $$ \moment = {\{{\moment}_v: v \sqsupset  _{\Phi}^1 w \}\choose C_\Phi(w) }_w;$$
note that $C_\Phi(w)$ is finite and, if $\Lambda=\bf{GLC}$, it is a singleton, so $\moment$ is a $\Lambda$-moment by Lemma~\ref{storyide}.

Next we define a map $ \pi:|\moment |\rightarrow W$ as
$$   \pi(x)=\begin{cases} x &\text{if }x \in C_\Phi(w), \\
\pi_v (x)&\text{if }x\in|\moment _v|. \end{cases}$$

We prove that $  \pi$ is a $p$-morphism for $\sqsubset_\Phi$.
First assume that $  \pi(x)\sqsubset  _{\Phi} v$; we must show that there is $y\sqsupset_\moment x$ so that $ \pi(y) = v $.
Either $x\in C_\Phi(w)$ or $x\in |\moment_u|$ for some $u$.
In the first case, we consider two sub-cases.
If $v\in C_\Phi(w)$ as well, then $ \pi(v) = v$ and we may set $y:=v$.
If not, $   \pi(x)\sqsubset  _{\Phi}^1 v$, and by letting $y$ be the root of $\moment_v$, we see that $ \pi(y) = v$.
If instead $x\in |\moment_u|$ for some $u$, then by assumption $\pi_u$ is a $p$-morphism, immediately yielding the desired $y$ (also satisfying $y \in |\moment_u|$).

Now, suppose $x \sqsubset  _\moment y$. We check that $  \pi(x)\sqsubset  _\Phi   \pi(y)$.
There are two cases to consider:
first suppose that $x\in | \moment_{u}|$ for some $u$ and $y\in |\moment_{v}|$ for some $v$. Then by the definition of the moment construction operation, $ u = v $. By the induction hypothesis, since $ \pi_{u}$ is a $p$-morphism, then $  \pi(x)\sqsubset  _\Phi  \pi(y)$.
Otherwise, suppose that $x\in C_\Phi(w)$.
If $y\in C_\Phi(w)$ we immediately obtain $\pi(x)\sqsubset  _\Phi  \pi(y)$, since $C_\Phi(w)$ is precisely the $\sqsubset  _\Phi$-cluster of $w$ (with the same accessibility relation).
Otherwise, $y\in |\moment_{v}|$ for some $v$, and we let $y'$ be the root of $\moment_v$, so that $ \pi(y') = v$.
Then,  $  \pi(x) \sqsubset  _{\Phi}  \pi(y')$.
If $y=y'$ we are done.
If $y\neq y'$, then since by the induction hypothesis $  \pi_{v}$ is a $p$-morphism with respect to $\sqsubset_\Phi$, then $y'  \sqsubset   y$ implies $  \pi(y') \sqsubset  _ \Phi \pi(y)$. By the (weak) transitivity of $\sqsubset  _\Phi$ we have that either $ \pi(x) \sqsubset  _\Phi  \pi(y)$ and we are done, or if $\Lambda=\bf wK4C$, possibly $  \pi(x) =   \pi(y)$.
But then, $  \pi(y') \sqsubset  _ \Phi \pi(x)$, contradicting that $ \pi(x) \sqsubset^1_\Phi \pi(y ')$.
\end{proof}

Next, we wish to extend Lemma~\ref{lemmExistsMoment} to provide a dynamic $\Phi$-morphism instead of a `static' one.
This will be achieved via a step-by-step construction.
The issue is that the construction will not yield $\Phi$-morphisms, but rather an approximate version which we call \emph{pre-$\Phi$-morphisms.}
Basically, the worlds `at the bottom' of the moments we construct will not be well behaved, but these defects can be fixed using a quotient which affects only these worlds at the bottom.
The following definitions make these notions precise.

\begin{defi}[$\Lambda$-bottom]
Let $  \moment$ be a moment and $ \pi :  | \moment|\rightarrow W_{\rm c}$.
We say that $x\in |\moment|$ is \emph{at the $\Lambda$-bottom} for $\Lambda\in\{\bold{wK4},\bold{K4},\bold{ GL}\}$ if
\begin{itemize}
\item $\Lambda \neq \bold{GL}$ and $\pi (x)\in C(\pi(r_{\moment}))$, or

\item $\Lambda=\bold{GL}$ and for all $y \sqsubset  _\moment x$, $\pi (y)=\pi (x)$.
\end{itemize}
We will refer to `$\Lambda$-bottom' simply as `bottom' when this does not lead to confusion.
\end{defi}

We say that $ \pi:|\moment|\rightarrow W_{\rm c}$ is a \emph{pre-$\Phi$-morphism} if it fulfils conditions 1 and 4 of a $\Phi$-morphism (Definition~\ref{phimor}), and
$ x\sqsubset  _\mathfrak m y $ implies that either $ \pi (x) \sqsubset  _{\rm c}  \pi(y) $ or $ x,y $ are at the bottom.

\begin{defi}[quotient moment] 
Let $\Lambda\in\{\bold{wK4C},\bold{K4C},\bold{ GLC}\}$.
Let $  \moment$ be a $\Lambda$-moment and $ \pi :  | \moment|\rightarrow W_\mathrm c$ be a pre-$\Phi$-morphism.
We define $x\sim y$ if either $x=y$, or $x,y$ are at the bottom and $\pi (x) = \pi (y)$.
Given $y\in|\moment|$ we set $[y]=\{z:z\sim y\}$.

The \emph{quotient moment} $ \quot \moment \pi $ of $\moment$ and its respective map $\quotpi \pi:|\quot\moment \pi|\rightarrow W_{\rm c}$ are defined as follows, where $x,y\in|\moment|$:
\begin{enumerate}[(a)]
\item $|\quot\moment\pi|=\{[x]:x\in| {\moment}|\}$;
\item $[x]\sqsubset  _{\quot\moment\pi} [y]$ iff one of the following conditions is satisfied:
\begin{itemize}
\item  $x,y$ are at the bottom, $ \pi(x) \sqsubset  _{\rm c}  \pi(y)$, and $\Lambda\neq \bold{GLC}$;
\item   $x$ is at the bottom and $y$ is not at the bottom;
\item $x,y$ are not at the bottom and $x \sqsubset  _\moment y$;
\end{itemize}

\item $\nu_{\quot\moment\pi} (p)=\{[x]:x\in {\nu_\moment}(p)\}$;

\item $r_{\quot\moment\pi} = [r_\moment] $;

\item $\quotpi\pi ([x])= {\pi}(x)$.
\end{enumerate} 
\end{defi}

The above is well-defined due to the way we defined $\sim$; for example, (b) is well-defined since $x\sim x'$ implies that either both are at the bottom and neither is, and in the first bullet point we use that $x\sim x'$ implies $\pi(x) = \pi(x')$ (and similarly for $y\sim y'$).
We also use this property to see that (c) is well-defined, since $\pi(x) = \pi(x')$ implies that the two agree on all atoms.

The following proposition shows that the property of being at the bottom is downward-closed.

 \begin{prop}\label{atthebottom}
Let $  \moment$ be a moment and $ \pi :  | \moment|\rightarrow W_\mathrm c$ be a pre-$\Phi$-morphism.
Let $x,y\in |\moment|$. If $x$ is at the $\Lambda$-bottom and $y\sqsubset  _\moment x$, then $y$ is at the $\Lambda$-bottom. \end{prop}

\begin{proof}

Suppose $\Lambda \neq \bold{GLC}$. If $x$ is at the bottom, we have that $\pi(x)\in C(\pi(r_\moment))$.
If moreover $y\sqsubset  _\moment x$, then since $\pi$ is a pre-$\Phi$-morphism, we have that either $\pi(y)\sqsubset  _{\rm c}\pi(x)$ or $y$ is at the bottom.
In the latter case there is nothing to prove.
Otherwise, suppose $r_\moment \sqsubset  _\moment y \sqsubset  _\moment x$ and $y$ is not at the bottom. Then $\pi(r_\moment) \sqsubset  _{\rm c}  \pi(y) \sqsubset  _{\rm c} \pi( x) $ and hence $\pi(y) \in C(r_\moment)$.
This is a  contradiction and so $y$ is at the bottom.

Suppose $\Lambda=\bold{GLC}$. Then by the transitivity of $\sqsubset  _\moment$ it follows that $y$ is also at the bottom.
\end{proof}

The quotient moment and its respective map hold some essential properties for the derivation of this section's main result.
The idea is that by constructing a $\Lambda$-moment $m$ with an associated pre-$\Phi$-morphism $\pi$, we get that $\quot\moment\pi$ is still a $\Lambda$-moment, but now $[\pi]$ is a proper $\Phi$-morphism. The next few results make this intuition precise.

\begin{restatable}{lem}{premor}\label{lemmPremor}
For $\Lambda \in \{\bold{wK4},\bold{K4},\bold{GL}\}$, if $\moment$ is any $\Lambda$-moment and $ \pi :  | \moment|\rightarrow W_{\rm c}$ is a pre-$\Phi$-morphism, then $\quot\moment\pi$ is a $\Lambda$-moment and $[\cdot]\colon |\moment| \to |\quot\moment\pi|$ is weakly monotonic and root-preserving.
\end{restatable}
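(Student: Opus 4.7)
The plan is to verify in order: (a) well-definedness of $\sqsubset_{\quot\moment\pi}$ and $\quotpi\pi$ on $\sim$-classes; (b) weak monotonicity and root-preservation of $[\cdot]$; (c) the moment structure on $\quot\moment\pi$ (finiteness, rootedness, tree-likeness); and (d) the $\Lambda$-frame axioms. All arguments proceed by case analysis on which representatives are at the $\Lambda$-bottom. For the $\mathbf{GL}$ case I would first prove a propagation lemma: since $\mathbf{GL}$-moments are transitive, if $y$ is at the $\mathbf{GL}$-bottom and $x\sqsubset_\moment y$ then every $z\sqsubset_\moment x$ satisfies $z\sqsubset_\moment y$ by transitivity, so $\pi(z)=\pi(y)=\pi(x)$, and hence $x$ is at the bottom as well.

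Well-definedness is immediate because $x\sim y$ with $x\neq y$ forces both to be at the bottom with $\pi(x)=\pi(y)$, and the clauses defining $\sqsubset_{\quot\moment\pi}$ only refer to this data. Weak monotonicity then drives the rest: given $x\sqsubset_\moment y$, the pre-$\Phi$-morphism hypothesis supplies either $\pi(x)\sqsubset_c\pi(y)$ or both $x,y$ at the bottom. If both are at the bottom with $\Lambda=\mathbf{GL}$, propagation forces $[x]=[y]$; if both are at the bottom with $\Lambda\neq\mathbf{GL}$, then $\pi(x),\pi(y)\in C(\pi(r_\moment))$ lie in a common canonical cluster, yielding $[x]\sqsubset_{\quot\moment\pi}[y]$ or $[x]=[y]$; if $x$ is at the bottom and $y$ is not, the second clause of the quotient relation applies; if neither is, the third does; and the excluded case ($x$ not at the bottom but $y$ at the bottom) is contradicted by combining $\pi(x)\sqsubset_c\pi(y)\sqsubseteq_c\pi(r_\moment)$ with $\pi(r_\moment)\sqsubset_c\pi(x)$ (obtained from $r_\moment\sqsubset_\moment x$ via pre-morphism), since this would force $\pi(x)\in C(\pi(r_\moment))$. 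Root-preservation $[r_\moment]=r_{\quot\moment\pi}$ is by definition, and from these, finiteness of $|\quot\moment\pi|$ (inherited from $|\moment|$), rootedness of $[r_\moment]$ (from weak monotonicity applied to $r_\moment\sqsubseteq_\moment x$), and tree-likeness (by splitting $[a]\sqsubseteq[c]\sqsupseteq[b]$ on bottom-status and reducing either to tree-likeness of $\moment$ or to the cluster calculation above, with mixed cases collapsing via the second clause) all follow.

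The main obstacle is (d). Weak transitivity for $\mathbf{wK4}$ is a routine clause-by-clause check on chains $[x]\sqsubset_{\quot\moment\pi}[y]\sqsubset_{\quot\moment\pi}[z]$ in the quotient. For $\mathbf{K4}$, transitivity of $\sqsubset_\moment$ and of $\sqsubset_c$ in the canonical model rules out the $[x]=[z]$ alternative, promoting weak transitivity to genuine transitivity. For $\mathbf{GL}$, propagation is again decisive: it forbids any $\sqsubset_\moment$-edge from a non-bottom point to a bottom point, so the quotient clauses cannot introduce a cycle at any class, and together with finiteness and transitivity this yields irreflexivity and converse well-foundedness of $\sqsubset_{\quot\moment\pi}$.
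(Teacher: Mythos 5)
Your proposal is correct and follows essentially the same route as the paper: a direct case analysis on bottom-status, with your ``propagation lemma'' being exactly the paper's auxiliary Proposition~\ref{atthebottom} (bottom points are downward closed under $\sqsubset_\moment$), and your irreflexivity argument for $\mathbf{GL}$ matching the paper's. You simply spell out the verifications (weak monotonicity of $[\cdot]$, tree-likeness, and the frame axioms) that the paper dismisses as ``easy to verify''.
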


\begin{proof}
For the cases where $\Lambda=\bold{wK4}$ and $\Lambda=\bold{K4}$ it is easy to verify that  $|\quot\moment\pi|$ is weakly transitive and transitive, respectively. For the case where $\Lambda=\bold{GL}$ suppose for contradiction that $[x] \sqsubset  _{\quot\moment\pi}[x]$. Then it must be the case that $x$ is not at the bottom and $x\sqsubset  _\moment x$ in contradiction to $\mathfrak m$ being a $\bold{GL}$-moment.

Next we check that $[\cdot]$ is weakly monotonic.
Suppose that $x\sqsubset  _{\moment}y$.
By Proposition~\ref{atthebottom}, if $ y $ is at the bottom, so is $ x $.
Thus in order to prove that $[ x ]\sqsubseteq_{\quot\moment\pi}[ y ]$, there are three cases to consider.
If $ x , y $ are at the bottom, either $\pi( x ) = \pi( y )$, so that $[ x ] = [ y ]$, or else $\Lambda\neq\mathbf{GLC}$ and $\pi( x ) \sqsubset  _\mathrm c \pi( y )$, hence $[ x ] \sqsubset_{\quot\moment\pi}[ y ]$ by definition of $\sqsubset_{\quot\moment\pi}$.
	If $ x $ is at the bottom and $ y $ is not at the bottom, then by definition $[ x ] \sqsubset  _{\quot\moment\pi}[ y ] $.
	If neither is at the bottom, $  x  \sqsubset  _{ \moment }  y  $ yields $[ x ] \sqsubset  _{\quot\moment\pi}[ y ] $.
In all cases we have that $[ x ]\sqsubseteq_{\quot\moment\pi}[ y ]$.
\end{proof}

\begin{restatable}{prop}{welldefined}\label{propIsPMorph}
If $\moment$ is a moment and $\pi\colon |\moment| \to W_{\rm c}$ is a pre-$\Phi$-morphism, 
then the map $\quotpi\pi$ is a well-defined $\Phi$-morphism.
\end{restatable}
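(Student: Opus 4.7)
The proof decomposes into checking well-definedness of $\quotpi\pi$ together with the three required conditions (1), (3), and (4) of Definition \ref{phimor}. Well-definedness is immediate from the definition of $\sim$: if $x \sim y$ with $x \neq y$, then by construction both are at the bottom and $\pi(x) = \pi(y)$, so $\quotpi\pi([x]) := \pi(x)$ does not depend on the representative, and $\nu_{\quot\moment\pi}$ is well-defined for the same reason. Condition (1) then transfers directly from the corresponding clause for the pre-$\Phi$-morphism $\pi$: $[x] \in \nu_{\quot\moment\pi}(p) \iff x \in \nu_\moment(p) \iff p \in \pi(x) = \quotpi\pi([x])$.

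For the back condition (4), assume $\quotpi\pi([x]) \sqsubset_\Phi v$; condition (4) applied to $\pi$ yields $y \in |\moment|$ with $x \sqsubset_\moment y$ and $\pi(y) = v$, and my task is then to exhibit a quotient clause witnessing $[x] \sqsubset_{\quot\moment\pi} [y]$. I would split on the bottomness of $x$ and $y$: if neither is at the bottom, clause 3 of the quotient relation applies directly; if $x$ is at the bottom and $y$ is not, clause 2 applies; and if both are at the bottom, then $\pi(x) \sqsubset_\Phi \pi(y) \subseteq \sqsubset_{\rm c}$ together with $\Lambda \neq \bold{GLC}$ supplies clause 1. The subcase $\Lambda = \bold{GLC}$ with both at the bottom cannot arise, for then the $\bold{GL}$-bottom condition combined with $x \sqsubset_\moment y$ forces $\pi(x) = \pi(y)$, clashing with the irreflexivity of $\sqsubset_\Phi$ established in the preceding lemma on the finitary relation.

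The main obstacle is the forth condition (3): from $[x] \sqsubset_{\quot\moment\pi} [y]$ one must deduce $\pi(x) \sqsubset_{\rm c} \pi(y)$. Clause 1 gives the conclusion by assumption, and clause 3 follows from the pre-$\Phi$-morphism condition once the ``both at the bottom'' escape is eliminated by the assumption that neither point is at the bottom. Clause 2, where $x$ is at the bottom and $y$ is not but no $\sqsubset_\moment$-relation between $x$ and $y$ is asserted, is the delicate case. Here my plan is to route through the root $r_\moment$, which lies $\sqsubseteq_\moment$-below every element by the tree-like structure of the moment and is itself at the bottom (vacuously in the $\bold{GL}$ case, and by $\pi(r_\moment)\in C(\pi(r_\moment))$ in the other two cases). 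Since $y$ is not at the bottom, $r_\moment \sqsubset_\moment y$ strictly, and the pre-$\Phi$-morphism clause delivers $\pi(r_\moment) \sqsubset_{\rm c} \pi(y)$. For $\Lambda = \bold{GL}$, the $\bold{GL}$-bottom condition on $x$ together with $r_\moment \sqsubseteq_\moment x$ forces $\pi(r_\moment) = \pi(x)$, giving $\pi(x) \sqsubset_{\rm c} \pi(y)$ immediately. For $\Lambda \in \{\bold{wK4}, \bold{K4}\}$, $x$ at the bottom places $\pi(x) \in C(\pi(r_\moment))$, so weak transitivity of $\sqsubset_{\rm c}$ delivers $\pi(x) \sqsubseteq_{\rm c} \pi(y)$; equality is excluded because $\pi(x) = \pi(y)$ would put $\pi(y)$ into $C(\pi(r_\moment))$, making $y$ at the bottom and contradicting our assumption.
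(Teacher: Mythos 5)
Your proof is correct and follows essentially the same decomposition as the paper's: well-definedness from the definition of $\sim$, a case split on bottomness for the forth condition, and the irreflexivity of $\sqsubset_\Phi$ to exclude the $\mathbf{GLC}$ both-at-the-bottom subcase in the back condition. In fact your treatment of the subcase where $x$ is at the bottom and $y$ is not is more precise than the paper's (which asserts $\pi(x)=\pi(r_\moment)$, whereas for $\mathbf{wK4C}$ and $\mathbf{K4C}$ only $\pi(x)\in C(\pi(r_\moment))$ holds, requiring exactly the weak-transitivity step you supply); the only point you leave implicit is that the fourth combination in the back condition ($y$ at the bottom, $x$ not) is impossible because being at the bottom is downward closed under $\sqsubset_\moment$.
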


\begin{proof}
To show that $\quotpi\pi$ is well defined, suppose that for some $x,y\in|\moment|$ we have $x\neq y$ and $[x]=[y]$. By definition of $\sim$ it follows that $\pi(x)=\pi(y)$, as required. 
It remains to show that $[x]\sqsubset  _{\quot\moment\pi} [y]$ implies $\quotpi\pi([x])\sqsubset  _{\rm c}\quotpi\pi([y])$. Suppose $[x] \sqsubset  _{\quot\moment\pi} [y]$.
If both $x,y$ are at the bottom, then $[x] \sqsubset  _{\quot\moment\pi} [y]$ may only occur when $\pi(x) \sqsubset_{\rm c} \pi(y)$.
If neither $x$ nor $y$ are at the bottom then $[x] \sqsubset  _{\quot\moment\pi} [y]$ may only occur when $ x \sqsubset_{\moment} y$, which implies that $\pi(x) \sqsubset_{\rm c} \pi(y)$ since $\pi$ is a pre-$\Phi$-morphism.
The case where $y$ is at the bottom but $x$ is not is impossible by Lemma~\ref{atthebottom}.
So we are left with the case where $x$ is at the bottom but $y$ is not.
But then $\pi(x) = \pi(r_\mathfrak m) \sqsubset_{\rm c} \pi(y)$, where $\pi(r_\mathfrak m) \sqsubset_{\rm c} \pi(y)$ is obtained from the definition of pre-$\Phi$-morphism, using the fact that $r_\mathfrak m \sqsubset_\mathfrak m y$ and $y$ is not at the bottom.
We conclude that $\quotpi\pi$ satisfies Condition~\ref{itForth} of a $\Phi$-morphism.

For Condition~\ref{itBack}, suppose that $\quotpi\pi([x]) \sqsubset_\Phi v$.
Since $\quotpi\pi([x]) = \pi(x)$, there is $y\sqsupset_\mathfrak m x$ such that $\pi(y) = v$.
Inspecting the definition of $\sqsubset_{\quot\moment\pi}$ yields that $\quotpi\pi([x])\sqsubset_{\quot\moment\pi}\quotpi\pi([y])$, except possibly when $\Lambda = \bf{GLC}$ and $x,y$ are at the bottom.
However, in this case we would have $v=\pi(x)$, but $ \sqsubset_\Phi   $ is irreflexive, so this is impossible.
We conclude that Condition~\ref{itBack} is also satisfied.
 \end{proof}
 
 Thus from a pre-$\Phi$-morphism we can produce a suitable $\Phi$-morphism. It remains to provide tools for producing dynamic $\Phi$-morphisms.
 
 \begin{prop}\label{propQuotMap}
Fix $\Lambda \in \{\mathbf{wK4C},\mathbf{K4C},\mathbf{GLC}\}$ and let $\mathfrak M^\Lambda_{\rm c} = \langle W,\sqsubset,g,\nu\rangle$.
Let $\moment$ and $\noment$ be moments, $ f \colon |\moment|\to|\noment|$ be root-preserving, weakly monotonic, and $\pi \colon |\moment| \to W$ and $\rho \colon |\noment| \to W $ be such that $\pi$ is a $\Phi$-morphism, $\rho$ is a pre-$\Phi$-morphism, and $g\circ \pi = \rho\circ f$.
Let $\quotpi f$ be given by $\quotpi f( x ) = [f(x)]$.
Then, $\quotpi f \colon |\moment| \to |\quot\noment \rho|$ is weakly monotonic and root-preserving and satisfies $g\circ \pi = \quotpi\rho\circ\quotpi  f $.
\end{prop}

\begin{proof}
Checking that $g\circ \pi = \quotpi\rho\circ\quotpi  f $ is routine, and $[f]$ is weakly monotonic and root-preserving since, writing $[f] = [\cdot]\circ f$, we see that it is a composition of maps with these properties in view of Lemma~\ref{lemmPremor}. 
\end{proof}
 
Using these properties, we can prove the existence of an appropriate story that maps to the canonical model.
This is based on the following useful lemma:

\begin{lem}\label{lemmExistsSuper}
Fix $\Lambda \in \{\mathbf{wK4C},\mathbf{K4C},\mathbf{GLC}\}$ and let $\mathfrak M^\Lambda_{\rm c} = \langle W_{\rm c},\sqsubset_{\rm c},g_{\rm c},\nu_{\rm c}\rangle$.
Let $\moment$ be a $\Lambda$-moment and suppose that there exists a $\Phi$-morphism $\pi\colon |\moment|\to W_{\rm c}$. Then, there exists a $\Lambda$-moment $ \noment$, a weakly monotonic map $f \colon |\moment| \to | \noment |$, and a $\Phi$-morphism $\rho\colon| \noment | \to W_{\rm c} $ such that
$g_{\rm c}\circ \pi = \rho \circ f$.
\end{lem}

\begin{proof}
We proceed by induction on the height of $\moment$.
Let $C$ be the cluster of $r_\moment$ and let $\vec {\mathfrak a} = \langle\mathfrak a_n\rangle_{n<N}$ be the generated sub-models of the immediate strict successors of $r_\moment$; note that each $\mathfrak a_n$ is itself a moment of smaller height, with $\Phi$-morphisms $\pi_n \colon |\mathfrak a_n| \to W_\mathrm c$ obtained by restricting $\pi$ to $|\mathfrak a_n|$ (note that $N=0$ is allowed if there are no strict successors).
By the induction hypothesis, there exist moments $\langle\mathfrak a'_n\rangle_{n<N}$, root-preserving, weakly monotonic maps $f_n \colon |\mathfrak a_n | \to |\mathfrak a'_n|$, and $\Phi$-morphism $\rho_n \colon |\mathfrak a'_n| \to W_{\rm c}$ such that  $g_{\rm c}\circ \pi_n = \rho_n \circ f_n$.
Moreover, for each $v\sqsupset  _{\Phi} g_{\rm c}(r_\moment) $, by Lemma~\ref{lemmExistsMoment} there are $\mathfrak b_v$ and a $\Phi$-morphism $\sigma_v \colon |\mathfrak b_v| \to W_{\rm c}$ mapping the root of $\mathfrak b_v$ to $v$.
Let $D = g_{\rm c} \pi (C) \cup C_\Phi(g_{\rm c}(r_\moment))$, and let $\hat\noment = {\vec{\mathfrak a} *\vec{\mathfrak b}\choose D}_{g_{\rm c}(w)}$.
Define maps  $\hat \rho\colon |\hat\noment| \to W_{\rm c}$ and $\hat f\colon |\moment| \to |\hat\noment|$, given by
\begin{multicols}2
$$
\hat \rho(w)
=
\begin{cases}
 w  &\text{if $w \in D$},\\
\rho_n(w) & \text{if $w \in |\mathfrak a_n|$},\\
\sigma_v(w) & \text{if $w \in |\mathfrak b_v|$};
\end{cases}
$$

$$
\hat f(w)
=
\begin{cases}
g_{\rm c}(w) &\text{if $w \in C$},\\
f_n(w) & \text{if $w \in |\mathfrak a_n|$}.
\end{cases}
$$
\end{multicols}
It is not hard to check that $\hat \rho$ is a pre-$\Phi$-morphism, $\hat f$ is weakly monotonic, and $g_{\rm c} \circ  \pi  = \hat \rho  \circ \hat f$.
Setting $\noment =\quot {\hat\noment}{\hat \rho}$, $f=\quotpi {\hat f}$ and $\rho =\quotpi {\hat \rho}$, Lemma~\ref{lemmPremor} tells us that $\noment$ is a $\Lambda$-moment and Propositions~\ref{propIsPMorph} and \ref{propQuotMap} imply that $f$ and $\rho$ have the desired properties.
\end{proof}

\begin{restatable}{prop}{storyexist}\label{propExistsStory}
Fix $\Lambda\in \{\bold{wK4C},\bold{K4C},\bold{GLC}\}$.
Given $I<\omega$ and $w\in W_{\rm c}$, there is a story $\gog$ of duration $I$ and a dynamic $\Phi$-morphism $\pi\colon |\gog| \to W_{\rm c}$ with $w =\pi(r_\gog)$.
\end{restatable}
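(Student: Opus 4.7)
The plan is to proceed by induction on $I$, using Lemma \ref{lemmExistsMoment} for the base case and iteratively applying Lemma \ref{lemmExistsSuper} to extend the story by one moment at each step.

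For the base case $I = 0$, I would apply Lemma \ref{lemmExistsMoment} directly to the point $w$, yielding a $\Lambda$-moment $\gog_0$ and a $\Phi$-morphism $\pi\colon|\gog_0| \to W_{\rm c}$ with $\pi(r_{\gog_0}) = w$. Then I set $\gog := \gog_0$ with $f_\gog = {\rm Id}_{|\gog_0|}$, which gives a story of duration $0$. All four conditions for a dynamic $\Phi$-morphism are immediate: conditions 1, 3, 4 hold because $\pi$ is already a $\Phi$-morphism on $\gog_0$, and condition 2 is vacuous as there is no $i < 0$.

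For the inductive step, suppose we have a story $\gog$ of duration $I$ with dynamic $\Phi$-morphism $\pi\colon|\gog| \to W_{\rm c}$ and $\pi(r_\gog) = w$. First I would verify that the restriction $\pi|_{|\gog_I|}$ is a $\Phi$-morphism on the final moment $\gog_I$: conditions 1, 3 are preserved by restriction, and condition 4 holds because the witness $y$ produced by $\pi$ being a dynamic $\Phi$-morphism must lie in $|\gog_I|$ since $\sqsubset_\gog$ relates only points within the same moment. Then I apply Lemma \ref{lemmExistsSuper} to $\gog_I$ and $\pi|_{|\gog_I|}$, obtaining a $\Lambda$-moment $\gog_{I+1}$, a weakly monotonic root-preserving map $f_I\colon|\gog_I| \to |\gog_{I+1}|$, and a $\Phi$-morphism $\rho\colon|\gog_{I+1}| \to W_{\rm c}$ satisfying $g_{\rm c} \circ (\pi|_{|\gog_I|}) = \rho \circ f_I$.

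I then form the extended story $\gog'$ with moments $\gog_0,\ldots,\gog_I,\gog_{I+1}$, inherited transitions $f_0,\ldots,f_{I-1}$, the new transition $f_I$ (which replaces the identity convention previously held on $|\gog_I|$), and the identity on $|\gog_{I+1}|$. I extend $\pi$ to $\pi'\colon|\gog'| \to W_{\rm c}$ by setting $\pi'|_{|\gog_{I+1}|} = \rho$. Verifying the conditions for $\pi'$ being a dynamic $\Phi$-morphism is straightforward: conditions 1, 3, 4 hold on each slice $|\gog_i|$ by the fact that $\pi$ and $\rho$ are each $\Phi$-morphisms on their respective moments; for condition 2, the cases $i < I$ are inherited from $\pi$, while the new case $i = I$ is precisely the equation provided by Lemma \ref{lemmExistsSuper}. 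Root-preservation gives $\pi'(r_{\gog'}) = \pi(r_\gog) = w$ as required.

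There is no real obstacle here because the technical heavy lifting has already been carried out in Lemmas \ref{lemmExistsMoment} and \ref{lemmExistsSuper}; the present proposition is an inductive packaging. The only subtlety worth attention is the reinterpretation of the ``identity'' convention at the top of the story: upon each extension, what used to be the final moment (and thus carried $f_\gog$ as the identity) now carries a genuine weakly monotonic map to the new final moment, and this reinterpretation is exactly what makes condition 2 of Definition \ref{phimor} hold at the newly non-terminal layer.
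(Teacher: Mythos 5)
Your proposal is correct and follows essentially the same route as the paper: induction on $I$, with Lemma \ref{lemmExistsMoment} handling the base case and Lemma \ref{lemmExistsSuper} supplying the new top moment, transition map, and $\Phi$-morphism at each step. The paper's own proof is just a terser version of this argument; your additional checks (that the restriction of $\pi$ to the final moment is a $\Phi$-morphism, and that the identity convention at the top layer is correctly replaced by the new transition) are exactly the details the paper leaves implicit.
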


\begin{proof}
Proceed by induction on $I$.
For $I=0$, this is essentially Lemma~\ref{lemmExistsMoment}.
Otherwise, by the induction hypothesis, assume that a story $\hat{\gog}$ of depth $I$ and dynamic $p$-morphism $\hat \pi$ exist. By Lemma~\ref{lemmExistsSuper}, there is a moment $\gog_{I+1}$, map $f_I\colon |\gog_I| \to |\gog_{I+1}|$, and $\Phi$-morphism $\pi_{I+1} \colon |\gog_{I+1}| \to W_{\rm c}$ commuting with $f_I$.
We define $\gog$ by adding
$\gog_{I+1}$ to $\hat{\gog}$ in order to obtain the desired story.
\end{proof}

 It follows that any satisfiable formula is also satisfiable on a finite story, hence satisfiable on a finite model, yielding the main result of this section.

\begin{restatable}{thm}{mainkripke}\label{wk4c}
The logics $\bold{wK4C}$, $\bold{K4C}$ and $\bold{GLC}$ are sound and complete for their respective class of finite dynamic $\Lambda$-frames.
\end{restatable}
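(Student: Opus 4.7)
The plan is to assemble the machinery already developed in Sections 4--6 into a standard finite model property argument. Soundness is routine: for each of $\mathbf{wK4C}$, $\mathbf{K4C}$, $\mathbf{GLC}$, the modal axioms ${\rm w4}$, ${\rm 4}$, ${\rm L}$ are valid on weakly transitive, transitive, and finite transitive irreflexive derivative frames respectively, while the axiom ${\rm C}$ and the rules ${\rm Next}_\neg$, ${\rm Next}_\wedge$, ${\rm Nec}_\bc$ are valid whenever the accompanying function is weakly monotonic. Thus the genuine content lies in proving both completeness and the finite model property simultaneously.

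For the FMP/completeness direction, fix $\Lambda\in\{\mathbf{wK4C},\mathbf{K4C},\mathbf{GLC}\}$ and suppose $\varphi$ is $\Lambda$-consistent. Let $\Phi$ be the set of subformulas of $\varphi$ (finite, closed under subformulas) and let $I$ be the $\bc$-depth of $\varphi$. By Lindenbaum's lemma there is a $\Lambda$-MCS $w\in W_{\rm c}$ with $\varphi\in w$. Invoking Proposition~\ref{propExistsStory} with these parameters yields a story $\mathfrak S$ of duration $I$ together with a dynamic $\Phi$-morphism $\pi\colon|\mathfrak S|\to W_{\rm c}$ satisfying $\pi(r_\mathfrak S)=w$. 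Since each constituent moment is finite and there are only finitely many moments, $\mathfrak S$ is finite. The truth preservation lemma, applied to $\varphi\in\Phi$ (whose $\bc$-depth is at most $I$) at the root $r_\mathfrak S\in|\mathfrak S_0|$, gives $\varphi\in\pi(r_\mathfrak S)=w$ iff $\mathfrak S, r_\mathfrak S\models\varphi$; hence $\mathfrak S,r_\mathfrak S\models\varphi$.

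It remains to check that $\mathfrak S$ is a dynamic $\Lambda$-frame in the sense of Section~6: its relation $\sqsubset_\mathfrak S=\bigsqcup_{i\le I}\sqsubset_i$ inherits weak transitivity (resp.\ transitivity, resp.\ irreflexivity and converse well-foundedness in the $\mathbf{GLC}$ case) from each moment $\mathfrak S_i$, since each $\mathfrak S_i$ is by definition a $\Lambda$-moment; and the transition map $f_\mathfrak S={\rm Id}_I\cup\bigsqcup_{i<I}f_i$ is weakly monotonic because each $f_i$ is weakly monotonic by definition of a story and the identity is trivially weakly monotonic. For $\mathbf{GLC}$, finiteness together with transitivity and irreflexivity of $\sqsubset_\mathfrak S$ gives converse well-foundedness, so $\mathfrak S$ indeed belongs to the advertised class.

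No step here is really the bottleneck: the hard work has been absorbed into Lemma~\ref{lemmExistsMoment}, Proposition~\ref{propIsPMorph}, Lemma~\ref{lemmExistsSuper} and Proposition~\ref{propExistsStory}. The only place where I would pay attention is the final verification that truth preservation applies at the \emph{root} with the correct bound $I$ on $\bc$-depth, and that the frame structure of $\mathfrak S$ really does satisfy the axiomatic constraints of each $\Lambda$ uniformly, including the subtle point that $\mathbf{GLC}$-moments must be singleton-clustered so that $\sqsubset_\mathfrak S$ is irreflexive. With these checks in place the proof is complete, and as a by-product we obtain decidability of each of the three logics.
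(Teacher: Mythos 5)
Your proposal is correct and follows essentially the same route as the paper's own proof: take a consistent formula, pass to the canonical model, invoke Proposition~\ref{propExistsStory} to obtain a finite story with a dynamic $\Phi$-morphism onto the canonical model, and conclude via the truth preservation lemma, checking that the story is a $\Lambda$-frame of the right kind. Your version simply makes explicit the choice of $\Phi$ (subformulas of $\varphi$) and $I$ (the $\bc$-depth), which the paper leaves implicit.
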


\begin{proof}
Soundness follows from Lemma~\ref{lemCH} and the well-known soundness results reviewed in Section~\ref{secDTL}.
Let $\Lambda\in\{\bold{wK4C},\bold{K4C},\bold{GLC}\}$ and suppose $\Lambda\nvdash\varphi$. Then in the canonical model $\mathfrak{M}^\Lambda_{\rm c}=\langle W,\sqsubset,g,\nu\rangle $,
there is $w\in W$ that refutes $\varphi$. 
Let $\Phi$ be the set of subformulas of $\varphi$.
By Proposition~\ref{propExistsStory}, there is a story $\gog$ and a dynamic $\Phi$-morphism $\pi:|\gog|\rightarrow W$ such that $w = \pi(r)$, where $r$ is a root of $\gog$. 
It follows that $\gog,r \not\models\varphi$. Recall that $\gog$ is a finite dynamic derivative frame.
In the case of $\bold{K4C}$, $\gog$ is also transitive, and in the case of $\bold{GLC}$, also transitive and irreflexive.
Then $\bold{wK4C}$ is complete with respect to finite dynamic derivative frames, $\bold{K4C} $ with respect to finite, transitive, dynamic derivative frames, and $\bold{GLC}$ is complete with respect to finite, irreflexive, transitive dynamic derivative frames.
\end{proof}
\section{Topological $d$-completeness}
In this section we establish completeness results for classes of dynamic topological systems with continuous functions.
We begin with the logic $\bold{GLC}$, simply because the topological $d$-completeness for $\bold{GLC}$ is almost immediate, given that a $\bold{GLC}$ model is already a dynamic derivative model based on a scattered space via the standard up-set topology.



\begin{restatable}{thm}{scattered}
$\bold{GLC}$ is the $d$-logic of all dynamic topological systems based on a scattered space. Furthermore, it has the finite model property for this class.
\end{restatable}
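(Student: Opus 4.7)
The plan is to bootstrap directly from the Kripke completeness theorem (Theorem \ref{wk4c}) by pushing a refuting finite, transitive, irreflexive dynamic derivative frame onto a scattered topological space via the up-set topology $\tau_\sqsubset$. All the heavy lifting has already been done, so the proof is essentially a matter of checking that each piece of the translation preserves the relevant structure.

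For soundness, I would invoke Lemma \ref{lemCH}(1): since $\bf{GL}$ is sound for scattered topological derivative spaces by Theorem \ref{thmGLComp}, and since $\bf{GLC} = \bf{GL}\mathbf F + \mathrm{C}$, soundness of $\bf{GLC}$ over scattered dynamic topological systems (with a continuous function) is immediate. So the content of the theorem is completeness together with the finite model property.

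For completeness and the FMP, suppose $\bf{GLC}\not\vdash\varphi$. By Theorem \ref{wk4c}, there is a finite, transitive, irreflexive dynamic derivative frame $\mathfrak F = \langle W, \sqsubset, f\rangle$ and a valuation $\nu$ such that $\mathfrak F, \nu \not\models \varphi$. I would then equip $W$ with the Aleksandroff topology $\tau_\sqsubset$. By Lemma \ref{lemmDtau}, since $\sqsubset$ is irreflexive, $d_{\tau_\sqsubset} = {\downarrow_\sqsubset}$, so the $d$-semantics on $\langle W, \tau_\sqsubset\rangle$ coincides with the Kripke semantics on $\mathfrak F$ for all formulas in $\mathcal L_\dd$, and hence also for formulas in $\mathcal L^\circ_\dd$ once the $\bc$-clause is interpreted via $f^{-1}$. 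Next, since $\sqsubset$ is transitive and irreflexive on a finite set, it is converse well-founded, so by Lemma \ref{lemScatteredKrip} the space $\langle W, \tau_\sqsubset\rangle$ is scattered. Finally, weak monotonicity of $f$ with respect to $\sqsubset$ yields continuity of $f$ with respect to $\downarrow_\sqsubset$ (the lemma following Definition \ref{defCH} in the Kripke setting), hence continuity with respect to $\tau_\sqsubset$. Thus $\langle W,\tau_\sqsubset, f, \nu\rangle$ is a finite dynamic topological system based on a scattered space that refutes $\varphi$, giving both completeness and the FMP at once.

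There is essentially no obstacle here, which is why the paragraph preceding the statement describes the result as \emph{almost immediate}: the irreflexive, transitive, finite dynamic Kripke frames produced by Theorem \ref{wk4c} already sit inside the class of scattered dynamic topological systems once one passes to the up-set topology. The only thing worth double-checking is that each of the three modal clauses (Boolean, $\lozenge$, $\bc$) is genuinely preserved under the translation $\mathfrak F \mapsto \langle W,\tau_\sqsubset, f\rangle$, but this is exactly what Lemma \ref{lemmDtau} and the continuity lemma for weakly monotonic maps guarantee.
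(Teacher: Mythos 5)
Your proof is correct and follows essentially the same route as the paper: soundness via Theorem \ref{thmGLComp} and Lemma \ref{lemCH}, and completeness by taking the finite, transitive, irreflexive refuting frame from Theorem \ref{wk4c} and viewing it as a scattered Aleksandroff dynamic topological system via Lemmas \ref{lemmDtau} and \ref{lemScatteredKrip}. Your version is slightly more explicit about checking continuity and the agreement of the semantics, but the argument is the same.
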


\begin{proof}
Soundness follows from Theorem~\ref{thmGLComp} and Lemma~\ref{lemCH}.
For completeness, suppose that $\nvdash_{\bold{GLC}}\varphi$.
Then by Theorem~\ref{wk4c} there is a finite model $\mathfrak M = \langle W,\sqsubset  ,g,\nu\rangle$ based on a transitive, irreflexive dynamic derivative frame such that $\mathfrak M \not\models\varphi$.
But by Lemmas~\ref{lemmDtau} and \ref{lemScatteredKrip}, setting $\tau=\tau_\sqsubset  $, we see that $d_\tau=d_\sqsubset$ and $\langle W,\tau,f\rangle$ is a scattered dynamic topological system, which is identical to $\mathfrak M$ as a dynamic derivative system and thus also refutes $\varphi$.
\end{proof}

In order to prove topological $d$-completeness for $\bold{wK4C}$, we must make a small modification to our Kripke frames.
Basically, weakly transitive frames are not necessarily isomorphic to any topological space, but weakly transitive {\em irreflexive} frames are.
Fortunately, any weakly transitive frame is bisimilar to an irreflexive frame.
The following construction is well known; see e.g.\ \cite{EsakiaAlgebra}.

\begin{defi}
Let $\mathfrak{F}=\langle W,\sqsubset   ,g\rangle$ be a $\bold{wK4C}$-frame and let $W^\mathrm i$ and $W^\mathrm r$ be the sets of irreflexive and reflexive points respectively. 
We define a new frame $\mathfrak{F}_\oplus=\langle W_\oplus,\sqsubset  _\oplus ,g_\oplus\rangle$, where
\begin{enumerate}

\item $W_\oplus = (W^{\rm i}\times \{0\})\cup (W^{\rm r}\times \{0,1\})$;

\item $(w,i) \sqsubset   _\oplus (v,j)$ iff $w\sqsubset   v$ and $(w,i) \neq (v,j)$;

\item $g_\oplus(w,i) = (g(w),0)$.

\end{enumerate}
\end{defi}

The following is standard \cite{EsakiaAlgebra} and easily verified.

\begin{prop}\label{propOplus}
If $\mathfrak{F}=\langle W,\sqsubset   ,g\rangle$ is any dynamic derivative frame, then $\mathfrak {F}_\oplus$ is an irreflexive dynamic derivative frame and $\pi\colon W_\oplus \to W $ given by $\pi(w,i)=w$ is a surjective, dynamic $p$-morphism.
\end{prop}

Proposition~\ref{propOplus} allows us to obtain topological completeness from Theorem~\ref{wk4c}, as Lemma~\ref{lemmDtau} tells us that irreflexive Kripke frames are essentially Cantor derivative spaces.
We thus obtain the following:

\begin{restatable}{thm}{topowkc}
	$\bold{wK4C}$ is the $d$-logic of all dynamic topological systems. Furthermore, it has the finite model property for this class.
\end{restatable}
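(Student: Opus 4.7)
The plan is to mirror the proof just given for $\bold{GLC}$, but with an extra preprocessing step to eliminate reflexive points. Soundness is immediate from Lemma \ref{lemCH} combined with Esakia's classical result that $\bold{wK4}$ is $d$-sound for every topological space. For completeness and the finite model property, I would start from a refuting finite Kripke model and then pass to an irreflexive one, using Lemma \ref{lemmDtau} to convert it into a topological space whose Cantor derivative coincides with the Kripke accessibility relation.

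More concretely: suppose $\not\vdash_{\bold{wK4C}}\varphi$. By Theorem \ref{wk4c} there is a finite dynamic derivative frame $\mathfrak F = \langle W,\sqsubset,g\rangle$ (weakly transitive $\sqsubset$, weakly monotonic $g$) with a valuation refuting $\varphi$ at some $w_0\in W$. The relation $\sqsubset$ may have reflexive points, which blocks a direct topological reading of $\sqsubset$ as a Cantor derivative. I would then apply the $\oplus$-construction to obtain $\mathfrak F_\oplus = \langle W_\oplus,\sqsubset_\oplus,g_\oplus\rangle$ together with the projection $\pi\colon W_\oplus\to W$. Proposition \ref{propOplus} tells me that $\mathfrak F_\oplus$ is irreflexive and that $\pi$ is a surjective dynamic $p$-morphism, so pulling back the valuation along $\pi$ gives a model based on $\mathfrak F_\oplus$ that refutes $\varphi$ at any preimage of $w_0$.

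Next, I need to check that $\mathfrak F_\oplus$ is itself a dynamic derivative frame of the right kind, i.e.\ that $\sqsubset_\oplus$ is weakly transitive and $g_\oplus$ is weakly monotonic. Weak transitivity is a routine case analysis: if $(w,i)\sqsubset_\oplus(v,j)\sqsubset_\oplus(u,k)$ then $w\sqsubset v\sqsubset u$, so $w\sqsubseteq u$ by weak transitivity of $\sqsubset$, and one verifies that either $(w,i)=(u,k)$ or $(w,i)\sqsubset_\oplus(u,k)$ depending on whether $w=u$ (which forces $v$ to lie in the cluster of $w$ and hence $w$ to be reflexive, so that the extra copy indexed by $1$ behaves correctly). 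Weak monotonicity of $g_\oplus$ is immediate because $g_\oplus$ always lands in the copy indexed by $0$. Once this is in place, Lemma \ref{lemmDtau} gives $d_{\tau_{\sqsubset_\oplus}}= \downarrow_{\sqsubset_\oplus}$, so equipping $W_\oplus$ with the Aleksandroff topology $\tau_{\sqsubset_\oplus}$ yields a finite dynamic topological system $\langle W_\oplus,\tau_{\sqsubset_\oplus},g_\oplus\rangle$ which, as a dynamic derivative model, coincides with $\mathfrak F_\oplus$ and hence refutes $\varphi$.

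The only nontrivial point — and the main, if mild, obstacle — is the compatibility of the $\oplus$-construction with the dynamic map: one must confirm that $g_\oplus$ remains weakly monotonic (hence continuous for $\tau_{\sqsubset_\oplus}$) and that $\pi$ really is a dynamic $p$-morphism preserving satisfaction of formulas involving $\bc$. Both facts are essentially hard-wired into the construction, since $g_\oplus$ projects onto the $0$-copy and $\pi\circ g_\oplus = g\circ \pi$ by definition. The finite model property is automatic once this is established, because $|W_\oplus|\le 2|W|$.
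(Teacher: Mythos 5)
Your proposal is correct and follows essentially the same route as the paper: soundness via Lemma \ref{lemCH}, then Kripke completeness from Theorem \ref{wk4c}, the $\oplus$-construction with Proposition \ref{propOplus} to pass to an irreflexive frame via a surjective dynamic $p$-morphism, and Lemma \ref{lemmDtau} to read the result as a dynamic topological system. The extra verifications you sketch (weak transitivity of $\sqsubset_\oplus$, weak monotonicity of $g_\oplus$) are exactly the content of Proposition \ref{propOplus}, which the paper cites as standard.
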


\begin{proof}
Soundness follows from Lemma~\ref{lemCH}.
For completeness, suppose $\bold{wK4C}\nvdash\varphi$. Then by Theorem~\ref{wk4c} there exists a finite $\bold{wK4C}$-frame $\mathfrak{F}=\langle W,\sqsubset   , g\rangle$ such that $\mathfrak{F}\not\models\varphi$. But then, by Proposition~\ref{propOplus}, $\mathfrak{F}_\oplus = \langle W_\oplus,\sqsubset   _\oplus,g _\oplus \rangle$ is an irreflexive dynamic derivative system and $\pi \colon W_\oplus \to W$ is a surjective, dynamic $p$-morphism.
But by Lemma~\ref{lemmDtau} and by setting $\tau=\tau_\sqsubset  $ we see that $d_\sqsubset   = d_\tau$, so $\mathfrak{F}_\oplus$ can be seen as a dynamic topological system with the same semantics, also falsifying $\varphi$.
\end{proof}
Finally we turn our attention to $\bold{K4C}$.
Unlike the other two logics, $\bold{K4C}$ (or even $\bold{K4}$) does not have the topological finite model property, despite having the Kripke finite model property.
In the case of Aleksandrov spaces, the class $T_D$ is easy to describe.

\begin{lem}\label{lemmSingleton}
The Aleksandrov space of a $\bold{K4}$-frame $\mathfrak F = \langle W,\sqsubset   \rangle$ is $T_D$ if and only if $\sqsubset  $ is antisymmetric, in the sense that $w\sqsubset   v\sqsubset w$ implies $w=v$.
\end{lem}

If we moreover want the Kripke and the $d$-semantics to coincide on $\mathfrak F $, we need $\sqsubset$ to be irreflexive. Thus we wish to `unwind' $\mathfrak F$ to get rid of all non-trivial clusters.
The following construction achieves this.

\begin{defi}
Let $\mathfrak F = \langle W,\sqsubset   ,g\rangle$ be a dynamic $\bold{K4}$ frame.
We define a new frame
$\vec{ \mathfrak F} = \langle \vec W, \vec\sqsubset   , \vec g\rangle, $
where
\begin{itemize}

\item $\vec W$ is the set of all finite sequences $(w_0,\ldots,w_n)$, where $w_i \sqsubset   w_{i+1}$ for all $i<n$;

\item $\bold{w} \mathbin{\vec{\sqsubset}}  \bold{v}$ iff $\bold{w}$ is a strict initial segment of $\bold{v}$;

\item $\vec g (w_0,\ldots,w_n) $ is the subsequence of $(g(w_0), \ldots, g(w_n))$, obtained by deleting every entry that is equal to its immediate predecessor.

\end{itemize}
We then define a map $\pi\colon \vec W \to W$, where $\pi (w_0,\ldots,w_n) = w_n $.
\end{defi}
In the definition of $\vec g$, note that $g$ is only weakly monotonic, so it may be that, for instance, $g(w_0) = g(w_1)$; in this case, we include only one copy of $g(w_0)$ to ensure that $\vec g(\bold{w}) \in \vec W$.

\begin{restatable}{prop}{wind}\label{propUnwinding}
If $\mathfrak F = \langle W,\sqsubset   ,g\rangle$ is any dynamic $\bold{K4}$ frame, then $\vec {\mathfrak F}$ is an antisymmetric and irreflexive dynamic $\bold{K4}$ frame.
Moreover, $\pi \colon \vec W \to W$ is a surjective, dynamic $p$-morphism.
\end{restatable}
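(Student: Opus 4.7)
The plan is to verify the listed properties of $\vec{\mathfrak F}$ and $\pi$ in order: first the frame-theoretic conditions on $\langle \vec W,\vec\sqsubset\rangle$, then the well-definedness and weak monotonicity of $\vec g$, and finally the three conditions of a dynamic $p$-morphism for $\pi$. The main subtlety will lie in showing $\vec g(\bold w)\in\vec W$, since the definition of $\vec g$ as a ``duplicate-deleting'' map is precisely what is needed to cope with the fact that $g$ is only weakly (not strictly) monotonic.

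For the frame conditions, I would start by noting that $\vec\sqsubset$ is the strict initial segment relation on finite sequences, so it is trivially irreflexive, antisymmetric, and transitive (the last point using transitivity of $\sqsubset$ itself, which holds as $\mathfrak F$ is a $\mathbf{K4}$ frame). This already gives that $\vec{\mathfrak F}$, once $\vec g$ is verified, is an antisymmetric, irreflexive dynamic $\mathbf{K4}$ frame.

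Next I would check that $\vec g(\bold w)\in\vec W$ for every $\bold w=(w_0,\dots,w_n)\in\vec W$. Write $\vec g(\bold w)=(g(w_{i_0}),\dots,g(w_{i_k}))$ with $0=i_0<\cdots<i_k\le n$ the indices retained after the deletion procedure. For consecutive retained indices $i_j<i_{j+1}$, each of the deleted intermediate entries satisfies $g(w_{i_j})=g(w_{i_j+1})=\cdots=g(w_{i_{j+1}-1})$, while the retention of $i_{j+1}$ forces $g(w_{i_{j+1}-1})\neq g(w_{i_{j+1}})$. Since $g$ is weakly monotonic and $w_{i_{j+1}-1}\sqsubset w_{i_{j+1}}$, this inequality upgrades to $g(w_{i_{j+1}-1})\sqsubset g(w_{i_{j+1}})$, and hence $g(w_{i_j})\sqsubset g(w_{i_{j+1}})$. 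This is exactly the chain condition for membership in $\vec W$, and this step is where I expect the bulk of the care to be needed. Weak monotonicity of $\vec g$ then follows because, for $\bold w\vec\sqsubset\bold v$, applying the deletion procedure to the longer underlying sequence produces the deletion applied to $\bold w$ as an initial segment of the deletion applied to $\bold v$, so $\vec g(\bold w)\vec\sqsubseteq\vec g(\bold v)$.

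For $\pi$, surjectivity is immediate since $\pi((w))=w$. The forth condition is straightforward: if $\bold w\vec\sqsubset\bold v$ and $\bold w=(w_0,\dots,w_n)$ is a strict initial segment of $\bold v=(w_0,\dots,w_n,v_{n+1},\dots,v_m)$, then the chain property of $\bold v$ combined with transitivity yields $w_n\sqsubset v_m$, i.e.\ $\pi(\bold w)\sqsubset\pi(\bold v)$. For the back condition, given $\pi(\bold w)\sqsubset u$, simply set $\bold v:=(w_0,\dots,w_n,u)\in\vec W$; then $\bold w\vec\sqsubset\bold v$ and $\pi(\bold v)=u$. Finally, the commutation $\pi\circ\vec g=g\circ\pi$ reduces to the observation that the deletion procedure never removes the last entry of a sequence (the last entry has no successor forcing its removal), so the last coordinate of $\vec g(w_0,\dots,w_n)$ is always $g(w_n)$, which equals $g(\pi(\bold w))$. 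This completes the verification.
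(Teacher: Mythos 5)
Your proof is correct and follows essentially the same route as the paper's, which merely asserts that weak monotonicity of $\vec g$ ``can be checked from the definitions'' where you spell out the key check that $\vec g(\mathbf w)\in\vec W$ via the retained-index argument. One small slip: deletion is triggered by equality with the \emph{predecessor}, not the successor, so the literal last entry of $(g(w_0),\dots,g(w_n))$ \emph{can} be deleted; but every deleted trailing entry equals $g(w_n)$, so the last coordinate of $\vec g(\mathbf w)$ is still $g(w_n)$ and your commutation argument goes through unchanged.
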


\begin{proof}
It is easy to see that $\vec {\mathfrak F}$ is antisymmetric and irreflexive. In order to show that $\vec{\mathfrak{F}}$ is a dynamic $\bold{K4}$ frame we need to show that it is transitive and $\vec g $ is a weakly monotonic function.
Suppose $\bold{w}\mathbin{\vec{\sqsubset}}   \bold{v}\mathbin{\vec{\sqsubset}}   \bold{u}$, then clearly since $\bold{w}$ is an initial segment of $\bold{v}$, it is also an initial segment of $\bold{u}$ and therefore $\bold{w}\mathbin{\vec{\sqsubset}}   \bold{u}$ and thus $\vec{\sqsubset}$ is transitive. The functionality of $\vec{g}$ follows from the functionality of $g$, and weak monotonicity can be checked from the definitions unsing the weak monotonicity of $g$. 
Finally, we show that $\pi$ is a surjective $p$-morphism. It is clearly surjective.
Next, we note that if ${\bf w} \mathbin{\vec{\sqsubset}}     {\bf v}$ then transitivity plus $w_n=v_n$ yields $w_n \sqsubset    v_i$ for all $i\in(n,m]$, and in particular $w_n\sqsubset    v_m$, that is, $\pi( {\bf w}) \sqsubset \pi( {\bf v})$.
Moreover, if $\pi({\bf w}) \sqsubset    u $, then $w_n\sqsubset    u$ by definition of $\pi$, which means that ${\bf u} := (w_1,\ldots,w_n ,u) \in \vec W$ is such that ${\bf w} \mathrel{\mathbin{\vec{\sqsubset}}   } {\bf u}$ and $\pi({\bf u}) = u$.
Furthermore, $\pi(\vec{g}(w_0,\dots, w_n))=g(w_n)=g(\pi(w_0,\dots w_n))$ and thus $\pi$ is a $p$-morphism.
\end{proof}

Similar constructions have already appeared in e.g.\ \cite{konev}.
From this we obtain the following:

\begin{restatable}{thm}{topokc}
$\bold{K4C}$ is the $d$-logic of all dynamic topological systems based on a $T_D$ space, as well as the $d$-logic of all dynamic topological systems based on an Aleksandrov $T_D$ space.
\end{restatable}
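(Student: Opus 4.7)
The plan is to closely mirror the strategy used for $\bold{wK4C}$, but replacing the doubling construction $\mathfrak F \mapsto \mathfrak F_\oplus$ with the unwinding construction $\mathfrak F \mapsto \vec{\mathfrak F}$ of Proposition \ref{propUnwinding}. Soundness is immediate: by Lemma \ref{lemCH} it suffices that $\bold{K4}$ is sound for all $T_D$ derivative spaces, which is recorded in the theorem attributed to Esakia in Section \ref{secDTL}.

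For completeness, suppose $\bold{K4C}\not\vdash\varphi$. By Theorem \ref{wk4c}, there is a finite transitive, weakly monotonic dynamic derivative Kripke frame $\mathfrak F=\langle W,\sqsubset,g\rangle$ and a valuation on $\mathfrak F$ that refutes $\varphi$. Applying Proposition \ref{propUnwinding} to $\mathfrak F$, we obtain $\vec{\mathfrak F}=\langle \vec W,\vec\sqsubset,\vec g\rangle$, which is a transitive, antisymmetric, irreflexive dynamic $\bold{K4}$ frame together with a surjective dynamic $p$-morphism $\pi\colon \vec W\to W$. Pulling back the refuting valuation along $\pi$ and invoking the standard $p$-morphism argument (which goes through for dynamic $p$-morphisms since they commute with both $\sqsubset$ and $g$), we conclude that $\vec{\mathfrak F}$ also refutes $\varphi$.

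It remains to present $\vec{\mathfrak F}$ as a dynamic topological system based on an Aleksandroff $T_D$ space. Set $\tau=\tau_{\vec\sqsubset}$. Since $\vec\sqsubset$ is irreflexive, Lemma \ref{lemmDtau} yields $d_\tau=\downarrow_{\vec\sqsubset}$, so that $\langle \vec W,\tau,\vec g\rangle$ and $\vec{\mathfrak F}$ induce the same derivative space and hence the same semantics for $\mathcal L^\circ_\dd$; continuity of $\vec g$ with respect to $\tau$ follows from its weak monotonicity. Since $\vec\sqsubset$ is also transitive and antisymmetric, Lemma \ref{lemmSingleton} tells us that $\tau$ is a $T_D$ topology. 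Thus $\langle \vec W,\tau,\vec g\rangle$ is a dynamic topological system based on an Aleksandroff $T_D$ space that refutes $\varphi$, which simultaneously witnesses completeness for the class of all $T_D$ dynamic topological systems and for the subclass of Aleksandroff $T_D$ dynamic topological systems.

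The only genuinely nontrivial step is the unwinding itself, which is already packaged in Proposition \ref{propUnwinding}; the remaining obstacle is merely verifying that refutations transfer along dynamic $p$-morphisms, which is standard. Note that, in contrast to the cases of $\bold{wK4C}$ and $\bold{GLC}$, the resulting topological model need not be finite, because $\vec W$ can be infinite whenever $\mathfrak F$ contains a non-trivial cluster; this is consistent with the well-known failure of the topological finite model property for $\bold{K4}$, and is the reason why the statement does not assert an FMP here.
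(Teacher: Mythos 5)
Your proposal is correct and follows essentially the same route as the paper: Kripke completeness via Theorem \ref{wk4c}, unwinding with Proposition \ref{propUnwinding}, transfer of the refutation along the surjective dynamic $p$-morphism, and identification of $\vec{\mathfrak F}$ with an Aleksandroff $T_D$ space via Lemmas \ref{lemmDtau} and \ref{lemmSingleton}. Your closing remark on the loss of finiteness when $\mathfrak F$ has a non-trivial cluster also matches the paper's own observation that $\bold{K4C}$ lacks the topological finite model property.
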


\begin{proof}
Soundness follows from soundness for $\bold{wK4C}$ and Lemma~\ref{lemmSingleton}.
For completeness, if $\bold{K4C} \not \vdash \varphi$, by Theorem~\ref{wk4c} there is a $\bold{K4C}$ model $\mathfrak M = \langle W,\sqsubset   ,g,V \rangle$ falsifying $\varphi$.
Let $\mathfrak F = \langle W,\sqsubset   ,g  \rangle$.
But by Proposition~\ref{propUnwinding}, $\pi\colon \vec W\to W$ is a surjective $p$-morphism, which implies that $\varphi$ is not valid on $ \vec{\mathfrak F}$.
By Proposition~\ref{propUnwinding}, $\vec\sqsubset   $ is antisymmetric, hence by Lemma~\ref{lemmSingleton}, the Aleksandrov topology of $ \vec{\mathfrak F}$ is $T_D$; moreover, $\vec\sqsubset    $ is also irreflexive, so we see that $\varphi$ is falsified on this $T_D$ Aleksandrov space as well.
\end{proof}
\section{Invertible Systems}

Recall that if $\lb X,\tau\rb$ is a topological space and $f\colon X\to X$ is a function, then $f$ is a {\em homeomorphism} if it is a bijection and both $f$ and $f^{-1}$ are continuous.

Unlike in the continuous case, for logics with homeomorphisms, every formula is equivalent to a formula where all occurrences of $\bc$ are applied to atoms.
More formally, we say that a formula is in {\em $\bc$-normal form} if it is of the form $\varphi(\bc^{k_0}p_0,\ldots,\bc^{k_n}p_n)$, where $\varphi(p_0,\ldots,p_n)$ does not contain $\bc$, the $p_i$'s are variables, and the $k_i$'s are natural numbers; in other words, $\varphi$ is in $\bc$=normal form if the only occurrences of $\bc$ are of the form $\bc^{k}p$ where $p$ is atomic.
It is readily observed that the axiom {\rm H}, together with ${\rm Next_\neg}$ and ${\rm Next_\wedge}$, allows us to `push' all instances of $\bc$ to the propositional level. Thus we obtain the following useful representation lemma:

\begin{restatable}{lem}{bcnormal}\label{lemmBCNormal}
For every logic $\Lambda$ and every formula $\varphi$, there is a formula $\varphi'$ in $\bc$-normal form such that $\Lambda{\bf H}\vdash \varphi\leftrightarrow \varphi'$.
\end{restatable}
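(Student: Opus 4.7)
The strategy is a straightforward induction on the complexity of $\varphi$, whose content is entirely contained in the observation that, over $\Lambda{\bf H}$, the operator $\bc$ commutes with every Boolean and modal connective. Commutation with $\wedge$ and $\neg$ is immediate from the axioms ${\rm Next}_\wedge$ and ${\rm Next}_\neg$. Commutation with $\dn$ is axiom ${\rm H}$, and commutation with $\lozenge$ follows by unfolding $\lozenge\psi = \neg\dn\neg\psi$ and applying ${\rm Next}_\neg$ and ${\rm H}$ in turn, so that $\Lambda{\bf H}\vdash \bc\lozenge\psi \leftrightarrow \lozenge\bc\psi$.

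The plan is first to establish, as a preliminary lemma by a routine induction on the structure of a $\bc$-free formula $\chi(p_0,\ldots,p_n)$, that for all formulas $\psi_0,\ldots,\psi_n$,
$$ \Lambda{\bf H} \vdash \bc\chi(\psi_0,\ldots,\psi_n)\; \leftrightarrow\; \chi(\bc\psi_0,\ldots,\bc\psi_n). $$
The atomic case is trivial; the Boolean cases use ${\rm Next}_\wedge$ and ${\rm Next}_\neg$; and the modal case uses the derived equivalence $\bc\lozenge\chi'\leftrightarrow\lozenge\bc\chi'$ together with the inductive hypothesis (and the fact that $\lozenge$ respects provable equivalence).

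With this in hand, I would prove the main lemma by induction on the complexity of $\varphi$. If $\varphi = p$, set $\varphi' := p$. If $\varphi = \neg\psi$, $\varphi = \psi_1\wedge\psi_2$, or $\varphi = \lozenge\psi$, apply the inductive hypothesis to the immediate subformulas to obtain $\bc$-normal equivalents and reassemble; the result is still a Boolean/modal combination of terms of the form $\bc^{k_i}p_i$, hence in $\bc$-normal form. The interesting case is $\varphi = \bc\psi$: by the inductive hypothesis we get $\psi'=\chi(\bc^{k_0}p_0,\ldots,\bc^{k_n}p_n)$ with $\chi$ free of $\bc$ and $\Lambda{\bf H}\vdash \psi\leftrightarrow\psi'$. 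Applying Nec$_\bc$ and K gives $\Lambda{\bf H}\vdash \bc\psi\leftrightarrow \bc\psi'$, and then the preliminary lemma yields
$$ \Lambda{\bf H} \vdash \bc\psi' \leftrightarrow \chi(\bc^{k_0+1}p_0,\ldots,\bc^{k_n+1}p_n), $$
which is the required $\bc$-normal form.

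There is no serious obstacle: the only thing to verify carefully is the commutation $\bc\lozenge\leftrightarrow\lozenge\bc$, which is a short equational derivation from ${\rm H}$ and ${\rm Next}_\neg$ via the dual definition of $\lozenge$. Everything else is bookkeeping on the structure of formulas.
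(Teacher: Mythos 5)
Your proposal is correct and follows essentially the same route as the paper's proof: a standard induction on the complexity of $\varphi$, using ${\rm Next}_\neg$, ${\rm Next}_\wedge$ and the two directions of ${\rm H}$ to push $\bc$ past the Boolean connectives and past $\square$ (hence, dually, past $\lozenge$) down to the atoms. Your preliminary substitution lemma for $\bc$-free contexts is just a careful formalisation of the same ``push $\bc$ inward'' argument the paper sketches, and the handling of the $\bc\psi$ case via ${\rm Nec}_\bc$ and congruence is exactly what is needed.
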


\begin{proof}
This is a standard proof by induction on the complexity of $\varphi$. In particular, note that the axioms $\square\bc p\rightarrow \bc \square p$ and $\bc\square p\rightarrow\square \bc p$ permit the interchangeability between $\bc$ and $\square$, thus allow us to push $\bc$ towards the atomic part of the formula. 
\end{proof}

As we shall see shortly, $\Lambda{\bf H}$ inherits completeness and the finite model property almost immediately from $\Lambda$.
If $\mathfrak X = \langle X, \rho_X\rangle$ and $\mathfrak Y = \langle Y,\rho_Y\rangle$ are derivative spaces, then the {\em topological sum} is defined as $\mathfrak X\oplus \mathfrak Y = (X\oplus Y,\rho_X\oplus \rho_Y)$, where $X\oplus Y$ is the disjoint union of the two sets, and $\rho_X\oplus \rho_Y$ is given by $(\rho_X\oplus \rho_Y) A = \rho_X(A\cap X) \cup \rho_Y(A\cap Y)$.
We say that a class $\Omega$ of derivative spaces is {\em closed under sums} if whenever $\mathfrak A,\mathfrak B\in \Omega$, it follows that $\mathfrak A \oplus \mathfrak B \in \Omega$.

Given a derivative space $\mathfrak A = \langle A,\rho\rangle$, we may write $\mathfrak A^n$ instead of $\mathfrak A\oplus \mathfrak A\oplus \ldots \oplus \mathfrak A$ ($n$ times), and if $\mathfrak A$ has domain $A$, we may identify the domain of $\mathfrak A^n$ with $A^n = A\times \{0,\ldots,n-1\}$.
It should be clear that if $\mathfrak A\in \Omega$ and $\Omega$ is closed under sums, then $\mathfrak A^n\in\Omega$.
Let $(m)_n$ denote the remainder of $m$ modulo $n$.
We then define a dynamical structure $\mathfrak A^{(n)} = (\mathfrak A^n,f)$, where $f\colon A^n\to A^n$ is given by $f(w,i) = (w,(i+1)_n)$.

\begin{restatable}{lem}{invert}\label{lemmIsInvert}
Let $\Omega$ be a class of derivative spaces closed under sums. Then, if $\mathfrak A\in\Omega$, it follows that $\mathfrak A^{(n)}$ is an invertible dynamic derivative system based on an element of $\Omega$.
\end{restatable}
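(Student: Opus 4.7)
My plan is to unpack the definition of an invertible dynamic derivative system and verify each of its clauses. First, since $\mathfrak A \in \Omega$ and $\Omega$ is closed under binary sums, an easy induction on $n$ yields $\mathfrak A^n \in \Omega$, which already gives the ``based on an element of $\Omega$'' part of the conclusion.

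Next, I would observe that $f\colon A^n \to A^n$ given by $f(w,i) = (w, (i+1)_n)$ is a bijection, with inverse $f^{-1}(w,j) = (w, (j-1)_n)$. Continuity of $f$ will then follow from the sharper homeomorphism property verified in the next step, so there is nothing further to do here.

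The main step is then to verify the homeomorphism equation $\rho_n(f^{-1}(B)) = f^{-1}(\rho_n(B))$ of Definition \ref{defCH}, where $\rho_n$ denotes the sum derivative on $\mathfrak A^n$. Given $B \subseteq A^n$, I would decompose $B$ slice-wise by setting $B_i := \{w \in A : (w,i) \in B\}$. By an $n$-fold application of the definition of the sum derivative,
$$\rho_n(B) = \bigsqcup_{i<n} \rho(B_i) \times \{i\}.$$
A direct computation gives $(f^{-1}(B))_j = B_{(j+1)_n}$, so both $\rho_n(f^{-1}(B))$ and $f^{-1}(\rho_n(B))$ evaluate to $\bigsqcup_{j<n} \rho(B_{(j+1)_n}) \times \{j\}$, establishing the desired equality. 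Hence $f$ is a homeomorphism and $\mathfrak A^{(n)}$ is an invertible dynamic derivative system based on $\mathfrak A^n \in \Omega$.

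I do not foresee any substantive obstacle: the whole argument is a matter of tracking cyclic indices, and the equality is essentially automatic because the sum derivative treats each slice independently while $f$ acts as a slice-wise permutation of $\mathfrak A^n$.
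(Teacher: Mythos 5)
Your proposal is correct and follows essentially the same route as the paper: the paper's proof also notes that $f$ is a bijection (as a union of disjoint cycles), decomposes a set $Y\subseteq A^n$ into slices $Y_i=\{x:(x,i)\in Y\}$, and verifies $d'f^{-1}Y=\bigcup_{i<n}(dY_i\times\{(i-1)_n\})=f^{-1}d'Y$ by the same index-shifting computation. Your write-up is slightly more explicit about membership of $\mathfrak A^n$ in $\Omega$ and about continuity following from the homeomorphism identity, but these are points the paper treats in the surrounding text rather than genuine differences of approach.
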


\begin{proof}
Suppose $\mathfrak A = \langle X,d\rangle \in\Omega$ is a derivative space.
Let $\mathfrak A^{(n)}=(X^n,d',f)$. Clearly $f$ is a bijection as it defines disjoint cycles.
If $Y\subseteq X^n$, let $Y_i = \{x \in X: (x,i) \in Y\}$.
Then, $d' f^{-1} Y = \bigcup_{i<n} (d Y_i,(i-1)_n) = f^{-1} \bigcup_{i<n} (d Y_i,i) = f^{-1} d' Y$.
So, $ \mathfrak A^{(n)}$ is a dynamic derivative frame. 
\end{proof}
For our proof of completeness, we need the notion of {\em extended valuation.}

\begin{defi}
Let $\mathfrak A = \langle A,\rho \rangle$ be a derivative space.
Let $\mathsf{PV}^\bc$ be the set of all expressions $\bc^i p$, where $i\in\mathbb N$ and $p$ is a propositional variable.
An {\em extended valuation} on $\mathfrak A$ is a relation $\nu \subseteq \mathsf{PV}^\bc \times A $.

If $\nu$ is an extended valuation on $\mathfrak A$, we define a valuation on $\mathfrak A^{(n)}$ so that for any variable $p$ and $(w,i)\in A^n$, $(w,i)\in \nu^{(n)}(p)$ if and only if $ w \in \nu (\bc^i p) $.
\end{defi}

\begin{restatable}{lem}{npreserve}\label{lemmNPreserve}
Let $\mathfrak A = \langle A,\rho\rangle$ be any derivative space and $ \nu $ be any extended valuation on $\mathfrak A$.
Let $w\in A$, $i<n$, and $\varphi$ be any formula in $\bc$-normal form which has $\bc$-depth less than $n-i$.
Then, $\langle \mathfrak A^{(n)}, \nu ^{(n)}\rangle ,(w,i) \models \varphi$ if and only if
$\langle \mathfrak A , \nu \rangle , w  \models \varphi.$
\end{restatable}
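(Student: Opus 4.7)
The plan is to argue by induction on the complexity of $\varphi$, exploiting the hypothesis that $\varphi$ is in $\bc$-normal form so every $\bc$ is applied directly to a propositional variable. Throughout the induction, the bound on $\bc$-depth is preserved when passing to subformulas, so the inductive hypothesis always applies with the same parameters $w$ and $i$.

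First I would dispatch the base case, in which $\varphi = \bc^{k}p$ for some $k < n-i$ (the bound comes from the hypothesis on $\bc$-depth; for an atom $p$ itself, take $k=0$). Here the calculation is direct: by iterating the definition of $f$ one obtains $f^{k}(w,i) = (w,(i+k)_n)$, and since $k+i < n$ the residue is simply $i+k$. The definition of $\nu^{(n)}$ then yields
\[
\langle \mathfrak A^{(n)},\nu^{(n)}\rangle,(w,i) \models \bc^{k}p \iff w \in \nu(\bc^{\,i+k}p),
\]
which is the intended reading of $\bc^{k}p$ at $w$ on $\mathfrak A$ under the extended valuation (the offset $i$ being the ``starting time''). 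Boolean connectives are immediate from the induction hypothesis applied to their immediate subformulas, which have the same or smaller $\bc$-depth.

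For the modal case $\varphi = \lozenge\psi$, the key observation is the one already built into the definition of the topological sum: the derivative of $\mathfrak A^{n}$ decomposes along the $n$ copies. Writing $S_{j} = \{v \in A : (v,j) \in S\}$ for any $S \subseteq A \times \{0,\dots,n-1\}$, one has
\[
(w,i) \in \rho_{\mathfrak A^{n}}(S) \iff w \in \rho_{\mathfrak A}(S_{i}).
\]
Applying this with $S = \|\psi\|_{\mathfrak A^{(n)}}$, satisfaction of $\lozenge\psi$ at $(w,i)$ reduces to satisfaction of $\psi$ on the $i$-th copy, to which the induction hypothesis applies since $\psi$ has $\bc$-depth strictly less than $n-i$. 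Hence $S_{i} = \|\psi\|_{\mathfrak A,\nu}$ and the equivalence for $\lozenge\psi$ follows.

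I expect the main obstacle to be careful bookkeeping of the shift by $i$ in the atomic case: one must verify that the bound $k < n-i$ really does prevent the modular wraparound in $f^{k}(w,i)$, since otherwise the $\nu^{(n)}$-membership would involve $\nu(\bc^{(i+k)_{n}}p)$ rather than $\nu(\bc^{\,i+k}p)$ and the equivalence with the $\mathfrak A$-side could fail. Once this is pinned down, the Boolean case is trivial and the $\lozenge$ case is a direct consequence of the sum-structure of $\rho_{\mathfrak A^{n}}$, so the induction closes without further difficulty.
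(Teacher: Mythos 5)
Your proof is correct and follows essentially the same route as the paper's: induction on the structure of the normal-form formula, with the atomic case $\bc^k p$ handled by computing $f^k(w,i)=(w,i+k)$ (no wraparound since $i+k<n$) and unfolding the definition of $\nu^{(n)}$, and the $\lozenge$ case handled by the copy-wise decomposition of the sum derivative. If anything you are more careful than the paper, whose one-line treatment of the atomic case writes $(w,j)\in\nu^{(n)}(p)$ and thus effectively treats only the instance $i=0$ (the only one used later in the completeness proof); your observation that for general $i$ the right-hand side must be read with the offset, i.e.\ as $w\in\nu(\bc^{\,i+k}p)$, is exactly the bookkeeping the paper elides.
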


\begin{proof}
We start with the right to left direction and we only show the case where $\varphi=\bc \psi $. Since $\varphi$ is in $\bc$-normal form then it is of the form $\bc^j p$. Since $j<n-i$ and by definition $(w,j)\in  \nu ^{(n)} p$, then $\langle \mathfrak A^{(n)}, \nu ^{(n)}\rangle ,(w,i) \models \bc^jp$. The other direction follows similarly.
The rest of the modalities and connectives follow by a standard induction on the build of $\varphi$.  
\end{proof}
It then readily follows that $\varphi$ is consistent if and only if $\varphi'$ is consistent, which implies that $\varphi'$ is satisfiable on $\Omega$. 
This is equivalent to $\varphi$ being satisfiable on the class of invertible systems based on $\Omega$.
From this, we obtain the following general result.  

\begin{restatable}{thm}{topoh}
Let $\Lambda$ be complete for a class $\Omega$ of derivative spaces.
Then, if $\Omega$ is closed under sums, it follows that $\Lambda{\bf H}$ is complete for the class of invertible systems based on $\Omega$.
\end{restatable}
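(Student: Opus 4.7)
The plan is a standard reduction argument that pushes all temporal content to the atomic level, applies completeness of $\Lambda$, and then packages the result into an invertible system using Lemmas \ref{lemmIsInvert} and \ref{lemmNPreserve}. Concretely, fix a $\Lambda{\bf H}$-consistent formula $\varphi$; I need to satisfy it on an invertible dynamic derivative system based on some element of $\Omega$.

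First I would apply Lemma \ref{lemmBCNormal} to obtain $\varphi'$ in $\bc$-normal form with $\Lambda{\bf H}\vdash \varphi\leftrightarrow\varphi'$, so $\varphi'$ is likewise $\Lambda{\bf H}$-consistent. Next I would strip the temporal layer: let $q_{i,p}$ be a fresh propositional variable for each expression $\bc^i p$ appearing in $\varphi'$, and let $\psi$ be the $\mathcal L_\dd$-formula obtained from $\varphi'$ by replacing every atomic subformula $\bc^i p$ with $q_{i,p}$.

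The key step is to verify that $\psi$ is $\Lambda$-consistent. Suppose towards a contradiction that $\Lambda\vdash \neg\psi$; uniformly substituting $\bc^i p$ for each $q_{i,p}$ in a proof yields $\Lambda\vdash \neg\varphi'$, and since $\Lambda\subseteq\Lambda{\bf H}$, this contradicts the $\Lambda{\bf H}$-consistency of $\varphi'$. (The substitution is legitimate because $\Lambda$ is a normal modal logic over $\mathcal L_\dd$ and is closed under uniform substitution of formulas for propositional variables.) By completeness of $\Lambda$ for $\Omega$, there exist $\mathfrak A=\langle A,\rho\rangle\in\Omega$, a valuation $\mu$ on $\mathfrak A$, and a point $w\in A$ with $\langle \mathfrak A,\mu\rangle,w\models\psi$.

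Finally I would lift the satisfying model to an invertible one. Define the extended valuation $\nu\subseteq \mathsf{PV}^\bc\times A$ by setting $\nu(\bc^i p)=\mu(q_{i,p})$ (and arbitrary values on the remaining expressions), and choose $n$ strictly larger than the $\bc$-depth of $\varphi'$. Form $\mathfrak A^{(n)}$, which by Lemma \ref{lemmIsInvert} is an invertible dynamic derivative system based on $\Omega$ (using closure of $\Omega$ under sums). Using Lemma \ref{lemmNPreserve} on the subformulas of $\varphi'$, truth of $\psi$ at $w$ under $\mu$ transfers to truth of $\varphi'$ at $(w,0)$ under $\nu^{(n)}$; soundness of the $\Lambda{\bf H}$-provable equivalence $\varphi\leftrightarrow\varphi'$ (Lemma \ref{lemCH}) then yields $\langle \mathfrak A^{(n)},\nu^{(n)}\rangle,(w,0)\models\varphi$, as required. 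The only delicate point is the substitution argument establishing $\Lambda$-consistency of $\psi$; once that is in place, the remaining steps are direct applications of the lemmas already proved.
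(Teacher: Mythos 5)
Your proposal is correct and follows essentially the same route as the paper's proof: normalise via Lemma \ref{lemmBCNormal}, satisfy the normal form on a space in $\Omega$ by completeness of $\Lambda$, and lift to $\mathfrak A^{(n)}$ via Lemmas \ref{lemmIsInvert} and \ref{lemmNPreserve}. In fact you make explicit a step the paper compresses into one line (``if $\varphi$ is consistent, then $\varphi'$ is consistent over $\Lambda$'') by introducing fresh atoms $q_{i,p}$ and arguing by substitution closure; the only cosmetic slip is writing $\Lambda\vdash\neg\varphi'$, which should read $\Lambda{\bf H}\vdash\neg\varphi'$ since $\varphi'$ lies outside $\mathcal L_\dd$.
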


\begin{proof}
Assume that $\Lambda$ and $\Omega$ are as in the statement of the theorem.
Let $\varphi$ be any formula, and using Lemma~\ref{lemmBCNormal}, let $\varphi'$ be a $\Lambda{\bf H}$-equivalent formula in $\bc$-normal form.
Let $n$ be the $\bc$-depth of $\varphi'$.
If $\varphi$ is consistent, then $\varphi'$ is consistent over $\Lambda$.
By the assumption, $\varphi'$ is satisfied on some $\mathfrak A =\langle A,\rho\rangle \in\Omega$.
Suppose it is satisfied on a point $w_*\in A$.
By Lemma~\ref{lemmIsInvert}, $\mathfrak A^{(n)}$ is an invertible system based on an element of $\Omega$, which by Lemma~\ref{lemmNPreserve} satisfies $\varphi'$ on $(w_*,0)$.
But $\Lambda{\bf H}$ is sound for the class of invertible systems based on $\Omega$, hence $\varphi$ is also satisfied on $(w_*,0)$.
\end{proof}
\begin{cor}\

\begin{enumerate}

\item $\bf wK4H$ is sound and complete for
\begin{enumerate}

\item The class of all finite invertible dynamic $\bf wK4$ frames.

\item The class of all finite invertible dynamic topological systems with Cantor derivative.

\end{enumerate}

\item $\bf K4H$ is sound and complete for
\begin{enumerate}

\item The class of all finite invertible dynamic $\bf K4$ frames.

\item The class of all invertible, $T_D$ dynamic topological systems with Cantor derivative.

\end{enumerate}

\item $\bf GLH$ is sound and complete for
\begin{enumerate}

\item The class of all finite invertible dynamic $\bf GL$ frames.

\item The class of all finite invertible, scattered dynamic topological systems with Cantor derivative.

\end{enumerate}

\end{enumerate}

\end{cor}

\color{black}

\section{Conclusion}

We have recast dynamic topological logic in the more general setting of derivative spaces and established the seminal results for the $\{\dd,\bc\}$-fragment for the variants of the standard logics with the Cantor derivative in place of the topological closure.
Semantics on the Cantor derivative give rise to a richer family of modal logics than their counterparts based on closure. This is evident in the distinction between e.g.\ the logics $\mathbf{wK4C}$ and $\mathbf{K4C}$, both of which collapse to $\mathbf{S4C}$ when replaced by their closure-based counterparts.
This line of research goes hand in hand with recent trends that consider the Cantor derivative as the basis of topological semantics \cite{FernandezIliev,Kudinov}.

There are many natural problems that remain open.
The logic $\mathbf{S4C}$ is complete for the Euclidean plane.
In the context of $d$-semantics, the logic of the Euclidean plane is a strict extension of $\mathbf{K4}$, given that punctured neighbourhoods are {\em connected} in the sense that they cannot be split into two disjoint, non-empty open sets.
Thus one should not expect $\mathbf{K4C}$ to be complete for the plane.
This raises the question: what is the dynamic $d$-logic of Euclidean spaces in general, and of the plane in particular?

The $d$-semantics also poses new lines of inquiry with respect to the class of functions considered.
We have discussed continuous functions and homeomorphisms.
Artemov et al.\ \cite{artemov} also considered arbitrary functions, and we expect that the techniques used by them could be modified without much issue for $d$-semantics.
On the other hand, in the setting of closure-based logics, the logic of spaces with continuous, open maps that are not necessarily bijective coincides with the logic of spaces with a homeomorphism.
This is no longer true in the $d$-semantics setting, as the validity of the $\rm H$ axiom requires injectivity.
Along these lines, the $d$-logic of {\em immersions} (i.e.\ continuous, injective functions) would validate the original continuity axiom $\bc\dn p\to\dn\bc p$.
Thus there are several classes of dynamical systems whose closure-based logics coincide, but are split by $d$-semantics.

Finally, there is the issue of extending our language to the trimodal language with the `henceforth' operator.
It is possible that the $d$-logic of all dynamic topological systems may be axiomatised using the tangled derivative, much as the tangled closure was used to provide an axiomatisation of the closure-based $\mathbf{DTL}$.
However, given that the tangled derivative is made trivial on scattered spaces, we conjecture that the trimodal $d$-logic based on this class has a natural, finite axiomatisation.
The work presented here is an important first step towards proving this.

\bibliographystyle{alphaurl}

\bibliography{bibliography}

\begin{thebibliography}{KKWZ06b}

\bibitem[ADN97]{artemov}
S.~Artemov, J.~Davoren, and A.~Nerode.
\newblock Modal logics and topological semantics for hybrid systems.
\newblock {\em Technical Report MSI 97-05}, 1997.

\bibitem[BdRV01]{black}
Patrick Blackburn, Maarten de~Rijke, and Yde Venema.
\newblock {\em Modal Logic}, volume~53 of {\em Cambridge Tracts in Theoretical
  Computer Scie}.
\newblock Cambridge University Press, Cambridge, 2001.

\bibitem[BK19]{brunton2019data}
Steven~L Brunton and J~Nathan Kutz.
\newblock {\em Data-driven science and engineering: Machine learning, dynamical
  systems, and control}.
\newblock Cambridge University Press, 2019.

\bibitem[Boo84]{boolo}
George Boolos.
\newblock The logic of provability.
\newblock {\em The American Mathematical Monthly}, 91(8):470--480, 1984.
\newblock \href
  {https://arxiv.org/abs/https://doi.org/10.1080/00029890.1984.11971467}
  {\path{arXiv:https://doi.org/10.1080/00029890.1984.11971467}}, \href
  {https://doi.org/10.1080/00029890.1984.11971467}
  {\path{doi:10.1080/00029890.1984.11971467}}.

\bibitem[Chu08]{chu2008linear}
Moody~T Chu.
\newblock Linear algebra algorithms as dynamical systems.
\newblock {\em Acta Numerica}, 17:1--86, 2008.

\bibitem[CZ97]{AlexCha}
Alexander~V. Chagrov and Michael Zakharyaschev.
\newblock {\em Modal Logic}, volume~35 of {\em Oxford logic guides}.
\newblock Oxford University Press, 1997.

\bibitem[Esa81]{esakia}
Leo Esakia.
\newblock Diagonal constructions, {L}{\"o}b’s formula and {C}antor’s
  scattered spaces.
\newblock {\em Studies in logic and semantics}, 132(3):128--143, 1981.

\bibitem[Esa01]{Esakia2}
L.~Esakia.
\newblock Weak transitivity--restitution.
\newblock In Nauka, editor, {\em Study in Logic}, volume~8, pages 244--254,
  2001.
\newblock In Russian.

\bibitem[Esa04]{EsakiaAlgebra}
Leo Esakia.
\newblock Intuitionistic logic and modality via topology.
\newblock {\em Ann. Pure Appl. Log.}, 127(1-3):155--170, 2004.
\newblock \href {https://doi.org/10.1016/j.apal.2003.11.013}
  {\path{doi:10.1016/j.apal.2003.11.013}}.

\bibitem[FD12]{david3}
David Fernández-Duque.
\newblock A sound and complete axiomatization for dynamic topological logic.
\newblock {\em The Journal of Symbolic Logic}, 77(3):947–969, 2012.
\newblock \href {https://doi.org/10.2178/jsl/1344862169}
  {\path{doi:10.2178/jsl/1344862169}}.

\bibitem[Fer11]{FernandezTangled}
David Fern{\'{a}}ndez{-}Duque.
\newblock Tangled modal logic for spatial reasoning.
\newblock In Toby Walsh, editor, {\em {IJCAI} 2011, Proceedings of the 22nd
  International Joint Conference on Artificial Intelligence, Barcelona,
  Catalonia, Spain, July 16-22, 2011}, pages 857--862. {IJCAI/AAAI}, 2011.
\newblock \href {https://doi.org/10.5591/978-1-57735-516-8/IJCAI11-149}
  {\path{doi:10.5591/978-1-57735-516-8/IJCAI11-149}}.

\bibitem[Fer14]{david}
D.~Fern{\'{a}}ndez{-}Duque.
\newblock Non-finite axiomatizability of dynamic topological logic.
\newblock {\em {ACM} Transactions on Computational Logic}, 15(1):4:1--4:18,
  2014.
\newblock URL: \url{http://doi.acm.org/10.1145/2489334}, \href
  {https://doi.org/10.1145/2489334} {\path{doi:10.1145/2489334}}.

\bibitem[FI18]{FernandezIliev}
David Fern{\'{a}}ndez{-}Duque and Petar Iliev.
\newblock Succinctness in subsystems of the spatial {\(\mu\)}-calculus.
\newblock {\em Journal of Applied Logics}, 5(4):827--874, 2018.

\bibitem[GH17]{Goldblatt2017Spatial}
R.~Goldblatt and I.~Hodkinson.
\newblock Spatial logic of tangled closure operators and modal mu-calculus.
\newblock {\em Annals of Pure and Applied Logic}, 168(5):1032 -- 1090, 2017.

\bibitem[HPS11]{hanrot2011analyzing}
Guillaume Hanrot, Xavier Pujol, and Damien Stehl{\'e}.
\newblock Analyzing blockwise lattice algorithms using dynamical systems.
\newblock In {\em Annual Cryptology Conference}, pages 447--464. Springer,
  2011.

\bibitem[KKWZ06a]{konev}
B.~Konev, R.~Kontchakov, F.~Wolter, and M.~Zakharyaschev.
\newblock Dynamic topological logics over spaces with continuous functions.
\newblock In G.~Governatori, I.~Hodkinson, and Y.~Venema, editors, {\em
  Advances in Modal Logic}, volume~6, pages 299--318, London, 2006. College
  Publications.

\bibitem[KKWZ06b]{KonevMetric}
Boris Konev, Roman Kontchakov, Frank Wolter, and Michael Zakharyaschev.
\newblock On dynamic topological and metric logics.
\newblock {\em Studia Logica}, 84(1):129--160, 2006.
\newblock \href {https://doi.org/10.1007/s11225-006-9005-x}
  {\path{doi:10.1007/s11225-006-9005-x}}.

\bibitem[KM05]{kremer}
P.~Kremer and G.~Mints.
\newblock Dynamic topological logic.
\newblock {\em Annals of Pure and Applied Logic}, 131:133--158, 2005.

\bibitem[Kud06]{Kudinov}
Andrey Kudinov.
\newblock Topological modal logics with difference modality.
\newblock In Guido Governatori, Ian~M. Hodkinson, and Yde Venema, editors, {\em
  Advances in Modal Logic 6}, pages 319--332. College Publications, 2006.
\newblock URL: \url{http://www.aiml.net/volumes/volume6/Kudinov.ps}.

\bibitem[LA14]{hybrid}
Hai Lin and Panos~J. Antsaklis.
\newblock Hybrid dynamical systems: An introduction to control and
  verification.
\newblock {\em Found. Trends Syst. Control}, 1(1):1–172, March 2014.
\newblock \href {https://doi.org/10.1561/2600000001}
  {\path{doi:10.1561/2600000001}}.

\bibitem[MR07]{sequential}
Henning~S. Mortveit and Christian~M. Reidys.
\newblock {\em An Introduction to Sequential Dynamical Systems}.
\newblock Springer-Verlag, Berlin, Heidelberg, 2007.

\bibitem[MT44]{Tarski}
J.C.C. McKinsey and A.~Tarski.
\newblock The algebra of topology.
\newblock {\em Annals of Mathematics}, 2:141--191, 1944.

\bibitem[Mun00]{Munkres}
James~R. Munkres.
\newblock {\em {Topology}}.
\newblock Prentice Hall, Inc., 2 edition, 2000.

\bibitem[Pnu77]{ltl}
A.~Pnueli.
\newblock The temporal logic of programs.
\newblock In {\em Proceedings 18th {I}{E}{E}{E} Symposium on the Foundations of
  {C}{S}}, pages 46--57, 1977.

\bibitem[Sim75]{simmons}
H.~Simmons.
\newblock Topological aspects of suitable theories.
\newblock {\em Proceedings of the Edinburgh Mathematical Society},
  19(4):383–391, 1975.
\newblock \href {https://doi.org/10.1017/S001309150001049X}
  {\path{doi:10.1017/S001309150001049X}}.

\bibitem[Smo85]{trans}
Craig Smorynski.
\newblock {\em Self-reference and modal logic}.
\newblock Springer, 1985.

\bibitem[Wei17]{weinan2017proposal}
E~Weinan.
\newblock A proposal on machine learning via dynamical systems.
\newblock {\em Communications in Mathematics and Statistics}, 5(1):1--11, 2017.

\end{thebibliography}


\begin{thebibliography}{}

\end{thebibliography}


\end{document}